\DeclareSymbolFontAlphabet{\mathbb}{AMSb}
\DeclareSymbolFontAlphabet{\mathbbl}{bbold}
\numberwithin{equation}{section}
\theoremstyle{plain}
\newtheorem{theorem}{Theorem}[section]
\newtheorem{lemma}[theorem]{Lemma}
\newtheorem{proposition}[theorem]{Proposition}
\newtheorem{corollary}[theorem]{Corollary}
\newtheorem{assumption}[theorem]{Assumption}
\newtheorem{conjecture}[theorem]{Conjecture}
\theoremstyle{definition}
\newtheorem{Fact}[theorem]{Fact}
\newtheorem{definition}[theorem]{Definition}
\newtheorem{remark}[theorem]{Remark}
\newcommand{\bbB}{\mathbb{B}}
\newcommand{\bbH}{\mathbb{H}}
\newcommand{\bbK}{\mathbb{K}}
\newcommand{\bbG}{\mathbb{G}}
\newcommand{\C}{\mathbb{C}}
\newcommand{\bbP}{\mathbb{P}}
\newcommand{\bbQ}{\mathbb{Q}}
\newcommand{\bbU}{\mathbb{U}}
\newcommand{\bbX}{\mathbb{X}}
\newcommand{\bbd}{\boldsymbol{d}}
\newcommand{\bbg}{\boldsymbol{g}}
\newcommand{\bbk}{\boldsymbol{k}}
\newcommand{\bbp}{\boldsymbol{p}}
\newcommand{\bbu}{\boldsymbol{u}}
\newcommand{\bbx}{\boldsymbol{x}}
\newcommand{\lie}[1]{\mathfrak{#1}}
\newcommand{\Lie}{\mathop\mathrm{Lie}\nolimits{}}
\newcommand{\bbPhi}{\mathbbl{\Phi}\hspace{-.53em}\mathbb{I}\hspace{.24em}}
\newcounter{thmenum}
\newenvironment{thmenumerate}{%
\begin{list}{$(\thethmenum)$}{%
\usecounter{thmenum}
\setlength{\labelsep}{.5em}
\setlength{\labelwidth}{-7pt}
\setlength{\topsep}{0pt}
\setlength{\partopsep}{0pt}
\setlength{\parsep}{0pt}
\setlength{\leftmargin}{3pt}
\setlength{\rightmargin}{0pt}
\setlength{\itemindent}{\leftmargin}
\setlength{\itemsep}{0pt}
}}
{\end{list}}
\newcommand{\mycomment}[1]{} 
\newcommand{\diag}{\qopname\relax o{diag}}
\newcommand{\rank}{\qopname\relax o{rank}}
\newcommand{\Stab}{\qopname\relax o{Stab}}
\newcommand{\id}{\qopname\relax o{id}}
\newcommand{\Aut}{\qopname\relax o{Aut}}
\newcommand{\proj}{\qopname\relax o{pr}}
\renewcommand{\Im}{\qopname\relax o{Im}}
\newcommand{\Image}{\qopname\relax o{Im}}
\newcommand{\closure}[1]{\overline{#1}}
\newcommand{\transpose}[1]{\,{}^t{#1}}
\newcommand{\restrict}{\big|}
\newcommand{\Sym}{\mathop{\mathrm{Sym}}\nolimits}
\newcommand{\orbit}{\mathbb{O}}
\newcommand{\calorbit}{\mathcal{O}}
\newcommand{\GL}{\mathrm{GL}}
\newcommand{\Sp}{\mathrm{Sp}}
\newcommand{\Mat}{\mathrm{M}}
\newcommand{\regMat}{\mathrm{M}^{\circ}}
\newcommand{\regCMat}{\mathscr{S}^{\circ}}
\newcommand{\Grass}{\qopname\relax o{Gr}}
\newcommand{\LGrass}{\qopname\relax o{LGr}}
\newcommand{\nilradical}[1]{\lie{u}_{#1}}
\newcommand{\vectwo}[2]{{\renewcommand{\arraystretch}{.85}\Bigl(\begin{array}{@{\,}c@{\,}}{#1}\\ {#2}\end{array}\Bigr)}}
\newcommand{\mattwo}[4]{\Bigl(\begin{array}{@{\,}c@{\;\;}c@{\,}}{#1} & {#2} \\ {#3} & {#4} \end{array}\Bigr)}
\newcommand{\matthree}[9]{\Biggl(\begin{array}{@{\,}c@{\;\;}c@{\;\;}c@{\,}}{#1}&{#2}&{#3}\\ {#4}&{#5}&{#6}\\ {#7}&{#8}&{#9}\\ \end{array}\Biggr)}
\newcommand{\eb}{\boldsymbol{e}}
\newcommand{\FlagB}{\mathscr{B}}
\newcommand{\STab}{\mathrm{STab}}
\newcommand{\nilpotents}{\mathcal{N}}
\newcommand{\nilpotentvark}{\nilpotents^{\theta}}
\newcommand{\nilpotentvars}{\nilpotents^{-\theta}}
\newcommand{\nilpotentsg}{\nilpotents_{\lie{g}}}
\newcommand{\Nilpotents}{\mathfrak{N}}
\newcommand{\Nilpotentvark}{\Nilpotents^{\theta}}
\newcommand{\Nilpotentvars}{\Nilpotents^{-\theta}}
\newcommand{\scrdblFV}{\mathscr{X}}
\newcommand{\dblFV}{X}
\newcommand{\bbdblFV}{\mathbb{X}}
\newcommand{\conormalvariety}{\mathcal{Z}}
\newcommand{\skipover}[1]{}
\newcommand{\shape}[1]{{\qopname\relax o{shape}}(#1)}
\newcommand{\RS}{\qopname\relax o{RS}}
\newcommand{\genRS}{\qopname\relax o{gRS}}
\newcommand{\RSl}{\RS_1}
\newcommand{\RSr}{\RS_2}
\newcommand{\Rect}{\qopname\relax o{Rect}}
\newcommand{\FlagVar}{\mathscr{F}\!\ell}
\newcommand{\FlQK}{\!\mathscr{Q}_K}
\newcommand{\FlPG}{\!\mathscr{P}_G}
\newcommand{\FlBG}{\!\mathscr{B}_G}
\newcommand{\Partition}{\mathscr{P}}
\newcommand{\partitionsof}[1]{\Partition({#1})}
\newcommand{\Orbit}{\mathfrak{O}}
\newlength{\dhatheight}
\newcommand{\projk}{\proj_{\lie{k}}}
\newcommand{\projs}{\proj_{\lie{s}}}
\newcommand{\regTnxTn}{(T_n^2)^{\circ}}
\newcommand{\regCnxCn}{(\mathscr{C}_n^2)^{\circ}}
\newcommand{\SYD}{\mathrm{YD}_{\pm}}
\newcommand{\SYDCI}{\mathrm{YD}_{\pm}^{\scriptscriptstyle\mathrm{CI}}}
\newcommand{\hatT}{\widehat{T}}
\newcommand{\bbBK}{\bbB_{\bbK}}
\newcommand{\smallplus}{\scriptscriptstyle{+}}
\newcommand{\smallminus}{\scriptscriptstyle{-}}
\begin{document}

\title[Orbit embedding for Steinberg maps]{Orbit embedding for double flag varieties and\\ Steinberg maps}

\author{Lucas Fresse}
\address{Universit\'e de Lorraine, CNRS, Institut \'Elie Cartan de Lorraine, UMR 7502, Van\-doeuvre-l\`es-Nancy, F-54506, France}
\curraddr{}
\email{lucas.fresse@univ-lorraine.fr}
\thanks{L. F. is supported in part by the ANR project GeoLie ANR-15-CE40-0012.}

\author{Kyo Nishiyama}
\address{Department of Mathematics, Aoyama Gakuin University, Fuchinobe 5-10-1, Chuo, Sagamihara 229-8558, Japan}
\curraddr{}
\email{kyo@gem.aoyama.ac.jp}
\thanks{K.~N.~is supported by JSPS KAKENHI Grant Number \#{16K05070}.}

\subjclass[2010]{14M15 (primary); 17B08, 53C35, 05A15 (secondary).}
\keywords{Steinberg variety; conormal bundle; exotic moment map; nilpotent orbits; double flag variety; Robinson-Schensted correspondence; partial permutations; orbit embedding}


\newcommand{\version}{Ver.~0.0}
\newcommand{\setversion}[1]{\renewcommand{\version}{Ver.~{#1}}}
\setversion{0.01 [2019/11/04 16:09:52 JST]}
\setversion{0.1 [2019/11/18 22:09:06 JST]}
\setversion{0.2 [2019/11/20 22:44:46 JST]}
\setversion{0.3 [2019/11/22 18:22:45 JST]}
\setversion{0.4 [2019/12/09]}
\setversion{0.5 [2019/12/13 00:00:37 JST]}
\setversion{0.6 [2019/12/22 10:35:37 JST]}
\setversion{0.7 [2019/12/24 14:44:59 JST]}
\setversion{1.0 [2019/12/28 16:14:50 JST]}
\setversion{1.1 [2020/05/31 10:56:34 JST]}
\setversion{2.0 [2020/06/04 18:44:22 JST]}
\setversion{3.0 [2020/09/04 10:53:43 JST]}


\begin{abstract}
In the first half of this article, 
we review the Steinberg theory for double flag varieties for symmetric pairs.  
For a special case of the symmetric space of type AIII, 
we will consider $ \dblFV = \GL_{2n}/P_{(n,n)} \times \GL_n / B_n^+ \times \GL_n / B_n^- $ on which 
$ K = \GL_n \times \GL_n $ acts diagonally.  
We give a classification of $ K $-orbits in $ \dblFV $, and 
explicit combinatorial description of the Steinberg maps.

In the latter half, we develop the theory of embedding of a double flag variety into a larger one.
This embedding is a powerful tool to study different types of double flag varieties in terms of the known ones.  
We prove an embedding theorem of orbits in full generality 
and give an example of type CI which is embedded into type AIII.
\end{abstract}

\maketitle


\section*{Introduction}

Various combinatorial structures play important roles in representation theory.  
For example, the set of all equivalence classes of irreducible representations of 
the symmetric group $ S_n $ of order $ n $ is classified by 
the set of partitions $ \partitionsof{n} $ of $ n $.  
If an irreducible representation $ \sigma $ of $ S_n $ is corresponding to a partition $ \lambda \in \partitionsof{n} $, 
then the set of standard tableaux $ \STab_{\lambda} $ gives a basis of the representation space of $ \sigma $.  
This is known as the Specht theory.  
Partitions are also interpreted as highest weights of finite dimensional irreducible representations of $ \GL_n $ 
and semistandard tableaux give a basis of an irreducible representation.  

In this respect, the geometry of flag varieties also interacts with combinatorics and 
representation theory.  
So let $ G $ be a reductive algebraic group over the complex number field, 
$ B\subset G $ a Borel subgroup, and consider the full flag variety $ G/ B $.  
Then a symmetric subgroup $ K $ of $ G $ which is fixed by an involution $ \theta $ acts on 
$ G/B $ with finitely many orbits and, together with the information of local systems, 
they actually classify irreducible Harish-Chandra $ (\lie{g}, K) $-modules 
with the trivial infinitesimal character in terms of $ D $-modules 
(\cite{Beilinson.Bernstein.1981}, see also \cite{Milicic.1993}).  
The combinatorics of the $ K $-orbits in $ G/B $ 
and their closure relations is deeply related to
the category of Harish-Chandra modules (see \cite{McGovern.1998,Lusztig.Vogan.1983}, e.g.).  

Let us write $ \FlagB = G/B $ for shorthand.
In much earlier time, Springer noticed that 
the cotangent bundle $ T^* \FlagB $ gives a resolution of singularity 
of the nilpotent variety $ \nilpotentsg $ consisting of the nilpotent elements in $ \lie{g} $ 
(\cite{Springer.1969}).  
He constructed irreducible representations of the Weyl group $ W $ on 
the top-degree cohomology space of the so-called Springer fiber $ \FlagB_x $ associated 
to a nilpotent element $ x \in \nilpotentsg $, 
and in that way 
he created a correspondence between irreducible representations of 
$ W $ and the set of nilpotent orbits $ \nilpotentsg / G $ together with their local systems, which is finite in number 
(\cite{Springer.1978}, see also \cite{Chriss.Ginzburg.1997}).  
This is a remarkable breakthrough that provides an amount of works on 
combinatorics related to the geometry of flag varieties as well as the nilpotent varieties 
(see, e.g., \cite{Kazhdan.Lusztig.1979,Richardson.Springer.1993} and \cite{Trapa.2005,NTW.2016}).  

Approximately in the same period, 
Steinberg introduced a variety, now called the Steinberg variety 
(\cite{Steinberg.1976}).  
He used this variety to study the Springer resolution more deeply.  
The resolution 
$ T^* \FlagB \to \nilpotentsg $ actually comes as 
a moment map arising from the Hamiltonian action of $ G $ on the symplectic variety $ T^* \FlagB $
(see \cite{Chriss.Ginzburg.1997,Douglass.Roehrle.2009}).  
Let us consider the product of the flag variety $ \scrdblFV := \FlagB \times \FlagB $ on which 
$ G $ acts diagonally.  
It is traditional to study a symplectic reduction in the study of Hamiltonian actions on a symplectic variety, and the Steinberg variety is obtained from this recipe.  
It arises as the null fiber of the moment map $ \mu_\scrdblFV : T^* \scrdblFV \to \lie{g}^* $, and is 
denoted by $ \conormalvariety_\scrdblFV = \mu_\scrdblFV^{-1}(0) $.  
The Steinberg variety can also be interpreted as the fiber product over the resolution maps, 
which is exhibited below.
\begin{equation*} 
\xymatrix @R+1ex @M+.5ex @C-2ex @L+.5ex {
\conormalvariety_\scrdblFV {=} T^*\FlagB {\times_{\nilpotentsg}} T^*\FlagB \ar[d]^-{p_1} \ar[rr]^-{p_2}  \ar@{->}[drr]|-{\;\varphi\;}
& & T^*\FlagB \ar[d]^{\mu_{\FlagB}} \\
T^*\FlagB \ar[rr]_-{\mu_{\FlagB}} & & \nilpotentsg
}
\end{equation*}
The variety $ \conormalvariety_\scrdblFV $ is highly reducible of equi-dimension, and 
its irreducible components are parametrized by the Weyl group $ W $, or strictly speaking 
by the $ G $-orbits $ \{ \orbit_w \mid w \in W \} $ in $ \scrdblFV $.  
The image of an irreducible component by the map $ \varphi $, which is the composite of 
the first projection to $ T^* \FlagB $ and then by the moment map 
$ \mu_{\FlagB} : T^* \FlagB \to \nilpotentsg $, is the closure of a nilpotent orbit.  
\begin{equation*}
\xymatrix @R+1ex @M+.5ex @C-2ex @L+.5ex {
& T^*\FlagB {\times_{\nilpotentsg}} T^*\FlagB \ar[ld]_-{\pi} \ar[dr]^-{\;\varphi\;} \\
\makebox[3ex][c]{$\FlagB{\times}\FlagB \supset \orbit_w $} & & \makebox[3ex][c]{$\nilpotentsg \supset \calorbit $}
} 
\end{equation*}
Thus we get a combinatorial map 
$ {\Phi} : W \simeq (\FlagB{\times}\FlagB)/G \to \nilpotentsg/ G $ 
via
$ \varphi\bigl( \closure{\pi^{-1}(\orbit_w)} \bigr) = \closure{\calorbit} \; ( w \in W,\, \calorbit \in \nilpotentsg / G) $.  
This provides a rich theory involving geometry, combinatorics and representation theory.  
In fact, it turns out that $ \conormalvariety_\scrdblFV $ bears the regular representation of the Weyl group 
which ``doubles'' the Springer representations.

We started a study which generalizes the above mentioned Steinberg theory 
to the case of symmetric pairs in \cite{Fresse.N.2016,Fresse.N.2020}.  
It is related to the triple flag varieties \cite{MWZ.1999,MWZ.2000,FGT.2009,Travkin.2009}  
as well as the double flag varieties for symmetric pairs \cite{NO.2011,HNOO.2013} of finite type.  
In fact, our study is motivated by the paper by Henderson and Trapa \cite{Henderson.Trapa.2012},  
although the role of the double flag variety is implicit in their paper.

Let $ P $ and $ Q$ be parabolic subgroups of $ G $ and $ K $, respectively.  
Then we call $ \dblFV := G/P \times K/Q $ a \emph{double flag variety} of the symmetric pair $ (G, K) $.  
The symmetric subgroup $ K $ acts on $ \dblFV $ diagonally.  
For this double flag variety, we define a conormal variety 
$ \conormalvariety_{\dblFV} $, 
which is a direct generalization of the Steinberg variety.  
If there are only finitely many $ K $-orbits on $ \dblFV $, 
basically we can play the same game as above, and obtain 
\emph{two} different combinatorial maps 
$ \Phi^{\pm \theta} : \dblFV / K \to \nilpotents^{\pm \theta} / K $, 
where $ \nilpotentvark = \nilpotentsg \cap \lie{k} $ is the nilpotent variety of $ \lie{k} $, 
and $ \nilpotentvars $ is the nilpotent variety in the (co)tangent space of $ G/K $ (see below for the precise definition).  

We call the maps $ \Phi^{\pm \theta} $ defined above 
\emph{Steinberg maps} associated to $ \dblFV $.  
If we need to distinguish $ \Phi^\theta $ and $ \Phi^{-\theta} $, 
the map $ \Phi^{\theta} $ is called a \emph{generalized} Steinberg map and 
$ \Phi^{-\theta} $ an \emph{exotic} one.

In the case of the double flag varieties, it is not obvious if there is a representation theoretic structure on 
$ \conormalvariety_X $.  
However, a na\"{i}ve picture of convolutions (cf.~\cite[\S 2.7]{Chriss.Ginzburg.1997}) gives us an insight that hopefully 
we can construct Hecke algebra actions of $ \mathcal{H}(G/B) $ on the left and 
those of $ \mathcal{H}(K/B_K) $ on the right and which make the top Borel-Moore homology space into 
a Hecke algebra bi-module.  
See \cite{Travkin.2009} and also \cite[Conjecture 7.11]{Fresse.N.2020}.  

Let us consider an example of a symmetric pair of type AIII, namely 
$ (G, K) = (\GL_{2n}, \linebreak[3] \GL_n \times \GL_n) $.  
We consider everything over $ \C $ and basically omit the letter $ \C $.  
Let $ V = \C^{2n} $ and fix
a standard polar decomposition $ V = V^+ \oplus V^- $, 
where $ \dim V^{\pm} = n $ and 
$ V^+ = \langle \eb_i \mid 1 \leq i \leq n \rangle $,  
$ V^- = \langle \eb_i \mid n + 1 \leq i \leq 2 n \rangle $.  
Then $ K $ is the stabilizer of the polarization.  
Consider a maximal parabolic subgroup $ P $ in $ G $ which stabilizes $ V^+ $ so that 
$ G/P \simeq \Grass_n(V) $, the Grassmannian of $ n $-spaces in $ V $.  
Also we choose a Borel subgroup of $ K = \GL_n \times \GL_n $ as 
$ B_K = B_n^+ \times B_n^- $, where 
$ B_n^+ $ denotes the Borel subgroup of $ \GL_n $ consisting of upper triangular matrices and 
$ B_n^- $ its opposite as usual.  
In this way, our double flag variety is 
\begin{equation*}
X = G/P \times K/B_K \simeq \Grass_n(V) \times \FlagVar_n^+ \times \FlagVar_n^- ,
\end{equation*}
on which $ K = \GL_n^2 $ acts.  Here $ \FlagVar_n^{\pm} $ is the set of complete flags of subspaces in $ V^{\pm} $.
We proved 

\begin{theorem}[\cite{Fresse.N.2020}, see Theorem~\ref{thm:orbits.in.dblFV.typeAIII} below]
There are finitely many $ K $-orbits in $ \dblFV $ and 
they are parametrized by pairs of partial permutations of rank $ n $.  
Namely, we have 
\begin{equation*}
X/K \simeq \regTnxTn / S_n , \qquad
\regTnxTn := \Big\{ 
\omega = \vectwo{\tau_1}{\tau_2} \Bigm| \tau_1, \tau_2 \in T_n , \rank \omega = n \Big\} , 
\end{equation*}
where $ T_n $ denotes the set of partial permutation matrices of size $ n $.
\end{theorem}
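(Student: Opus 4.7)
The plan is to reduce the $K$-orbit classification on $\dblFV$ to a normal-form problem for pairs of $n \times n$ matrices. First, since $K$ acts diagonally on $\dblFV = G/P \times K/B_K$ and transitively on the factor $K/B_K$, the standard bijection gives $\dblFV/K \simeq B_K \backslash G/P$, where $B_K = B_n^+ \times B_n^-$ is embedded block-diagonally in $G = \GL_{2n}$. Identifying $G/P \simeq \Grass_n(V)$, one realizes each $W$ as the column span of a $2n \times n$ matrix $M = \vectwo{A}{B}$ of full column rank, with $A, B \in \Mat_n$; the ambiguity in the choice of basis of $W$ is a right $\GL_n$-action, and the left $B_K$-action is block-diagonal. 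Hence the orbit set becomes
$$\dblFV/K \simeq \Omega / (B_n^+ \times B_n^- \times \GL_n), \qquad \Omega = \{(A,B) \in \Mat_n \times \Mat_n : \rank \vectwo{A}{B} = n\},$$
with $(b_1, b_2, g) \cdot (A, B) = (b_1 A g,\, b_2 B g)$.

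The crux is to build a canonical form on $\Omega$. I would perform a column-by-column reduction on $M$ whose output is a pair of partial permutations. Processing the $2n$ rows of $M$ from top to bottom, each row either becomes a pivot row (supporting a single $1$ in some column of $M$) or is cleared against previously identified pivots. Right multiplication by $\GL_n$ clears entries to the right of a pivot inside the pivot row; left multiplication by $B_n^+$ (resp.\ $B_n^-$) clears entries above the pivot within the top block (resp.\ below the pivot within the bottom block); and the tori of $B_n^\pm$ rescale pivot entries to $1$. Since $\rank M = n$, exactly $n$ pivots appear and the resulting $M$ has the form $\vectwo{\tau_1}{\tau_2}$ with $\tau_1, \tau_2 \in T_n$. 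For uniqueness, if two normal forms are related by $(b_1 \tau_1 g, b_2 \tau_2 g) = (\tau_1', \tau_2')$, then comparing pivot positions forces $g$ to be a permutation matrix (up to a diagonal scaling absorbed into the tori), so $(\tau_1', \tau_2') = (\tau_1 \sigma, \tau_2 \sigma)$ for some $\sigma \in S_n$. This yields the claimed bijection $\dblFV/K \simeq \regTnxTn/S_n$.

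The hard part will be the canonical-form construction: the three actions---left $B_n^+$, left $B_n^-$, and right $\GL_n$---must be coordinated so that after normalizing the top block $A$ to $\tau_1$, the residual freedom on the right (a subgroup of $\GL_n$ attached to the pivot pattern of $\tau_1$, modulo the torus of $B_n^+$) is still large enough, together with the left $B_n^-$-action, to reduce $B$ to a partial permutation $\tau_2$ while keeping $\tau_1$ fixed. Carefully tracking this residual subgroup, and using the rank-$n$ condition to ensure that $n$ pivots appear in total across the two blocks, forms the central combinatorial input of the argument.
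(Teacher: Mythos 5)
Your opening reduction is fine and is exactly the chain of identifications the paper records: $\dblFV/K\simeq \Grass_n(\C^{2n})/(B_n^+\times B_n^-)\simeq B_n^+\times B_n^-\backslash\regMat_{2n,n}/\GL_n$ (cf.\ Theorem~\ref{thm:orbits.in.dblFV.typeAIII} and \eqref{eqn:bbdblFV.bbK.various.forms.typeAIII}). But note that the present paper does not reprove the classification at all --- it quotes it from \cite[Theorem~8.1]{Fresse.N.2020} --- so the entire content of the statement is your normal-form lemma, and that is precisely where the proposal has gaps. On existence: left multiplication by $B_n^+$ only adds \emph{lower} rows of the top block to \emph{upper} ones, and $B_n^-$ only adds upper rows of the bottom block to lower ones; so in your top-to-bottom sweep a later row of the top block cannot be ``cleared against previously identified pivots'' by row operations, and clearing it by column operations disturbs the already-normalized pivot rows in \emph{both} blocks simultaneously, since the right $\GL_n$ acts on the two blocks at once. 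That simultaneity is the actual difficulty, and you explicitly defer it (``the hard part''), so existence of the normal form is not established. (Also, ``exactly $n$ pivots'' is not right: a column of $\vectwo{\tau_1}{\tau_2}$ may carry a $1$ in both blocks, so the number of unit entries ranges from $n$ to $2n$; only the number of nonzero columns is $n$.)

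On uniqueness, the asserted mechanism is false: $(b_1\tau_1 g,\,b_2\tau_2 g)=(\tau_1',\tau_2')$ does \emph{not} force $g$ to be monomial. For $n=2$ take $\tau_1=\mattwo{1}{0}{0}{0}$, $\tau_2=\mattwo{1}{0}{0}{1}$ and $g=\mattwo{1}{0}{c}{1}$ with $c\neq0$; then $\tau_1 g=\tau_1$ and $b_2\tau_2 g=\tau_2$ with $b_2=\mattwo{1}{0}{-c}{1}\in B_2^-$, so a non-monomial $g$ relates two normal forms. Hence ``comparing pivot positions'' does not prove injectivity of $\omega S_n\mapsto$ orbit, and without it you have not excluded that two distinct classes in $\regTnxTn/S_n$ lie in one $K$-orbit. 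A genuine argument is needed, for instance via complete orbit invariants such as the ranks $r_{ij}(\omega)=\rank$ of the submatrix formed by rows $i,\dots,n$ of the top block together with rows $1,\dots,j$ of the bottom block ($1\le i\le n+1$, $0\le j\le n$): these are constant under $B_n^+\times B_n^-\times\GL_n$ and separate the classes $\omega S_n$. As it stands, both halves of the key lemma (existence and uniqueness of the normal form) are missing, so the proposal is a correct setup but not a proof of the theorem.
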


It is well known that the nilpotent orbits in
$ \nilpotentvark / K $ are parametrized by pairs of partitions $ (\lambda, \mu) \in \partitionsof{n}^2 $, 
and those in 
$ \nilpotentvars / K $ are parametrized by signed Young diagrams $ \Lambda \in \SYD(n,n) $ of signature $ (n, n) $.  
See \S~\ref{subsec:symmetric.pair.typeAIII} below for details.  
So we obtain a combinatorial map 
\begin{equation*}
\Phi^{\pm \theta} : 
\regTnxTn /S_n \simeq X/K \xrightarrow{\qquad} \nilpotents^{\pm \theta}/K \simeq \begin{cases}
\partitionsof{n}^2 & \text{ for $ \nilpotentvark $}, 
\\
\SYD(n,n) & \text{ for $ \nilpotentvars $}.
\end{cases}
\end{equation*}

In \cite{Fresse.N.2020}, we gave partial results which give 
explicit and efficient algorithms for computing the Steinberg maps $ \Phi^{\pm \theta} $ 
for $ \omega = \vectwo{\tau_1}{1_n} $, i.e., one of the partial permutation is really a permutation.  
However, now we have a complete algorithm for all of $ \regTnxTn /S_n $ (unpublished, in preparation).
In the present paper, we give this complete algorithm without proof for the map $ \Phi^{\theta} $.  
Proofs and the algorithm for $ \Phi^{-\theta} $ will appear elsewhere soon. 
Thus we only claim it in an abstract manner here. 

\begin{theorem}[Theorems~\ref{thm:existence.of.algorithm} and \ref{thm:gen.Steinberg.map}]
There exist efficient combinatorial algorithms which describe the maps 
$ \Phi^{\pm \theta} $.  
\end{theorem}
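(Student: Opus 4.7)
The plan is to construct the algorithms by combining an explicit parametrization of conormal fibers with a Robinson--Schensted-type insertion procedure. Since the $K$-orbits in $\dblFV$ are already classified by $\regTnxTn/S_n$, the task reduces to describing, for each class $\omega = \vectwo{\tau_1}{\tau_2}$, the dense $K$-orbit in $\varphi^{\pm\theta}\bigl(\closure{\pi^{-1}(\Korbit_\omega)}\bigr)\subset \nilpotents^{\pm\theta}$, where $\pi$ denotes projection of the conormal variety $\conormalvariety_\dblFV$ to $\dblFV$ and $\varphi^{\pm\theta}$ is the exotic moment map to $\nilpotentvark$ or $\nilpotentvars$.

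First I would fix canonical representatives $(W,\Flag^+,\Flag^-) \in \Grass_n(V)\times\FlagVar_n^+\times\FlagVar_n^-$ in normal form according to the combinatorics of $\omega$, so that the stabilizer in $K$ is as explicit as possible. Second, I would compute the conormal fiber at this representative as an affine subspace of $\Hom$-spaces whose elements are the nilpotent endomorphisms $x \in \lie{g}$ that annihilate the tangent directions of the three factors; this produces a concrete family of nilpotents indexed linearly by the conormal parameters. Third, I would project this family componentwise onto $\lie{k}$ (resp.\ $\lie{s}$) and read off the generic $K$-conjugacy class. The combinatorial output is then the datum of the Jordan shape on $V^+$ and $V^-$ separately (for $\Phi^{\theta}$), or of the signed Young diagram recording how a generic nilpotent in $\lie{s}$ interlaces the polarization $V = V^+\oplus V^-$ (for $\Phi^{-\theta}$).

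I expect the resulting recipe to take the form of an iterated RSK-type insertion on the pair $(\tau_1,\tau_2)$ of partial permutations: for $\Phi^{\theta}$, applying $\RSK$ separately to $\tau_1$ and $\tau_2$ produces the pair of partitions in $\partitionsof{n}^2$, with the Grassmannian factor enforcing that the two outputs share the common rank $n$; for $\Phi^{-\theta}$, a bookkeeping device $\genRS$ that colors the inserted cells by the factor of $V^{\pm}$ from which they originate should yield the required element of $\SYD(n,n)$. The verification that these combinatorial outputs match the geometric ones is reduced to checking that the algorithm is compatible with the known cases $\omega = \vectwo{\tau_1}{1_n}$ already treated in \cite{Fresse.N.2020}, and with a minimal set of degenerations of one partial permutation into a lower-rank one.

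The main obstacle will be the case of $\Phi^{-\theta}$. The target $\SYD(n,n)$ is combinatorially richer than $\partitionsof{n}^2$, and a generic nilpotent in $\nilpotentvars$ mixes $V^+$ and $V^-$ in a way that depends sensitively on the interaction of $\tau_1$ and $\tau_2$ with the chosen $W$. Proving correctness will require a careful tracking of $K$-invariants of nilpotent elements in $\lie{s}$ (such as the dimensions of $\ker(x^k) \cap V^\pm$ for iterates of $x$) and, most plausibly, an induction that mirrors the recursive structure of partial permutations under removal of a rook. This obstacle is the reason we only state the existence assertion here and defer the explicit $\Phi^{-\theta}$ algorithm, together with its proof, to a forthcoming paper.
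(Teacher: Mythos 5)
You should note at the outset that the paper does not actually prove this statement: Theorem~\ref{thm:existence.of.algorithm} is asserted ``in an abstract manner,'' the explicit algorithm for $\Phi^{\theta}$ is recorded in Theorem~\ref{thm:gen.Steinberg.map} without proof, the only proved cases are $\omega = \vectwo{\tau_1}{1_n}$ (Theorems 9.1 and 10.4 of \cite{Fresse.N.2020}), and both the general proof and the $\Phi^{-\theta}$ algorithm are deferred to a forthcoming paper. So your proposal cannot be measured against a proof in this paper; it has to be judged on whether it would itself establish the claim, and against the explicit algorithm that the theorem refers to. Your geometric outline (normal-form representatives, an explicit description of the conormal fiber, generic projection to $\lie{k}$ or $\lie{s}$, and rank/kernel invariants to read off the Jordan type or the signed Young diagram) is indeed the natural strategy and is consistent with how the special cases were treated in \cite{Fresse.N.2020}; but as written it stops exactly where the real work begins, and its correctness check --- ``compatibility with the known cases $\vectwo{\tau_1}{1_n}$ and a minimal set of degenerations'' --- is not an argument: nothing is offered to show that those special values and rook-removal degenerations determine the map on all of $\regTnxTn/S_n$, and this is precisely the content the authors say requires a separate publication.

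More concretely, your one combinatorial prediction for $\Phi^{\theta}$ is wrong. Applying $\RSK$ separately to $\tau_1$ and $\tau_2$ produces partitions of $\rank\tau_1 = r+q$ and $\rank\tau_2 = r+p$, not elements of $\partitionsof{n}^2$, and the vague appeal to ``the Grassmannian factor enforcing the common rank $n$'' does not repair this. The actual algorithm (Theorem~\ref{thm:gen.Steinberg.map}) is genuinely joint in $\tau_1$ and $\tau_2$: from the normal form of Lemma~\ref{lemma:representatives.regTnxTn} one extracts the bijection $\sigma : J \to I$ carried by the common support of the two partial permutations, applies the classical Robinson--Schensted algorithm to $\sigma$, and then each output shape is obtained by transposing one of the two RS tableaux and rectifying it against the column tableaux $[L'],[L]$ (respectively $[M'],[M]$), so that $\lambda$ and $\mu$ each depend on both $\tau_1$ and $\tau_2$ --- information that componentwise $\RSK$ cannot see. (Your sketch for $\Phi^{-\theta}$ is likewise speculative, and it misuses $\genRS$, which in the paper is the orbit parametrization, not a signed-insertion device; but since the paper itself defers that algorithm, the decisive gap is the incorrect recipe for $\Phi^{\theta}$ together with the absence of any mechanism to verify a proposed algorithm beyond the already-known special cases.)
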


In the course of proofs, we obtain a generalization of 
Robinson-Schensted correspondence 
for the pairs of partial permutations of full rank 
(see Theorem~\ref{thm:gen.Steinberg.map}).  
This correspondence is also interesting in itself, and we noticed\footnote{
Actually this was pointed out to the authors by Anthony Henderson (private communication).  
We thank him for that.}
that there is a strong resemblance of the parameter sets of 
$ K $-orbits in $ X $ and 
those for the Travkin's mirabolic triple flag variety (\cite{Travkin.2009}).  
It seems that they are also related to the parameter sets of 
Achar-Henderson's enhanced nilpotent orbits (\cite{Achar.Henderson.2008}) and 
Kato's exotic nilpotent orbits (\cite{Kato.2009}).  
However, up to now, we cannot make out any rigorous geometric interpretations for them.  

There are many other interesting double flag varieties which admit a finite number of $ K $-orbits 
(see \cite{NO.2011,HNOO.2013}).   
However, no unified way to get an explicit and efficient algorithm for 
the above mentioned Steinberg maps $ \Phi^{\pm \theta} $ is known up to now.  
Even for giving parametrizations of $ K $-orbits in $ \dblFV $, there is no rigorous theory.

In this paper, we propose a technique by which 
we can embed a double flag variety into a larger double flag variety 
preserving orbit structures.  
Thus, if we know the Steinberg theory for a larger double flag variety, we might deduce 
the theory for a smaller one.  

In fact, nilpotent orbits of classical Lie algebras are classified in that way.  
Namely, we first classify the nilpotent orbits in type A, which amounts to establish the theory of 
Jordan normal form, and we get partitions.  
For type B, C and D, we embed them as Lie subalgebras of type A.   
Then nilpotent orbits of these Lie subalgebras can be obtained 
as non-empty intersection of the ones in type A and these subalgebras themselves.  
So we can use partitions of special shapes as a parameter set of nilpotent orbits.

In Section~\ref{sec:abstract.embedding.theory}, we discuss 
the embedding of orbits in $ \dblFV $ into a larger double flag variety $ \bbdblFV $.  
The key idea is to use two commuting involutions $ \sigma, \theta $ and 
existence of square roots in the direction of $ G/K $.  
This idea is originally developed by Takuya Ohta \cite{Ohta.2008} for linear actions, 
and later for arbitrary actions in \cite{Nishiyama.2014}.  

We give a full general theory for embedding in Section~\ref{sec:abstract.embedding.theory} 
without assuming the finiteness of orbits.  
Then, in Section~\ref{section:embedding.typeCI.into.typeAIII},  we give an example of type CI embedded into type AIII discussed above.  

Let us briefly summarize the main results here.  
Thus we will consider a larger connected reductive algebraic group $ \bbG $ and 
two commuting involutions $ \sigma, \theta $ of $ \bbG $.  
Define $ \bbK = \bbG^{\theta} $, the fixed point subgroup of $ \theta $, and similarly 
$ G = \bbG^{\sigma} $, $ K = \bbK^{\sigma} = G^{\theta} $.  
We assume that all these groups are connected.  
Take parabolic subgroups $ P \subset G $ and $ Q \subset K $.  
Then there exist $ \sigma $-stable parabolic subgroups $ \bbP \subset \bbG $ and 
$ \bbQ \subset \bbK $ which cut out $ P $ and $ Q $ from $ G $ and $ K $ respectively.  

Take a $ \sigma $-stable subgroup $ \bbH $ of $ \bbG $ and 
denote $ \bbH^{-\sigma} = \{ h \in \bbH \mid \sigma(h) = h^{-1} \} $.  
We say that \emph{$ \bbH $ admits $ (-\sigma) $-square roots} if for any $ h \in \bbH^{-\sigma} $ 
there exists an $ f \in \bbH^{-\sigma} $ such that $ h = f^2 $.
We consider the following conditions.  
\begin{itemize}
\item[(A)] 
$ \bbP $ and $ \bbQ $ admit $ (-\sigma) $-square roots.

\item[(B)]
For any $ \sigma $-stable parabolic subgroups $ \bbP_1 \subset \bbG $ and 
$ \bbQ_1 \subset \bbK $ which are conjugate to $ \bbP $ and $ \bbQ $ respectively, 
the intersection $ \bbP_1 \cap \bbQ_1 $ admits $ (-\sigma) $-square roots.
\end{itemize}

\begin{theorem}[Theorem~\ref{thm:embedding.orbits}]
In the above setting, 
let us consider the double flag varieties 
$ \dblFV := G/P \times K/Q $ and $ \bbdblFV := \bbG / \bbP \times \bbK / \bbQ $.  
If the parabolic subgroups $ \bbP $ and $ \bbQ $ satisfy the conditions {\upshape(A)} and {\upshape(B)}, then
there exists a natural embedding $ \dblFV \hookrightarrow \bbdblFV $ which respects the involution $ \sigma $, 
and the natural orbit map 
$ \iota : \dblFV / K \to \bbdblFV / \bbK $ defined by 
$ \iota(O) = \bbK \cdot O $  for $ O \in \dblFV / K $ is injective, 
i.e., for any $ \bbK $-orbit $ \orbit $ in $ \bbdblFV $, 
the intersection $ \orbit \cap \dblFV $ is either empty or a single $ K $-orbit.
\end{theorem}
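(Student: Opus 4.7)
The proof splits into two tasks: constructing the embedding $\dblFV \hookrightarrow \bbdblFV$ (for which condition (A) is responsible) and proving injectivity of $\iota$ (driven by condition (B)). For the embedding, the natural candidate is $(gP, kQ) \mapsto (g\bbP, k\bbQ)$. The nontrivial point, which I would extract from condition (A) via the Ohta-type machinery \cite{Ohta.2008} already referenced in the excerpt, is a $\sigma$-Iwasawa decomposition $\bbP = (\bbP \cap G)\cdot \bbP^{-\sigma}$ together with its analogue for $\bbQ$. This identifies $\bbP \cap G$ as the chosen parabolic $P$ (and likewise $\bbQ \cap K = Q$), pins down $\dblFV$ as a closed $\sigma$-stable subvariety of $\bbdblFV$, and makes the embedding $\sigma$-equivariant. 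I expect this step, rather than the orbit argument below, to be the main technical obstacle; everything else flows from it in a fairly mechanical way.

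For the injectivity of $\iota$, suppose $x_1, x_2 \in \dblFV$ lie in the same $\bbK$-orbit of $\bbdblFV$ and choose $\bbk \in \bbK$ with $\bbk \cdot x_1 = x_2$. Write $x_i = (g_i \bbP, k_i \bbQ)$ with $g_i \in G = \bbG^\sigma$ and $k_i \in K = \bbK^\sigma$, so there are $p \in \bbP$ and $q \in \bbQ$ with $\bbk = g_2 p g_1^{-1} = k_2 q k_1^{-1}$. Using $\sigma(g_i) = g_i$ and $\sigma(k_i) = k_i$, a direct computation yields
\[
h := \bbk^{-1} \sigma(\bbk) = g_1 \bigl(p^{-1}\sigma(p)\bigr) g_1^{-1} = k_1 \bigl(q^{-1}\sigma(q)\bigr) k_1^{-1}.
\]
Hence $h \in \bbK^{-\sigma}$ sits in the intersection $g_1\bbP g_1^{-1} \cap k_1 \bbQ k_1^{-1}$. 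Because $g_1, k_1$ are $\sigma$-fixed, this is an intersection of $\sigma$-stable parabolics conjugate to $\bbP$ and $\bbQ$, which is exactly the situation to which condition (B) applies.

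Condition (B) now produces $f$ in the same intersection with $\sigma(f)=f^{-1}$ and $f^2 = h$, and I would set $k := \bbk f$. A short check, using $\sigma(\bbk) = \bbk h$ and $\sigma(f) = f^{-1}$, gives $\sigma(k) = \bbk h f^{-1} = \bbk f = k$, so $k \in \bbK^\sigma = K$. The inclusion $f \in g_1 \bbP g_1^{-1}$ yields $k \cdot (g_1 \bbP) = \bbk \cdot (g_1 \bbP) = g_2 \bbP$; since $kg_1$ and $g_2$ both lie in $G$ and $P = G \cap \bbP$ by the first step, this descends to $kg_1 P = g_2 P$ in $G/P$. The same argument on the $K/Q$ factor, using $f \in k_1 \bbQ k_1^{-1}$ and $Q = K \cap \bbQ$, gives $k k_1 Q = k_2 Q$. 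Hence $k \cdot x_1 = x_2$ already in $\dblFV$, so $K \cdot x_1 = K \cdot x_2$, proving that $\iota$ is injective. In short, condition (B) is engineered precisely to supply the correction factor $f$ needed to trade the outer element $\bbk \in \bbK$ for a genuine $k \in K$, while preserving its action on the pair $x_1$.
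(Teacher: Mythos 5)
Your proof is correct and takes essentially the same route as the paper: the embedding and the identity $\dblFV = \bbdblFV^{\sigma}$ come from condition (A) via the square-root argument (the paper's preliminary Lemma), and the orbit-injectivity argument you spell out is exactly the mechanism of \cite[Theorem 3.2]{Nishiyama.2014}, which the paper cites without reproducing. A small clarification: what condition (A) is really used for is not the decomposition $\bbP=(\bbP\cap G)\cdot\bbP^{-\sigma}$ (that $\bbP\cap G=P$ is part of the setup, not a consequence) but rather $(\bbG/\bbP)^{\sigma}=G/P$, which needs a $(-\sigma)$-square root of $\bbg^{-1}\sigma(\bbg)\in\bbP^{-\sigma}$ for $\bbg$ ranging over all of $\bbG$, not just over $\bbP$, and likewise for $\bbQ$.
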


Under the same assumption, we prove a theorem on conormal varieties, 
which essentially says that 
there exists an embedding of irreducible components of 
$ \conormalvariety_{\dblFV} $ into those of $ \conormalvariety_{\bbdblFV} $.  
See Theorem~\ref{thm:embedding.conormal.bundles}.

In Section~\ref{section:embedding.typeCI.into.typeAIII}, 
we consider 
$ (\bbG, \bbK) = (\GL_{2n}, \GL_n \times \GL_n) $ of type AIII and 
take $ (G, K) = (\Sp_{2n}, \GL_n) $.  
For a Siegel parabolic subgroup $ P = P_S $ in $ G $ and 
the standard upper triangular Borel subgroup $ B_K = B_n^+ $ in $ K $, 
we can choose $ \sigma $-stable parabolic subgroups $ \bbP $ and $ \bbBK $ 
which satisfy the assumptions (A) and (B).  
Thus the embedding theorem holds, and we get an explicit parametrization of 
$ K $-orbits in $ X = \Sp_{2n}/P_S \times \GL_n/B_n^+ $ (see Theorem~\ref{thm:explicit.embedding.typeCI.to.typeAIII}).  
Note that $ X \simeq \LGrass(\C^{2n}) \times \FlagVar_n $, 
where $ \LGrass(\C^{2n}) $ is the Lagrangian Grassmannian in the symplectic vector space $ \C^{2n} $.

\bigskip

We thank the organizers of the conference held in Dubrovnik, Croatia, June 24--29, 2019.  
This conference offered the authors a good opportunity to review their previous joint works seriously  
and it leads to new results reported here.  
Also we thank Anthony Henderson for the correspondence and 
Takuya Ohta for discussions on orbit embeddings.


\section{Steinberg theory for symmetric pairs: a review}\label{S1:Steinberg.theory.for.symmetric.pairs}

In this section, we review the Steinberg theory for symmetric pairs given in \cite{Fresse.N.2020}, 
although we describe it in a slightly different manner in this article.

Let $ G $ be a complex connected reductive algebraic group with an involutive automorphism $ \theta: G \to G $. 
Let $ K := G^\theta $ be the fixed-point subgroup of $ \theta $.  
Thus we have a symmetric pair $ (G, K) $ and $ K $ is called a symmetric subgroup.
Assume for simplicity that $ K $ is connected.

By differentiation, the involution $ \theta $ induces an involution on the Lie algebra $ \lie{g} := \Lie(G) $, which we also denote $ \theta: \lie{g} \to \lie{g} $ by abuse of notation. 
Let $ \lie{k} := \Lie(K) = \lie{g}^\theta $ the fixed-point subalgebra and 
put $ \lie{s} := \lie{g}^{-\theta} = \{ x \in \lie{g} \mid \theta(x) = - x \} $.  
Let $ x \mapsto x^\theta $ and $ x \mapsto x^{-\theta} $ stand for 
the projections $ \projk : \lie{g} \to \lie{k} $ and $ \projs : \lie{g} \to \lie{s} $ 
along the Cartan decomposition $ \lie{g} = \lie{k} \oplus \lie{s} $. 
Let $ \nilpotents \subset \lie{g} $ be the cone of nilpotent elements and put 
$ \nilpotentvark := \nilpotents \cap \lie{k} $ and $ \nilpotentvars := \nilpotents \cap \lie{s} $. 
It is well known that 
the nilpotent varieties $\nilpotentvark $ and $\nilpotentvars $ consist of finitely many $K$-orbits.

Let us introduce the double flag variety
\begin{math}
\dblFV = G/P \times K/Q ,
\end{math}
where $ P $ and $ Q $ are parabolic subgroups of $ G $ and $ K $ respectively 
\cite{NO.2011}.  
The variety $ \dblFV $ is a smooth projective variety on which $ K $ acts diagonally.  
Let us denote $ \FlPG = G/P $ and $ \FlQK = K/Q $ so that 
\begin{equation*}
\dblFV = \FlPG \times \FlQK .
\end{equation*}

As usual, 
we identify $ \FlPG = G/P $ with the set of parabolic subalgebras $ \lie{p}_1 $ which are conjugate to 
$ \lie{p} = \Lie(P) $.  We denote by $ \nilradical{\lie{p}_1} $ 
the nilpotent radical of a parabolic subalgebra $ \lie{p}_1 $.  
Then the cotangent bundle over $ \FlPG $ is isomorphic to 
\begin{equation*}
T^*\FlPG = \{ ( \lie{p}_1, x ) \mid \lie{p}_1 \in \FlPG, \; x \in \nilradical{\lie{p}_1} \} 
\simeq G \times_P \nilradical{\lie{p}} . 
\end{equation*}
We denote by $ \mu_{\FlPG} : T^*\FlPG \to \nilpotents $ the second projection 
$ \mu_{\FlPG}(\lie{p}_1, x) = x $, which coincides with the moment map\footnote{
The cotangent bundle $ T^*\FlPG $ admits a canonical $ G $-invariant symplectic structure.  
Since the action of $ G $ on the symplectic variety 
$ T^*\FlPG (\simeq G \times_P \nilradical{\lie{p}}) $ is Hamiltonian, 
there exists a moment map $ \mu : T^*\FlPG \to \lie{g}^* $.  
We fix once and for all a nondegenerate invariant bilinear form on $ \lie{g} $ and identify 
$ \lie{g} \simeq \lie{g}^* $.  
With this identification, the conormal direction 
$ (\lie{g}/\lie{p}_1)^\bot \subset \lie{g}^* $ is identified with $ \lie{u}_{\lie{p}_1} $ 
which is contained in $ \nilpotents $.  So the image of $ \mu $ is actually contained 
in the nilpotent variety $ \nilpotents $.  
We do not repeat similar arguments below, but the term ``moment map'' should be always understood in this way.
} 
with respect to 
a standard symplectic structure on $ T^*\FlPG $.  
Similarly, we have the moment map 
\begin{equation*}
\mu_{\FlQK} : T^*\FlQK = \{ ( \lie{q}_1, y ) \mid \lie{q}_1 \in \FlQK, \; y \in \nilradical{\lie{q}_1} \} \to \lie{k} , \quad 
\mu_{\FlQK}(\lie{q}_1, y) = y ,
\end{equation*}
with the obvious notation similar to those for $ \FlPG $.

\begin{definition}
Let $ \conormalvariety := T^*\FlPG \times_{\nilpotentvark} T^*\FlQK $ be the fiber product over the nilpotent variety $ \nilpotentvark $:
\begin{equation*}
\xymatrix @R-1ex @M+.5ex @C-2ex @L+.5ex {
\conormalvariety = T^*\FlPG {\times_{\nilpotentvark}} T^*\FlQK
\ar[dd]^-{p_2} \ar[rr]^-{p_1}  \ar@{->}[ddrr]|-{\;\;\varphi^{\theta}\;\;}
& & T^*\FlPG \ar[d]^{\mu_{\FlPG}} \ni (\lie{p}_1, x) \\
& & \nilpotents_{\lie{g}} \ni x \ar[d]^{\projk} \\
(\lie{q}_1, y) \in T^*\FlQK \ar[rr]^-{-\mu_{\FlQK}} & & \nilpotentvark \ni -y = x^{\theta} 
} 
\end{equation*}
We call $ \conormalvariety = \conormalvariety_{\dblFV} $ the \emph{conormal variety} for the double flag variety 
$ \dblFV $.
\end{definition}

The definition of the conormal variety $ \conormalvariety $ looks different from 
that in \cite{Fresse.N.2020}, but they are isomorphic.  
In fact, we know

\begin{Fact}\label{fact:conormal.var.is.null.fiber}
\begin{thmenumerate}
\item\label{fact:conormal.var.is.null.fiber:item:1}
Let $ \mu_{\dblFV} : T^*\dblFV \to \lie{k} $ be the moment map 
for the canonical Hamiltonian action of $ K $ on the cotangent bundle $ T^*\dblFV $.  
Then the conormal variety is isomorphic to the null fiber of the moment map: 
$ \conormalvariety \simeq \mu_{\dblFV}^{-1}(0) $.

\item
Let $ \orbit \subset \dblFV $ be a $ K $-orbit.  
We denote by $ T^*_{\orbit}\dblFV $ the conormal bundle over $ \orbit $.  
The conormal variety is a disjoint union of the conormal bundles: 
$ \conormalvariety = \coprod_{\orbit \in \dblFV/K} T^*_{\orbit}\dblFV $.

\item\label{fact:conormal.var.is.null.fiber:item:3}
The dimension of the conormal variety is equal to $ \dim \dblFV $ 
if and only if there are only finitely many $ K $-orbits in $ \dblFV $.  
In this case, $ \conormalvariety $ is equidimensional and 
each irreducible component arises as the closure of a conormal bundle.  
Thus 
$ \conormalvariety = \bigcup_{\orbit \in \dblFV/K} \closure{T^*_{\orbit}\dblFV} $ 
gives the decomposition into irreducible components.

\end{thmenumerate}
\end{Fact}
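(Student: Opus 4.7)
My plan is to handle the three parts sequentially, leveraging standard symplectic geometry.

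For part~(1), I would identify $ T^*\dblFV \simeq T^*\FlPG \times T^*\FlQK $ canonically using the product structure of $ \dblFV $. Since $ K $ acts diagonally, the moment map decomposes as a sum. On $ K/Q $ it is $ \mu_{\FlQK} $, while on $ G/P $ it is the composition of the $ G $-moment map $ \mu_{\FlPG} $ with the restriction $ \lie{g}^* \to \lie{k}^* $, which under the chosen invariant form becomes $ \projk $. Hence
\begin{equation*}
\mu_{\dblFV}(\lie{p}_1, x, \lie{q}_1, y) = x^{\theta} + y ,
\end{equation*}
and its zero locus is precisely the fiber product that defines $ \conormalvariety $ (the sign is absorbed into the arrow $ -\mu_{\FlQK} $ appearing in the diagram).

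For part~(2), I would invoke the standard fact from Hamiltonian geometry: for any smooth $ K $-variety $ Y $ with moment map $ \mu_Y $, a covector $ \xi \in T^*_y Y $ satisfies $ \mu_Y(y,\xi) = 0 $ if and only if $ \xi $ annihilates the image of the infinitesimal action $ \lie{k} \to T_y Y $, i.e., annihilates $ T_y(K \cdot y) $. Thus $ \mu_Y^{-1}(0) $ is set-theoretically the union of the conormal bundles $ T^*_{\orbit} Y $, and the union is disjoint because each $ y \in Y $ lies in a unique $ K $-orbit. Combined with part~(1) applied to $ Y = \dblFV $, this yields the asserted decomposition of $ \conormalvariety $.

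For part~(3), each $ T^*_{\orbit} \dblFV $ is a vector bundle of rank $ \codim_{\dblFV} \orbit $ over the smooth orbit $ \orbit $, so $ \dim T^*_{\orbit} \dblFV = \dim \dblFV $ uniformly. If $ \dblFV $ carries only finitely many $ K $-orbits, then by~(2) $ \conormalvariety $ is a finite union of locally closed irreducible subvarieties of common dimension $ \dim \dblFV $, and taking closures yields both the claimed equidimensionality and the irreducible component decomposition. The main obstacle is the converse: if there are infinitely many orbits, I would argue that on some $ K $-stable locally closed subset $ S \subset \dblFV $ they form a positive-dimensional family, for instance by applying Rosenlicht's theorem to produce a rational quotient of positive dimension. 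Over such a family the total conormal space has dimension at least $ \dim \dblFV + 1 $, forcing $ \dim \conormalvariety > \dim \dblFV $, which is the contrapositive of the desired implication.
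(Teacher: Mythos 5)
Your proposal is correct and is essentially the argument this statement rests on: the paper itself records it as a Fact \emph{without} proof (it is imported from \cite{Fresse.N.2020}, where the conormal variety is defined directly as the null fiber $\mu_{\dblFV}^{-1}(0)$), and the standard computation you give — the diagonal moment map $\mu_{\dblFV}((\lie{p}_1,x),(\lie{q}_1,y))=x^{\theta}+y$, whose zero locus is the fiber product with the sign absorbed into the arrow $-\mu_{\FlQK}$, together with the pointwise identification of $\mu^{-1}(0)$ with conormal directions to orbits — is exactly what underlies items (1) and (2) under the paper's conventions. Your handling of (3) is also sound: the finite-orbit direction is the routine closure argument, and for the converse the passage to a $K$-stable locally closed stratum where orbits have constant dimension $d$ but which has dimension $>d$ (via Rosenlicht or the orbit-dimension stratification) gives a subbundle of corank-$d$ conormal directions of total dimension at least $\dim\dblFV+1$, which is the standard dimension count.
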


We are particularly interested in the case where 
there are only finitely many $ K $-orbits in $ \dblFV $.  
However, for the time being, we do not assume it and develop a general theory.

Let us denote the diagonal map in the fiber product by 
$ \varphi^{\theta} : \conormalvariety \to \nilpotentvark $.  
This map is explicitly described as 
\begin{equation*}
\varphi^{\theta}( (\lie{p}_1, x), (\lie{q}_1, y) ) = x^{\theta} = - y \quad
\text{ for } \;\;
( (\lie{p}_1, x), (\lie{q}_1, y) ) \in \conormalvariety .
\end{equation*}
Note that we consider 
$ \conormalvariety \subset T^*\FlPG {\times} T^*\FlQK = T^*\dblFV $ here.  
It is not enough to specify the conormal fiber only by $ \varphi^{\theta} $ and 
we need another map 
\begin{equation*}
\varphi^{-\theta}( (\lie{p}_1, x), (\lie{q}_1, y) ) = x^{-\theta} = x + y \quad
\text{ for } \;\;
( (\lie{p}_1, x), (\lie{q}_1, y) ) \in \conormalvariety .
\end{equation*}
We call $ \varphi^{\theta} $ the \emph{generalized Steinberg map} and 
$ \varphi^{-\theta} $ the \emph{exotic Steinberg map}.  
Both maps are clearly $ K $-equivariant, but a priori not closed (see \cite[Remark 11.3]{Fresse.N.2020}).

By definition, 
the image $ \Image \varphi^{\theta} $ is contained in the nilpotent variety 
$ \nilpotentvark $.  
There is no guarantee that the image $ \Image \varphi^{-\theta} $ is contained 
in the nilpotent variety, 
but in many interesting cases it is so.  
For this, we refer the readers to \cite[\S 4.1]{Fresse.N.2016}.

\begin{assumption}\label{assumption:exotic.Steinberg.map.to.nilpotents}
We assume $ \Image \varphi^{-\theta} \subset \nilpotentvars $ throughout in this paper.  
\end{assumption}

Note that 
the sets $G\nilradical{\lie{p}}\subset\nilpotents $ and 
$K\nilradical{\lie{q}}\subset\nilpotentvark $ are the closures of 
the Richardson nilpotent orbits associated to $P$ and $Q$ respectively.  
So the above assumption is equivalent to claiming that 
$ \projs(G\nilradical{\lie{p}}\cap(K\nilradical{\lie{q}}+\lie{s} )) \subset \nilpotentvars $. 
In other words,
$ x \in G\nilradical{\lie{p}} $ and $ x^\theta \in K\nilradical{\lie{q}} $ imply 
$ x^{-\theta} \in \nilpotentvars $.

Let $ \pi $ be the projection from the cotangent bundle $ T^*\dblFV $ to $ \dblFV $ (the bundle map) 
and consider the following double fibration maps.
\begin{equation*} 
\xymatrix @R+1ex @M+.5ex @C-2ex @L+.5ex {
& \conormalvariety = T^*\FlQK {\times_{\nilpotents^\theta}} T^*\FlPG \ar[ld]_-{\pi} \ar[dr]^-{\;\varphi^{\pm \theta}\;} \\
\makebox[3ex][c]{$ \dblFV = \FlQK\times\FlPG $} & & \makebox[3ex][c]{$\nilpotents^{\pm \theta}$}
} 
\end{equation*}
Using this diagram, we define orbit maps 
\begin{equation*}
\Phi^{\pm \theta} : \dblFV/K \xrightarrow{\qquad} \nilpotents^{\pm \theta}/K, 
\qquad
\orbit \mapsto \calorbit 
\end{equation*}
by 
$ \closure{\varphi^{\pm \theta}( \pi^{-1}(\orbit) )} 
= \closure{\calorbit} $, 
where $ \orbit $ is a $ K $-orbit in $ \dblFV $ and 
$ \calorbit $ is a nilpotent $ K $-orbit in $ \nilpotents^{\pm \theta} $.  
This definition works since there are only finitely many nilpotent $ K $-orbits 
both in $ \nilpotentvark $ and $ \nilpotentvars $.
By abuse of the terminology, we also call 
$ \Phi^{\theta} $ the \emph{generalized Steinberg map} and 
$ \Phi^{-\theta} $ the \emph{exotic Steinberg map}.  

Since $ \pi^{-1}(\orbit) = T^*_{\orbit}\dblFV $ is the conormal bundle over $ \orbit $, 
$ \Phi^{\pm \theta}(\orbit) = \calorbit $ if and only if 
$ \closure{\varphi^{\pm \theta}(T^*_{\orbit}\dblFV)} = \closure{\calorbit} $.  
Recall that, 
if there exist only finitely many $ K $-orbits in $ \dblFV $, the closure of
$ T^*_{\orbit}\dblFV $ can also be interpreted as 
an irreducible component of the conormal variety $ \conormalvariety $
(Fact~\ref{fact:conormal.var.is.null.fiber}~\eqref{fact:conormal.var.is.null.fiber:item:3}).

\section{Combinatorial Steinberg maps}

In this section, 
we will discuss a combinatorial side of the Steinberg theory.  
Thus \emph{we assume that there are finitely many $ K $-orbits in the double flag variety $ \dblFV = \FlPG \times \FlQK $}.  
In this situation,  
if we know an explicit classification of the $ K $-orbits in $ \dblFV $, 
both Steinberg maps
$ \Phi^{\pm \theta} : \dblFV / K \to \nilpotents^{\pm \theta} $ 
might have interesting combinatorial interpretations.  

\subsection{Classical Steinberg map and the Robinson-Schensted correspondence}

Let us first review the results by Steinberg \cite{Steinberg.1988}.
We consider a special case where $ G = K $ 
(i.e., $ \theta = \id_G $), and take $ P = Q = B $ to be a Borel subgroup of $ G $.  
Then $ \dblFV = G/B \times G/B = \FlBG \times \FlBG $, 
where $ \FlBG = G/B $ is the full flag variety.  

In this case, we see that
$ \dblFV / G \simeq B \backslash G / B $, 
and the double coset space on the right hand side is parametrized by the Weyl group $ W $ 
thanks to the Bruhat decomposition.  
Also, since $ \lie{s} = 0 $, the nilpotent variety $ \nilpotentvars $ vanishes and 
$ \varphi^{-\theta} $ is zero.  
The nilpotent variety $ \nilpotentvark $ coincides with $ \nilpotents = \nilpotents_{\lie{g}} $.  
Thus our map $ \Phi^{\theta} $ reduces to $ \Phi : W \to \nilpotents / G $.  
Note that both the Weyl group $ W $ and the set of nilpotent orbits $ \nilpotents / G $ 
provide rich ingredients for combinatorics.  

Let us examine it in the case of $ G = \GL_n = \GL_n(\C) $.  
We choose 
$ B = B_n^+ $ (the Borel subgroup of upper triangular matrices).   
The Weyl group $ W $ is simply the symmetric group $ S_n $ of order $ n $ and   
the set of nilpotent $ \GL_n $-orbits in $ \nilpotents $ is 
in bijection with the set of partitions of $ n $, which we will denote by $ \partitionsof{n} $. 

For $ w \in S_n $, we take a permutation matrix denoted by the same letter, 
and write $ \orbit_w $ for the $ G $-orbit through $ (B, wB) \in \dblFV $.
Then, as we explained, 
$ \conormalvariety_w := \closure{\pi^{-1}(\orbit_w)} $ is an irreducible component of the variety
$ \conormalvariety $ (called \emph{Steinberg variety}, in this setting).  
The image of this irreducible component by $ \varphi = \varphi^{\theta} $ is 
the closure of a nilpotent orbit in $ \nilpotents $, which is parametrized by a partition 
$ \lambda \in \partitionsof{n} $.
Thus we get $ \varphi(\conormalvariety_w) = \closure{\calorbit_{\lambda}} $, 
which establishes the map $ \Phi : S_n \to \partitionsof{n} $.

It is the Robinson-Schensted correspondence that plays another important role of the theory, 
which establishes a (combinatorial) bijection 
between $ S_n $ and the set of pairs of standard tableaux of same shape (see \cite{Fulton}, for example).  
Thus we have 
\begin{equation*}
\RS : S_n \xrightarrow{\;\;\sim\;\;} 
\coprod\limits_{\lambda \in \partitionsof{n}} 
\{ (T_1, T_2) \mid T_i \in \STab_{\lambda} \} , 
\end{equation*}
where 
$ \STab_{\lambda} $ 
denotes the set of standard tableaux of the shape $ \lambda $.

\begin{theorem}[Steinberg~\cite{Steinberg.1988}]
The Steinberg map $ \Phi : S_n \ni w \mapsto \lambda \in \partitionsof{n} $ \quad
defined by $ \varphi(\conormalvariety_w) = \closure{\calorbit_{\lambda}} $ 
factors through the Robinson-Schensted correspondence.
\begin{equation*}
\xymatrix  @M 3pt @R 8ex{
S_n \ar[drr]_{\Phi} \ar@{->}[rr]^-{\sim}_-{\RS} & & \coprod\limits_{\lambda \in \partitionsof{n}} 
\{ (T_1, T_2) \mid T_i \in \STab_{\lambda} \} \ar[d] & \makebox[0ex][c]{$\ni (T_1, T_2)$} \ar@{|->}[d]\\
& & \partitionsof{n} & \makebox[0ex][c]{$\ni \lambda = \shape{T_i}$} }
\end{equation*}
\end{theorem}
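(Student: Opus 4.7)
The plan is to describe the set $\Irr(\conormalvariety)$ of irreducible components of the Steinberg variety in two different ways --- via the bundle projection $\pi$ and via the Springer-type map $\varphi$ --- and then to match these two parametrizations, showing that the resulting bijection is the Robinson--Schensted correspondence $\RS$.

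By Fact~\ref{fact:conormal.var.is.null.fiber}~\eqref{fact:conormal.var.is.null.fiber:item:3} together with the Bruhat decomposition of $\FlagB \times \FlagB$, the components are precisely $\conormalvariety_w = \closure{T^*_{\orbit_w}(\FlagB \times \FlagB)}$ for $w \in S_n$; this is the first parametrization. For the second, I would examine the fiber of $\varphi$ over an arbitrary $x \in \nilpotents$ of Jordan type $\lambda$: by definition, $\varphi^{-1}(x) = \FlagB_x \times \FlagB_x$, where $\FlagB_x$ is the Springer fiber. The Spaltenstein--Steinberg theorem asserts that $\FlagB_x$ is equidimensional, with irreducible components $\{\calX_T\}_{T \in \STab_\lambda}$ indexed by standard tableaux of shape $\lambda$. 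Hence $\varphi^{-1}(x)$ has irreducible components $\calX_{T_1} \times \calX_{T_2}$, and a dimension count using the $G$-equivariance of $\varphi$ shows that sweeping these by the $G$-action along $\calorbit_\lambda$ produces, after closure and as $\lambda$ and $(T_1,T_2) \in \STab_\lambda^2$ vary, exactly the elements of $\Irr(\conormalvariety)$. This yields the second parametrization
\begin{equation*}
\Irr(\conormalvariety) \;\simeq\; \coprod_{\lambda \in \partitionsof{n}} \STab_\lambda \times \STab_\lambda.
\end{equation*}

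Matching the two descriptions defines a bijection $\Psi: S_n \to \coprod_\lambda \STab_\lambda \times \STab_\lambda$, $w \mapsto (T_1(w),T_2(w))$. By construction, $\conormalvariety_w$ dominates $\closure{\calorbit_{\shape{T_i(w)}}}$ under $\varphi$, so $\Phi(w) = \shape{T_1(w)} = \shape{T_2(w)}$; in particular the lower triangle of the diagram in the statement commutes as soon as $\Psi = \RS$, which is the remaining identification to establish. The main obstacle is precisely this last step. For it I would follow Steinberg's original strategy: induct on $\length{w}$ by analyzing how multiplication by a simple reflection $s_i$ affects both sides. Geometrically, $\orbit_{s_i w}$ is related to $\orbit_w$ by a codimension-one degeneration controlled by the minimal parabolic $P_i \supset B$, and under the associated $\mathbb{P}^1$-fibration $\FlagB \to G/P_i$ the Springer components $\calX_T$ transform according to a rule that, after some combinatorial bookkeeping, reproduces Schensted's row-insertion (equivalently, the Knuth moves on the recording tableau). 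Carrying out this compatibility verification is the technical heart of the argument; it is essentially the content of Steinberg's paper \cite{Steinberg.1988}.
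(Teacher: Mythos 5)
The paper itself gives no proof of this theorem: it is quoted as Steinberg's result \cite{Steinberg.1988}, so your proposal can only be measured against Steinberg's original argument. Your geometric framework is the correct and standard one: the components of $\conormalvariety$ are parametrized by $S_n$ via $\pi$ and the Bruhat decomposition, and also, using equidimensionality of Springer fibers together with the dimension formula $\dim\FlagB_x=\dim\FlagB-\tfrac{1}{2}\dim\calorbit_x$, by pairs of same-shape standard tableaux via $\varphi$; matching the two parametrizations yields a bijection $\Psi$ with $\Phi(w)=\shape{\Psi(w)}$, exactly as you say.

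The difficulty is that the whole content of the theorem is the identification $\shape{\Psi(w)}=\shape{\RS(w)}$ (indeed $\Psi=\RS$ up to convention), and this is precisely the step you defer to \cite{Steinberg.1988}; as written, your argument is a reduction of the statement to the cited theorem rather than a proof of it. Moreover, the strategy you sketch for that step is not Steinberg's and is dubious as stated. Steinberg identifies the tableaux by taking a generic triple $(F,F',x)$ with $(F,F')$ in relative position $w$ and $x$ generic in the intersection of the nilradicals of the two Borel subalgebras, recording the Jordan types of the restrictions of $x$ to the subspaces of each flag, and showing by induction on $n$ (restriction to subflags/hyperplanes) that the two resulting chains of partitions are the Robinson--Schensted tableaux of $w$. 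An induction on $\length{w}$ through minimal parabolic $\mathbb{P}^1$-fibrations faces a real obstruction: multiplication of $w$ by a simple reflection is not a Knuth or dual Knuth move, so the RS tableaux do not transform by any simple local rule under $w\mapsto s_iw$, and the ``combinatorial bookkeeping'' you invoke is exactly where such an induction would break down or require a genuinely different input.
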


\subsection{A symmetric pair of type AIII}\label{subsec:symmetric.pair.typeAIII}

Let us consider the case of the symmetric pair $(G,K)=(\GL_{2n},\GL_n\times \GL_n)$.   
This case is studied in \cite{Fresse.N.2020}. 

We take a Siegel parabolic subgroup $P=P_\mathrm{S}\subset \GL_{2n}$ of $ G $, 
which is the stabilizer of the $n$-dimensional subspace $\C^n\times\{0\}^n\subset \C^{2n}$, 
and a Borel subgroup $Q=B_n^+\times B_n^-\subset \GL_n\times \GL_n$ of $ K $, 
where $ B_n^- $ denotes the lower triangular Borel subgroup of $ \GL_n $.     
Thus the double flag variety $\dblFV $ becomes 
\[
\dblFV =\GL_n/B_n^+ \times \GL_n/B_n^- \times \Grass_n(\C^{2n}),
\]
where $\Grass_n(\C^{2n})$ stands for the Grassmann variety of $n$-dimensional subspaces in $\C^{2n}$.
In \cite[Theorem~8.1]{Fresse.N.2020}, the $ K $-orbits on $ \dblFV $ are completely classified.  
As a result, there are only finitely many $ K $-orbits in $ \dblFV $.  
Let us review the classification briefly.

A \emph{partial permutation} $ \tau $ on the set $ [n] = \{ 1, 2, \dots, n \} $ is an injective map from a subset 
$ J \subset [n] $ to $ [n] $.  
It is convenient to extend $ \tau : J \to [n] $ to 
$ \tau : [n] \to [n] \cup \{ 0 \} $ by putting 
$ \tau(k) = 0 $ for $ k \not\in J $.   
As in the case of permutation, we can associate a matrix in 
$ \Mat_n $ with a partial permutation $ \tau $, which we also denote by $ \tau $  by abuse of notation. 
Namely the matrix $ \tau $ is given by 
$ (\eb_{\tau(1)}, \eb_{\tau(2)}, \dots, \eb_{\tau(n)}) $, 
where $ \eb_0 = 0 $ and $ \eb_1, \dots, \eb_n $ denote the elementary basis vectors of $ \C^n $.
Let us denote by $ T_n $ the set of all partial permutation matrices and put 
\begin{equation*}
\regTnxTn = \Bigl\{ \omega = \vectwo{\tau_1}{\tau_2} \Bigm| \tau_1, \tau_2 \in T_n \text{ and } \rank \omega = n \Bigr\} \subset \Mat_{2n, n} .
\end{equation*}
Then the image $ [\omega] := \Im \omega $ generated by the column vectors of $ \omega $ is an $ n $-dimensional vector space, hence 
it represents a point in $ \Grass_n(\C^{2n}) $.  
Notice that the symmetric group $ S_n $ acts on $ \regTnxTn $ by the right multiplication (or permutation of the column vectors) 
and this action does not change the corresponding subspace $ [\omega] $.  
Let us denote the parabolic subgroup of $ G $ stabilizing $ [\omega] $ by $ P_{\omega} $.

\begin{theorem}[{\cite[Theorem~8.1]{Fresse.N.2020}}]\label{thm:orbits.in.dblFV.typeAIII}
There exist natural bijections 
\begin{equation*}
\dblFV /K \simeq \Grass_n(\C^{2n}) / (B_n^+\times B_n^-) \simeq \regTnxTn / S_n .
\end{equation*}
The bijections are explicitly given by 
$ K \cdot (B_n^+\times B_n^-, P_{\omega}) \leftrightarrow (B_n^+\times B_n^-) \cdot [\omega] \leftrightarrow \omega S_n $.
\end{theorem}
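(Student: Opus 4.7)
The plan is to prove the two bijections separately. For the identification $\dblFV/K \simeq \Grass_n(\C^{2n})/(B_n^+\times B_n^-)$, I would use the orbit-slice principle: $K$-orbits on $K/B_K \times G/P$ are in bijection with $B_K$-orbits on $G/P$ (here $B_K = B_n^+\times B_n^-$ and $P = P_{\mathrm{S}}$), since any point $(kB_K, gP)$ is $K$-translated to the slice representative $(eB_K, k^{-1}gP)$, and two slice points $(eB_K, g_iP)$ are $K$-equivalent iff some $b \in B_K = \Stab_K(eB_K)$ satisfies $g_2P = bg_1P$. Combined with $G/P \simeq \Grass_n(\C^{2n})$, this yields the first bijection.

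For the second bijection $\Grass_n(\C^{2n})/B_K \simeq \regTnxTn/S_n$, the map $\omega S_n \mapsto B_K \cdot [\omega]$ is well-defined since right multiplication by $S_n$ permutes the columns of $\omega$ and leaves $[\omega]$ unchanged. Injectivity I would extract from the rank matrix
\[
r_V(i,j) := \dim\bigl(V \cap (V^+_i \oplus V^-_j)\bigr), \quad 0 \leq i,j \leq n,
\]
where $V^+_i = \langle \eb_1,\ldots,\eb_i\rangle$ and $V^-_j = \langle \eb_{2n-j+1},\ldots,\eb_{2n}\rangle$ are the flags stabilized by $B_n^+$ and $B_n^-$ respectively. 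This is obviously a $B_K$-invariant, and for $V = [\omega]$ with $\omega = \vectwo{\tau_1}{\tau_2}$, the injectivity of the partial permutations yields the explicit count
\[
r_V(i,j) = \#\bigl\{k : \tau_1(k) \leq i \text{ and } (\tau_2(k)=0 \text{ or } \tau_2(k) \geq n-j+1)\bigr\},
\]
which depends only on $\omega S_n$. A discrete second difference of $r_V$ in the variables $(i,j)$ (together with the boundary rows $j=0$ and the boundary column $i=0$) recovers each pair $(\tau_1(k), \tau_2(k))$ individually, so $r_V$ determines $\omega S_n$.

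For surjectivity, given $V \in \Grass_n(\C^{2n})$, I would construct $\omega \in \regTnxTn$ and $b \in B_K$ with $V = b \cdot [\omega]$ via a simultaneous echelon reduction. Represent $V$ by any $2n \times n$ matrix $\vectwo{A}{B}$ of rank $n$ and act by $B_n^+$ on the top block and $B_n^-$ on the bottom block (from the left, preserving the $B_K$-orbit of $V$), together with arbitrary column operations from the right (a $\GL_n$-action corresponding to change of basis in $V$). Walking greedily through the flags $\{V^+_i\}$ and $\{V^-_j\}$: whenever $V \cap (V^+_i \oplus V^-)$ acquires a new dimension over $V \cap (V^+_{i-1}\oplus V^-)$, select a column with top pivot exactly at row $i$ (normalized to $1$, with the higher entries cleared by the upper-triangular action); analogously arrange for pivots at positions $2n-j+1$ in the bottom block. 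Matching top and bottom pivots column-by-column yields a basis of $V$ of the required partial-permutation form, producing $\omega \in \regTnxTn$ with $[\omega]$ in the $B_K$-orbit of $V$. Chaining the two reductions gives the explicit representatives asserted in the theorem.

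The main obstacle is the coordination between the two echelon procedures: the same $n$ columns must simultaneously carry both the top-block and bottom-block partial-permutation structure, so the pivot selections must be performed consistently. I would handle this by proceeding column-by-column and at each step classifying the newly-gained dimension as coming from a \emph{pure-top} vector ($\tau_2(k)=0$), a \emph{pure-bottom} vector ($\tau_1(k)=0$), or a \emph{coupled} vector (both components nonzero). The rank-matrix invariant of the previous step then guarantees that the resulting $\omega$ is uniquely determined up to $S_n$, which closes the loop with injectivity; care is required because the indexing of the $V^-_j$-flag is reversed relative to that of $V^+_i$.
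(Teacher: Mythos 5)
The paper does not actually prove this theorem; it is cited without proof from \cite{Fresse.N.2020}, so there is no in-paper argument to compare against. Judged on its own merits, your proposal gets the overall strategy right but leaves a genuine gap in surjectivity.

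What works: the first identification $\dblFV/K \simeq \Grass_n(\C^{2n})/B_K$ via the orbit-slice principle is standard and correct. For the second identification, well-definedness of $\omega S_n \mapsto B_K\cdot[\omega]$ is immediate. Your injectivity argument is sound: the rank data $r_V(i,j) = \dim(V\cap(V^+_i\oplus V^-_j))$ is $B_K$-invariant, and for $V=[\omega]$ one has $V\cap(V^+_i\oplus V^-_j)$ spanned exactly by those columns of $\omega$ that lie inside $V^+_i\oplus V^-_j$ (here one must observe that the columns form a linearly independent set with coordinate-vector top and bottom parts, so no hidden combination enters the coordinate subspace). Then $\Delta r_V(i,j) := r_V(i,j)-r_V(i-1,j)-r_V(i,j-1)+r_V(i-1,j-1) = \#\{k : (\tau_1(k),\tau_2(k))=(i,n-j+1)\}$ for $i,j\geq 1$, and the two boundary differences recover the pure-top pairs $(i,0)$ and pure-bottom pairs $(0,j)$; this reconstructs the multiset $\{(\tau_1(k),\tau_2(k))\}_k$, hence $\omega S_n$. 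So $r_V$ is a complete invariant on the image of the map, and injectivity follows.

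The gap is in surjectivity. You acknowledge it yourself: the coordination of the two echelon reductions is the crux, and your proposed resolution does not close it. Walking the top flag and selecting, for each jump at height $i$, \emph{some} column with top pivot at row $i$, does not guarantee that the \emph{same} $n$ columns can simultaneously be arranged to carry distinct bottom pivots with respect to the $B_n^-$-flag; the column chosen for a given top pivot may have a bottom part that collides with another column's bottom pivot, and the upper/lower-triangular clearing operations affect the bottom and top blocks independently, so one cannot freely fix one block and then treat the other. Your final remark --- that the rank-matrix invariant ``guarantees that the resulting $\omega$ is uniquely determined,'' --- addresses only uniqueness of the answer, not the existence of a representative of the required form, which is precisely what surjectivity demands. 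To repair this one needs an actual construction: for instance, first reduce $U_-:=\pi_-(V)$ to a coordinate subspace of $V^-$ using $B_n^-$ and simultaneously handle $V\cap V^+ = \ker\pi_-$, then argue inductively on the remaining coupled part that the induced map to $V^+$ can be put in pivot form by the residual upper-triangular freedom; or alternatively work with the refined filtration of $V$ by the family $V\cap(V^+_i\oplus V^-_j)$ ordered suitably and extract a basis compatible with both flags. Either route requires real bookkeeping that your sketch does not supply, so as written the surjectivity half of the second bijection is not established.
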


In this setting Assumption~\ref{assumption:exotic.Steinberg.map.to.nilpotents} is satisfied (see \cite[Table~3]{NO.2011} and \cite[Proposition~4.2]{Fresse.N.2016}).  
Thus by the machinery which we have already described, we get the Steinberg maps 
$ \Phi^{\theta} : \regTnxTn/S_n \to \nilpotentvark / K $ and 
$ \Phi^{-\theta} : \regTnxTn/S_n \to \nilpotentvars / K $.
Since $ K = \GL_n \times \GL_n $, the nilpotent variety $ \nilpotentvark $ is the product of 
two copies of the cone of nilpotent matrices of size $ n $.  
Thus the nilpotent $ K $-orbits in $ \nilpotentvark $ are classified by 
the pairs of partitions: $ \nilpotentvark / K \simeq \partitionsof{n}^2 $.  
The nilpotent orbits in $ \nilpotentvars $ are classified by the signed Young diagrams of size $ 2n $ 
with signature $ (n, n) $ (see \cite{Collingwood.McGovern.1993}, or \cite{Trapa.2005}).  
We denote this set of signed Young diagrams by $ \SYD(n,n) $.  
Thus we have the generalized and exotic Steinberg maps
\begin{equation*}
\Phi^{\theta} : \regTnxTn/S_n \to \partitionsof{n}^2 \quad \text{ and } \quad 
\Phi^{-\theta} : \regTnxTn/S_n \to \SYD(n,n), 
\end{equation*}
both of which are combinatorial.

\begin{theorem}\label{thm:existence.of.algorithm}
There exist efficient combinatorial algorithms which describe the Steinberg maps 
$ \Phi^{\pm \theta} $.
\end{theorem}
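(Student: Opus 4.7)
The plan is to build, in the spirit of Steinberg's classical theorem, explicit Robinson--Schensted type insertion algorithms on the combinatorial set $\regTnxTn/S_n$ and then verify that their outputs compute the geometric Steinberg maps $\Phi^{\pm\theta}$. Since $S_n$ acts on $\regTnxTn$ by permuting the $n$ columns of $\omega = \vectwo{\tau_1}{\tau_2}$, any insertion-based invariant that is $S_n$-equivariant in the appropriate sense is a natural candidate for the algorithmic output.

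First, I would choose canonical representatives of the $S_n$-orbits by sorting the columns of $\omega$ according to their ``type'' (whether the $1$ lies in the upper block $\tau_1$, the lower block $\tau_2$, or in both), together with the row positions of those $1$'s; this turns $\regTnxTn/S_n$ into an explicit finite combinatorial set. For $\Phi^\theta$, where $\nilpotentvark/K\simeq\partitionsof{n}^2$ records the Jordan types of the two $\GL_n$ factors, I would extract from $\omega$ two biwords (one for each block), apply the usual $\RSK$ insertion to each to obtain an insertion/recording tableau pair, and declare the resulting pair of shapes $(\lambda,\mu)$ to be the value of the algorithm. The recording tableaux together with the column-type data would form the finer invariant, producing a bijection between $\regTnxTn/S_n$ and an explicit combinatorial set --- this is the generalized Robinson--Schensted correspondence for pairs of partial permutations of full rank alluded to in Theorem~\ref{thm:gen.Steinberg.map}. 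For $\Phi^{-\theta}$, whose image lies in $\SYD(n,n)$, I would construct a signed variant of the above insertion whose output is a single signed tableau with $n$ boxes of each sign, so that the signed shape matches the target signed Young diagram.

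The verification step would proceed geometrically. For each normalized representative $\omega$, I would identify a concrete point $((\lie{p}_1,x),(\lie{q}_1,y))$ in a dense open subset of the conormal bundle $T^*_{\orbit_\omega}\dblFV$, compute $x^\theta$ and $x^{-\theta}$ as explicit nilpotent block matrices in terms of the column data of $\omega$, and read off their Jordan types. I would then match these types with the shapes produced by the insertion algorithm, arguing by induction on $n$ and exploiting the interaction of $\RSK$ with single-letter row-insertion, in parallel with Steinberg's proof of the classical theorem. Compatibility with the right $S_n$-action is automatic from standard properties of $\RSK$, which ensures that the resulting shapes depend only on the $S_n$-orbit of $\omega$.

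The hard part is the verification step. Even in the special case $\tau_2=1_n$ treated in \cite{Fresse.N.2020}, the exotic image $x^{-\theta}$ mixes the Grassmannian and flag information in a non-trivial way, and uniform generic representatives are delicate to exhibit. For arbitrary $\omega$, I expect the main difficulty to lie in controlling the interaction between $\tau_1$ and $\tau_2$ when both contribute a $1$ to the same column of $\omega$: the rank $n$ condition globalizes these interactions into constraints that the combinatorial algorithm must encode exactly. Establishing that the shapes produced coincide with the Jordan types at the generic point, rather than merely lying below them in the closure order on $\nilpotents^{\pm\theta}/K$, will require a careful dimension comparison between the conormal fiber over $\orbit_\omega$ and the moment-map pre-image of the target nilpotent orbit.
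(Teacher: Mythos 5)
The paper does not actually prove Theorem~\ref{thm:existence.of.algorithm}: it states explicitly (in the paragraph immediately following the theorem and in the Introduction) that the algorithm for $\Phi^{\theta}$ is given without proof, the algorithm for $\Phi^{-\theta}$ is not given at all, and the full proof ``will appear elsewhere.'' What the paper does supply is the combinatorial content: Lemma~\ref{lemma:representatives.regTnxTn} gives canonical representatives of $\regTnxTn/S_n$ parametrized by a tuple $(r,p,q;J,M,M';I,L,L';\sigma)$, and Theorem~\ref{thm:gen.Steinberg.map} states (without proof) the formula for $\Phi^{\theta}$. Your overall verification strategy --- normalize $\omega$, exhibit a generic point of the conormal bundle, compute the Jordan type of its nilpotent image, and match shapes, with a dimension comparison to rule out falling into a smaller orbit --- is exactly the style of argument used in the special cases proved in \cite{Fresse.N.2020}, so at that level you are in good company.

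However, the specific algorithm you propose for $\Phi^{\theta}$ is not the correct one, and it cannot be repaired by bookkeeping alone. You suggest extracting one biword from $\tau_1$ and one from $\tau_2$, applying $\RSK$ to each, and taking the resulting pair of shapes as $(\lambda,\mu)$. Compare with the paper's formula in Theorem~\ref{thm:gen.Steinberg.map}: $\lambda = \shape\bigl(\Rect([L'] * \transpose{(\RSl(\sigma))} * [L])\bigr)$ and $\mu = \shape\bigl(\Rect([M'] * \transpose{(\RSr(\sigma))} * [M])\bigr)$. The shapes are produced by applying the classical $\RS$ not to $\tau_1$ or $\tau_2$ but to the bijection $\sigma : J \to I$ that encodes the \emph{overlap} of the two blocks, followed by jeu-de-taquin rectification against the vertical strips $[L],[L'],[M],[M']$. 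Your version fails already in the smallest nontrivial case: take $p=q=0$, so $\tau_1 = \sigma \in S_n$ and $\tau_2 = 1_n$. The paper's recipe yields $\lambda = \mu = \transpose{\shape(\RS(\sigma))}$, whereas blockwise $\RSK$ of $\tau_2 = 1_n$ always yields the single-row shape $(n)$, which is wrong whenever $\sigma$ is not the longest element. The underlying issue is that $\lambda$ and $\mu$ are \emph{not} functions of $\tau_1$ and $\tau_2$ taken in isolation; each depends on the joint combinatorics of the decomposition $(J,M,M')$, $(I,L,L')$, and $\sigma$. Any proposed algorithm must couple the two blocks through $\sigma$ and the complementary sets, as the rectification does. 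For $\Phi^{-\theta}$ your suggestion of ``a signed variant of insertion'' is too vague to assess, and the paper does not provide even the statement of the algorithm there.
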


Choose $ \omega = \vectwo{\tau_1}{\tau_2} \in \regTnxTn $.  
If $ \tau_1 $ or $ \tau_2 $ is a permutation, then up to the action of $ S_n $, 
we can assume that $ \tau_1 $ or $ \tau_2 $ is equal to $ 1_n $ (the identity matrix).  
In these special cases, the above theorem is already proved in Theorems 9.1 and 10.4 in \cite{Fresse.N.2020}.  
We will explain the algorithms below but 
the full proof of the above theorem will appear elsewhere.

\subsection{Generalizations of the Robinson-Schensted correspondence to type AIII}\label{subsec:gen.RS.correspondence.typeAIII}

To explain the ``efficient combinatorial algorithm'' in Theorem~\ref{thm:existence.of.algorithm}, we need some preparation.  

We choose a partition of $ [n] = \{ 1, \ldots, n \} $ into three disjoint subsets 
$ J, M, M' $ of sizes $ r = \# J ,\;  p = \# M, \; q = \# M' $ so that $ r + p + q = n $.  
Choose another partition into subsets of the same sizes denoted by $ I, L, L' $.  
Let $ \sigma : J \to I $ be a bijection and 
write $ J = \{ j_1 < \cdots < j_r \} $ and $ M = \{ m_1 < \cdots < m_p \} $.  
The same notation applies to the rest of the subsets $ M', I, L, L' $ too.  
Then we put 
\begin{align*}
\tau_1 &= 
\begin{pmatrix}
j_1 & \cdots & j_r & m_1 & \cdots & m_p & m'_1 & \cdots & m'_q \\
\sigma(j_1) & \cdots & \sigma(j_r) & 0 & \cdots & 0 & \ell'_1 & \cdots & \ell'_q
\end{pmatrix} ,
\\
\tau_2 &= 
\begin{pmatrix}
j_1 & \cdots & j_r & m_1 & \cdots & m_p & m'_1 & \cdots & m'_q \\
j_1 & \cdots & j_r & m_1 & \cdots & m_p & 0 & \cdots & 0 
\end{pmatrix} , 
\end{align*}
and consider $ \omega = \vectwo{\tau_1}{\tau_2} \in \regTnxTn $.  
It is straightforward to check the following lemma.

\begin{lemma}\label{lemma:representatives.regTnxTn}
For all the choices of numbers $ r, p, q $, partitions $ [n] = J \sqcup M \sqcup M' = I \sqcup L \sqcup L' $, and bijection
$ \sigma : J \xrightarrow{\;\sim\;} I $ as above, 
$ \{ \omega \} $ is a complete system of representatives for the quotient space $ \regTnxTn / S_n $.  
\end{lemma}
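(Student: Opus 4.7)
The plan is to prove existence (every $\omega \in \regTnxTn$ is $S_n$-equivalent to some $\omega$ in the stated normal form) and uniqueness (distinct data yield distinct orbits), by extracting all of the data $(r,p,q,J,M,M',I,L,L',\sigma)$ from intrinsic orbit-invariants of $\omega$.

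The preliminary step is to identify which features of $\omega$ are invariant under the right $S_n$-action. Right multiplication by a permutation matrix only permutes the columns of $\omega$, so it preserves the images $\Image(\tau_i) := \{k \in [n]:\eb_k \text{ is a column of }\tau_i\}$ for $i=1,2$. I set $A := \Image(\tau_2)$ and $B := \Image(\tau_1)$; in the normal form these will turn out to equal $J\cup M$ and $I\cup L'$ respectively. The condition $\rank\omega=n$ forces every column of $\omega$ to be nonzero, so $\Supp(\tau_1)\cup\Supp(\tau_2)=[n]$.

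For existence I proceed in two stages. First, I find $s_1\in S_n$ such that $\tau_2 s_1$ has support $A$ and equals the identity on $A$: since $\tau_2$ restricts to a bijection $\Supp(\tau_2)\to A$, a single column permutation brings the column $\eb_k$ of $\tau_2$ to position $k$ for every $k\in A$. Having normalized $\tau_2$, I set $J:=A\cap\Supp(\tau_1 s_1)$, $M:=A\setminus J$, $M':=[n]\setminus A$, $I:=\tau_1 s_1(J)$, $L':=\tau_1 s_1(M')$, $L:=[n]\setminus(I\cup L')$, and $\sigma:=\tau_1 s_1|_J$. The residual stabilizer of the normalized $\tau_2$ in $S_n$ is the Young subgroup $S_{M'}$, and I use this residual freedom to choose $s_2$, equal to the identity on $A$ and such that $\tau_1 s_1 s_2$ restricted to $M'$ becomes the unique increasing bijection $M'\to L'$. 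Then $\omega s_1 s_2$ is the representative in normal form attached to the data above.

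For uniqueness, suppose $\omega'=\omega s$ for some $s\in S_n$ with both $\omega$ and $\omega'$ in normal form, with respective data $(J_i,M_i,M_i',I_i,L_i,L_i',\sigma_i)$, $i=1,2$. The identity-on-support structure of the normal form means $\Supp(\tau_2)=\Image(\tau_2)=J\cup M$, so invariance of $\Image(\tau_2)$ immediately gives $J_1\cup M_1 = J_2\cup M_2 =: A$ and $M_1'=M_2'$. Comparing the two normalized forms of $\tau_2$ on $A$ forces $s$ to fix $A$ pointwise; this propagates to $\tau_1|_A = (\tau_1 s)|_A$, and reading off support and image on $A$ yields $J_1=J_2$, $M_1=M_2$, $I_1=I_2$, $\sigma_1=\sigma_2$. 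Invariance of $\Image(\tau_1)$ then gives $L_1'=L_2'$ and $L_1=L_2$. Finally, since both $\tau_1|_{M'}$ and $(\tau_1 s)|_{M'}$ are the unique increasing bijection $M'\to L'$, the injectivity of $\tau_1|_{M'}$ forces $s$ to fix $M'$ pointwise as well, so $s=e$.

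The main obstacle I anticipate is purely bookkeeping: keeping the conventions for the right $S_n$-action consistent throughout, and making sure each reduction (first normalize $\tau_2$, then exploit the residual $S_{M'}$ to normalize $\tau_1|_{M'}$) is set up so that the invariants $\Image(\tau_1)$ and $\Image(\tau_2)$ together with the identity-on-support and increasing-bijection conditions completely pin down the representative. Once this structure is in place, every claim reduces to elementary manipulations of partial permutations.
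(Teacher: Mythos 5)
Your proposal is correct, and it is exactly the verification the paper has in mind when it calls the lemma ``straightforward to check'': normalize $\tau_2$ by a column permutation so that it becomes the identity on its image $A$, use the residual stabilizer $S_{M'}$ to make $\tau_1|_{M'}$ the increasing bijection onto $L'$, and recover all the data $(r,p,q,J,M,M',I,L,L',\sigma)$ from the $S_n$-invariants $\Image(\tau_1)$, $\Image(\tau_2)$ and the supports. Since the paper omits the argument entirely, your write-up simply supplies the routine details, and both the existence and the uniqueness steps are sound.
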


\begin{corollary}
$ \displaystyle \# \dblFV / K = \sum_{n = r + p + q} \hspace*{-.3em} r! \dbinom{n}{r, p, q}^2 , \quad 
\dbinom{n}{r, p, q} = \dfrac{n!}{\,r!\,p!\,q!\,} .
$
\end{corollary}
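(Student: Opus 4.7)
The plan is to simply count the explicit representatives provided by Lemma~\ref{lemma:representatives.regTnxTn}. By Theorem~\ref{thm:orbits.in.dblFV.typeAIII}, we have a bijection $\dblFV/K \simeq \regTnxTn/S_n$, so it suffices to count representatives of the latter quotient. The lemma asserts that each representative is specified by the data of (i) a composition $n = r + p + q$, (ii) an ordered partition $[n] = J \sqcup M \sqcup M'$ with block sizes $(r,p,q)$, (iii) an ordered partition $[n] = I \sqcup L \sqcup L'$ with the same block sizes, and (iv) a bijection $\sigma : J \xrightarrow{\sim} I$. Moreover, distinct choices of this data yield distinct representatives.

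First I would count, for a fixed composition $(r,p,q)$, the number of ordered partitions of $[n]$ into labeled blocks of sizes $(r, p, q)$: this is exactly the multinomial coefficient $\binom{n}{r,p,q} = n!/(r!\,p!\,q!)$. Since we make two such partitions independently (one for the upper row $\tau_1$ and one for the lower row $\tau_2$), the combined count is $\binom{n}{r,p,q}^2$. For each fixed pair $(J, I)$ of $r$-element subsets, the number of bijections $\sigma : J \to I$ is $r!$. Multiplying and summing over all compositions $n = r + p + q$ gives the stated formula.

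The only step that requires any argument beyond bookkeeping is verifying that the parametrization in Lemma~\ref{lemma:representatives.regTnxTn} is bijective, not merely surjective onto $\regTnxTn/S_n$; but this is built into the statement of the lemma (``complete system of representatives'' meaning exactly one representative per $S_n$-orbit), so no further work is needed. Thus the corollary reduces to the elementary multinomial identity above, and there is no genuine obstacle.
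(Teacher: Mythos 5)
Your count is correct and follows exactly the approach the paper implicitly takes: the corollary is a direct enumeration of the representatives in Lemma~\ref{lemma:representatives.regTnxTn}, with the two multinomial coefficients from the two ordered set-partitions of $[n]$ and the factor $r!$ from the bijection $\sigma: J \to I$.
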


Let us return to the notation above, and consider $ \omega \in \regTnxTn $.  
Since $ \sigma : J \to I $ is a bijection, 
we can apply the classical Robinson-Schensted(-Knuth) algorithm (see \cite{Fulton}) 
to $ \sigma $ and get a pair of Young tableaux 
$ (T_1, T_2) = (\RSl(\sigma), \RSr(\sigma)) $ of the same shape, 
where $ I $ is the set of entries of $ T_1 $ and $ J $ is that of $ T_2 $.  

There is an algorithm called ``rectification'' of two Young tableaux $ T $ and $ U $ denoted as 
$ \Rect(T \ast U) $ (see \cite{Fulton}).  
We can rectify more than two tableaux at the same time, so we can write $ \Rect(T * U * V) $, for example.  
For a subset $ A \subset [n] $, let 
$ [A] $ be the unique vertical Young tableau of shape $ (1^{\#A}) $ with entries in $A$.  

\begin{theorem}\label{thm:gen.Steinberg.map}
Let $ \omega \in \regTnxTn $ be a representative specified in Lemma~\ref{lemma:representatives.regTnxTn}. 
Put $ \hatT_1 = \Rect([L'] * \!\transpose{\bigl(\RSl(\sigma)\bigr)} * [L]) $ and 
$ \hatT_2 = \Rect([M'] * \!\transpose{\bigl(\RSr(\sigma)\bigr)} * [M]) $, 
where $ \transpose{T} $ denotes the transpose of the tableau $ T $.  
Let $ \lambda = \shape{\hatT_1} $ and $ \mu = \shape{\hatT_2} $.  
Then the image of the generalized Steinberg map is given by 
$ \Phi^{\theta}(\omega) = (\lambda, \mu) $.
\end{theorem}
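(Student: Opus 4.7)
The plan is to compute $\Phi^\theta(\omega) = (\lambda,\mu)$ directly from its definition via the conormal bundle $T^*_{\mathcal{O}_\omega} \dblFV$, and then to identify the result with the shapes of $\hatT_1,\hatT_2$. First I would fix the canonical representative $(\lie{p}_\omega, \lie{b}_n^+, \lie{b}_n^-) \in \mathcal{O}_\omega$ afforded by Theorem~\ref{thm:orbits.in.dblFV.typeAIII}. The cotangent fiber of $\dblFV$ over this point is
\[
\nilradical{\lie{p}_\omega}\oplus\nilradical{\lie{b}_n^+}\oplus\nilradical{\lie{b}_n^-},
\]
and the vanishing of $\mu_{\dblFV}$ forces the $\lie{k}$-projection of $x\in\nilradical{\lie{p}_\omega}$ to sit in $\nilradical{\lie{b}_n^+}\oplus\nilradical{\lie{b}_n^-}$. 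Writing $x$ in $2\times 2$ block form with diagonal blocks $a$ and $d$ of size $n\times n$, the conormal condition becomes: $a$ is strictly upper triangular, $d$ is strictly lower triangular, $x[\omega]=0$, and $\Image(x)\subset[\omega]$. By definition, $\Phi^\theta(\omega) = (\shape{a},\shape{d})$ for a generic such $x$, so the task reduces to computing these two Jordan types in terms of the combinatorics of $\omega$.

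The main step is a reduction to the two special cases treated in Theorems 9.1 and 10.4 of \cite{Fresse.N.2020}, where one of $\tau_1,\tau_2$ is already a permutation. The idea is to enlarge $\omega$ to an element $\widetilde{\omega}$ in the analogue of $\regTnxTn$ for some $n+s$, obtained by adjoining auxiliary rows and columns so that its bottom row becomes a genuine permutation. This fits into the orbit embedding framework of Section~\ref{sec:abstract.embedding.theory}, which guarantees that the $K$-orbit $\mathcal{O}_\omega$ sits inside a larger $\widetilde{K}$-orbit and that the corresponding conormal bundles are compatible. The algorithm for $\widetilde{\omega}$ then outputs a pair of tableaux computed by ordinary RSK applied to a permutation that contains $\sigma$ together with additional entries determined by $M,M',L,L'$. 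The restriction of Jordan types from the larger flag variety back to $\dblFV$ corresponds combinatorially to jeu-de-taquin slides that absorb the placeholder entries into the shape, which is precisely the content of $\Rect([L'] * \transpose{(\RSl(\sigma))} * [L])$ and $\Rect([M'] * \transpose{(\RSr(\sigma))} * [M])$.

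The equality of shapes can then be verified using Greene's theorem together with a parallel rank formula for Jordan types: the truncated sum $\lambda_1 + \dots + \lambda_k$ equals $n - \dim \kernel(a^k) + \dim\kernel(a^{k-1})$ at a generic $x$ (and similarly for $\mu$ and $d$), while the shape of a rectification is characterized by lengths of increasing unions of chains in the reading word. Both invariants can be read off from the combinatorial data $(\sigma, J, M, M', I, L, L')$ and matched directly. The main obstacle is precisely this alignment: showing that the geometric reduction from a general $\omega$ to the ``one-side-a-permutation'' base cases mirrors, step by step, the jeu-de-taquin slides that define $\Rect$. I expect that a careful analysis of how the kernels of powers of $a$ and $d$ evolve under the embedding of Section~\ref{sec:abstract.embedding.theory}, combined with the explicit block-matrix representatives of Lemma~\ref{lemma:representatives.regTnxTn}, provides the cleanest route.
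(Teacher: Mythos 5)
The paper explicitly states that the proof of Theorem~\ref{thm:gen.Steinberg.map} is not given here and will appear elsewhere, so there is no in-text proof to compare against; your proposal must be judged on its own merits.

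Your first paragraph is a correct setup: the cotangent fiber is $\nilradical{\lie{p}_\omega}\oplus\nilradical{\lie{b}_n^+}\oplus\nilradical{\lie{b}_n^-}$, the conormal condition on $x = \mattwo{a}{b}{c}{d}$ is that $a$ is strictly upper triangular, $d$ is strictly lower triangular, $x[\omega]=0$ and $\Image(x)\subset[\omega]$, and $\Phi^\theta(\omega)$ is the pair of Jordan types of $(a,d)$ at a generic such $x$. The problem is the reduction in your second paragraph, which rests on two claims that do not hold.

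First, the proposed enlargement $\omega\rightsquigarrow\widetilde\omega$ from $\regTnxTn$ to the analogous set in size $n+s$ does not fit the orbit embedding framework of Section~\ref{sec:abstract.embedding.theory}. That framework requires two commuting involutions $\sigma,\theta$ on a reductive group $\bbG$ with $G = \bbG^\sigma$, and Theorems~\ref{thm:embedding.orbits} and \ref{thm:embedding.conormal.bundles} only apply under that hypothesis (plus conditions (A) and (B) on square roots). There is no involution of $\GL_{2(n+s)}$, commuting with the relevant $\theta$, whose fixed-point subgroup is $\GL_{2n}$; the inclusion $\GL_{2n}\hookrightarrow\GL_{2(n+s)}$ is simply not of fixed-point type, so the embedding theorems do not apply to the $n\mapsto n+s$ enlargement you describe.

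Second, and independently, even where the embedding framework does apply, the compatibility of orbit embeddings with the Steinberg maps is stated in the paper as \emph{Conjecture}~\ref{conj:question.compatibility}, which is explicitly left open: the paper remarks that functoriality of the constructions is not sufficient to yield commutativity of the diagram, and Lemma~\ref{lemma:dense.U.proves.commutativity} gives only a sufficient condition that the authors say they cannot verify in general. Your proposal treats the compatibility of conormal bundles and Steinberg maps under embedding as automatic ("which guarantees that \dots the corresponding conormal bundles are compatible") and builds the rest of the argument on it. Finally, the claim that the geometric restriction "corresponds combinatorially to jeu-de-taquin slides that absorb the placeholder entries into the shape" is precisely the content one needs to prove; asserting it does not close the gap. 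Without a direct computation of the generic Jordan types of $a$ and $d$ from the block structure imposed by the representative $\omega$ of Lemma~\ref{lemma:representatives.regTnxTn} (or a genuine, justified reduction to the permutation cases of \cite{Fresse.N.2020}), the proposal does not constitute a proof.
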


\begin{remark}
In \cite{Fresse.N.2020}, we gave an algorithm for $ \omega = \vectwo{\tau_1}{1_n} $.  
The formula given here is slightly different from it since one of the Borel subgroups is twisted to the lower triangular one.  
\end{remark}

Note that the above theorem clarifies the fiber of $ \Phi^{\theta} $ and 
gives a different parametrization of $ \dblFV / K $ in terms of standard tableaux.  
We will describe it shortly.  
If a pair of partitions $ \lambda, \nu $ satisfy  
$ \nu_i \leq \lambda_i \leq \nu_i + 1 $ for any $ i \geq 1 $, 
we write $ \nu \vdash \lambda $.  
This is equivalent to saying that $ \nu \subset \lambda $ and 
$ \lambda \setminus \nu $ is a vertical strip.  
Put 
\begin{equation*}
\begin{aligned}
\Upsilon(r,p,q) &= \{ (\lambda, \mu; \lambda', \mu'; \nu) \mid 
\lambda, \mu \in \partitionsof{n}, \; 
\lambda', \mu' \in \partitionsof{r + p}, \; \nu \in \partitionsof{r}, 
\\
& \qquad\qquad \qquad\qquad   \qquad\qquad  
\nu \vdash \lambda' \vdash \lambda, \text{ and } 
\nu \vdash \mu' \vdash \mu \} .
\end{aligned}
\end{equation*}
By the construction, it is easy to see that there exists a bijection 
\begin{equation*}
\genRS : \regTnxTn / S_n \xrightarrow{\;\;\sim\;\;}
\hspace*{-1.5em}
\coprod_{\begin{smallmatrix}
n = r + p + q
\\
(\lambda, \mu; \lambda', \mu'; \nu) \in \Upsilon(r, p, q)
\end{smallmatrix}}
\hspace*{-2em} 
\{ (\hatT_1, \hatT_2; \lambda', \mu'; \nu) \mid 
(\hatT_1, \hatT_2) \in \STab_{\lambda} {\times} \STab_{\mu} \} .
\end{equation*}
We call this bijection $ \genRS $ the \emph{generalized Robinson-Schensted correspondence}.
As before, we get a commutative diagram
\begin{equation*}
\xymatrix  @M 3pt @R 8ex{
\dblFV / K \simeq \regTnxTn /S_n \ar[drr]_{\Phi^{\theta}} \ar@{->}[rr]^-{\sim}_-{\genRS} & & \coprod\limits_{(\star)} 
\{ (\hatT_1, \hatT_2; \lambda', \mu'; \nu) \} \ar[d] & 
\makebox[0ex][c]{\qquad$\ni (\hatT_1, \hatT_2; \lambda', \mu'; \nu)$} \ar@{|->}[d]\\
& & \partitionsof{n}^2 & \makebox[0ex][c]{$\ni (\lambda, \mu) $\qquad} }
\end{equation*}
Here $ (\star) $ represents the condition 
$ n = r + p + q , \; (\lambda, \mu; \lambda', \mu'; \nu) \in \Upsilon(r, p, q) $, 
and $ (\hatT_1, \hatT_2) \in \STab_{\lambda} \times \STab_{\mu} $.

For the formula giving $ \Phi^{-\theta} $, 
we refer the readers to \cite{Fresse.N.2020}.  
In the quoted paper, we only treated the case where 
$ \omega = \vectwo{\tau\;}{1_n} $.  
A complete algorithm will appear elsewhere.

\section{Embedding of the orbits in double flag varieties}\label{sec:abstract.embedding.theory}

In this section, we return back to the situation of 
\S~\ref{S1:Steinberg.theory.for.symmetric.pairs},  
and we do not assume the finiteness of the number of $ K $-orbits in $ \dblFV $ nor 
Assumption~\ref{assumption:exotic.Steinberg.map.to.nilpotents}.

Thus $ (G, K) $ is a symmetric pair and $ P, Q $ are parabolic subgroups 
of $ G $ and $ K $ respectively.  
Sometimes a double flag variety $ \dblFV = G/P \times K/Q $ can be embedded 
into a larger double flag variety.  In this respect, we need a more precise setting.  

Let $ \bbG $ be a connected reductive algebraic group over $ \C $, 
and let $ \theta, \sigma \in \Aut \bbG $ be involutions 
which commute: $ \theta \sigma = \sigma \theta $.  
The fixed point subgroup of $ \theta $ is denoted by $ \bbK = \bbG^{\theta} $ 
so that $ ( \bbG, \bbK ) $ is a symmetric pair.  
Assume that $ G $ is the fixed point subgroup of $ \sigma $, 
$ G = \bbG^{\sigma} $.  
Since $ \theta $ and $ \sigma $ commute, 
$ G $ is stable under $ \theta $ and 
$ \bbK $ is stable under $ \sigma $.
We put 
$ K = G^{\theta} = \bbK^{\sigma} = G \cap \bbK $, 
which is a symmetric subgroup of both $ G $ and $ \bbK $.
\begin{equation*}
\begin{array}{ccc}
\bbK & \subset & \bbG 
\\
\text{\rotatebox[origin=c]{-90}{$\supset$}} & & \text{\rotatebox[origin=c]{-90}{$\supset$}} 
\\
K & \subset & G 
\end{array}
\end{equation*}
\indent
For simplicity, 
we assume that all the subgroups $ G, K $ and $ \bbK $ are connected.

\subsection{Embedding of double flag varieties}

For a parabolic subgroup $ P $ of $ G $, 
we can choose a $ \sigma $-stable parabolic subgroup $ \bbP $ of $ \bbG $ 
which cuts out $ P $ from $ G $, i.e., 
$ P = \bbP^{\sigma} = \bbP \cap G $.  
Similarly, for a parabolic subgroup $ Q $ of $ K $, 
we can choose a $ \sigma $-stable parabolic subgroup\footnote{
In most of the literature, the letter $ \bbQ $ is reserved to denote 
the field of rational numbers.  However, in this paper, we consider everything over 
the complex number field $ \C $ and actually the rational number field does play no rule.  
For this reason, we choose the letter $ \bbQ $ for a parabolic subgroup of $ \bbK $.
}
$ \bbQ $ of $ \bbK $ 
which satisfies 
$ Q = \bbQ^{\sigma} = \bbQ \cap K $.

Denote by $ \bbdblFV = \bbG / \bbP \times \bbK / \bbQ $ a double flag variety 
for $ ( \bbG, \bbK ) $.  
Then $ \dblFV = G/P \times K/Q $ is embedded into $ \bbdblFV $ in a natural manner.  
The embedding 
$ \dblFV \hookrightarrow \bbdblFV $ sends 
$ (g P, k Q) \; (g \in G, k \in K) $ to $ (g \bbP, k \bbQ) $, 
and we do not distinguish $ \dblFV $ from its embedded image in $ \bbdblFV $.  

By abuse of notation, let $ \sigma $ also denote the automorphism of $ \bbdblFV $ defined by 
$ 
\sigma (\bbg \cdot \bbP, \bbk \cdot \bbQ) 
= (\sigma(\bbg) \cdot \bbP, \sigma(\bbk) \cdot \bbQ) 
$ so that 
$ \sigma(h \cdot \bbx) = \sigma(h) \cdot \sigma(\bbx) $ holds 
for $ h \in \bbK $ and $ \bbx \in \bbdblFV $.

For a $ \sigma $-stable subgroup $ \bbH \subset \bbG $, 
we will denote 
\begin{equation*}
\bbH^{-\sigma} = \{ h \in \bbH \mid \sigma(h) = h^{-1} \} .
\end{equation*}

\begin{definition}\label{def:-sigma.square.root}
For a $ \sigma $-stable subgroup $ \bbH $ of $ \bbG $, 
we say that \emph{$ \bbH $ admits $ (-\sigma) $-square roots}  
if for any $ h \in \bbH^{- \sigma} $ there exists an $ f \in \bbH^{-\sigma} $ 
which satisfies $ h = f^2 $.
\end{definition}

\begin{lemma}
If $ \bbP $ and $ \bbQ $ admit $ (-\sigma) $-square roots, then
$ \dblFV $ 
coincides with the fixed point set 
$ \bbdblFV^{\sigma} := \{ \bbx \in \dblFV \mid \sigma(\bbx) = \bbx \} $.  
\end{lemma}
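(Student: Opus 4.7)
The plan is to verify the two inclusions $\dblFV \subseteq \bbdblFV^{\sigma}$ and $\bbdblFV^{\sigma} \subseteq \dblFV$ separately, working factor by factor.

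The first inclusion is immediate from the construction. A point $(gP,kQ) \in \dblFV$ with $g \in G$ and $k \in K$ embeds in $\bbdblFV$ as $(g\bbP,k\bbQ)$, and since $\bbP,\bbQ$ are $\sigma$-stable while $\sigma$ fixes every element of $G = \bbG^{\sigma}$ and of $K = \bbK^{\sigma}$, one has $\sigma(g\bbP,k\bbQ) = (\sigma(g)\bbP,\sigma(k)\bbQ) = (g\bbP,k\bbQ)$.

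For the nontrivial inclusion I would treat each factor on its own. Take $g\bbP \in (\bbG/\bbP)^{\sigma}$ with $g \in \bbG$; the fixed-point condition means exactly that $h := g^{-1}\sigma(g) \in \bbP$. A direct calculation gives $\sigma(h) = \sigma(g)^{-1} g = h^{-1}$, so in fact $h \in \bbP^{-\sigma}$. Using the hypothesis, pick $f \in \bbP^{-\sigma}$ with $f^{2} = h$ and set $g' := g f$. Then
\begin{equation*}
\sigma(g') \;=\; \sigma(g)\,\sigma(f) \;=\; (g h)\, f^{-1} \;=\; g f^{2} f^{-1} \;=\; g f \;=\; g',
\end{equation*}
so $g' \in \bbG^{\sigma} = G$ and $g'\bbP = g\bbP$. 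Since $P = \bbP \cap G$ by construction, the natural embedding $G/P \hookrightarrow \bbG/\bbP$ has image $G\bbP/\bbP$, and hence $g\bbP$ lies in $G/P$. The same argument, run inside $\bbK$ and using the $(-\sigma)$-square root hypothesis for $\bbQ$, produces $k' \in K$ with $k'\bbQ = k\bbQ$. Assembling the two factors, an arbitrary $\sigma$-fixed point of $\bbdblFV$ is represented by an element of $G/P \times K/Q = \dblFV$.

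No step is really an obstacle once the setup is in place; the whole argument pivots on the observation that the coset cocycle $g^{-1}\sigma(g)$ lands in $\bbP^{-\sigma}$, where the hypothesis applies. The one piece of bookkeeping to watch is the sign convention: it is the condition $\sigma(f) = f^{-1}$, rather than $\sigma(f) = f$, that makes the correction $g \mapsto g f$ convert $g^{-1}\sigma(g) = f^{2}$ into $\sigma(g') = g'$. Note that hypothesis~(B) of the paper is not used here; it will be needed later when comparing $K$-orbits on $\dblFV$ with $\bbK$-orbits on $\bbdblFV$, not merely individual points.
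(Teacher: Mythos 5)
Your proof is correct and follows essentially the same route as the paper: identify the cocycle $\bbg^{-1}\sigma(\bbg)\in\bbP^{-\sigma}$, extract a $(-\sigma)$-square root $f$, and replace $\bbg$ by $\bbg f$ to land in $G=\bbG^{\sigma}$ without changing the coset, then repeat inside $\bbK$ for the second factor. The only difference is that you spell out the verification $\sigma(h)=h^{-1}$, which the paper leaves implicit.
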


\begin{proof}
The inclusion $ \dblFV \subset \bbdblFV^{\sigma} $ is clear.  
For the other inclusion, 
take $ (\bbg \bbP, \bbk \bbQ)  \in \bbdblFV^{\sigma} $.  
Since $ (\bbg \bbP, \bbk \bbQ)  = \sigma(\bbg \bbP, \bbk \bbQ) 
= (\sigma(\bbg) \bbP, \sigma(\bbk) \bbQ) $, 
we get 
$ \bbg^{-1} \sigma(\bbg) \in \bbP $ and 
$ \bbk^{-1} \sigma(\bbk) \in \bbQ $.  
This means $ \bbg^{-1} \sigma(\bbg) \in \bbP^{-\sigma} $, 
hence by the assumption there exists 
an $ f \in \bbP^{-\sigma} $ such that $ \bbg^{-1} \sigma(\bbg) = f^2 $.  
From this, we get 
$ \bbg f = \sigma(\bbg) f^{-1} = \sigma(\bbg f) $.  
Therefore we conclude that
$ \bbg f \in \bbG^{\sigma} = G $, and 
$ \bbg \bbP = (\bbg f) \bbP $ belongs to $ G/P \subset \bbG / \bbP $.  

Similarly we get 
$ \bbk \bbQ = (\bbk h) \bbQ $ for some $ h \in \bbQ^{-\sigma} $ and 
$ \bbk h \in K $.  
Thus we get 
$ (\bbg \bbP, \bbk \bbQ) = ((\bbg f) \bbP, (\bbk h) \bbQ) \in X $.
\end{proof}

\subsection{Embedding of orbits}\label{subsec:embedding.orbits}

Since the embedding $ \dblFV \hookrightarrow \bbdblFV $ is $ K $-equivariant, 
it induces a natural orbit map defined by 
\footnote{
In the previous sections, we always denote a $ K $-orbit in $ \dblFV $ by 
using a black board bold letter like $ \orbit $.  
However, in this section, we take advantage of systematic notation.
}
\begin{equation*}
\xymatrix @R -6ex @C -6ex @L 5pt
{ \iota : & \dblFV / K \ar[rr] & \qquad \qquad & \bbdblFV / \bbK
\\
& \text{\rotatebox[origin=c]{-90}{$\ni$}} &  & \text{\rotatebox[origin=c]{-90}{$\ni$}} 
\\
& O \ar@{|->}[rr] & & \makebox[1em][l]{\,$ \orbit = \bbK \cdot O $}
} .
\end{equation*}
There is no reason to expect that $ \iota $ is an embedding, i.e., 
to conclude $ O = \orbit \cap \dblFV $.  
In this setting, 
there is a criterion which assures that $ \iota $ is an embedding.  
We quote Assumption 3.1 from \cite{Nishiyama.2014} (with terminology re-adapted):  
``\emph{For any $ x \in \dblFV $, $ \Stab_{\bbK}(x) $ admits $ (-\sigma) $-square roots.}''

To translate the condition in the present setting, 
let $ x = (g \bbP, k \bbQ) =: (\bbP_1, \bbQ_1) $ be a point in $ \dblFV $.  
Note that $ \bbP_1 $ and $ \bbQ_1 $ are $ \sigma $-stable parabolic subgroups 
of $ \bbG $ and $ \bbK $ respectively.  
Then 
$ \Stab_{\bbK}(x) = \bbP_1 \cap \bbQ_1 $.  
Thus the above assumption can be rephrased as 
``\emph{$ \bbP_1 \cap \bbQ_1 $ admits $ (-\sigma) $-square roots.}''

Let us summarize the situation: 
Let $ \bbG, \bbK = \bbG^{\theta}, G = \bbG^{\sigma}, K = \bbK^{\sigma} = G^{\theta} $ be 
as above and assume they are all connected. 
Let $ P $ and $ Q $ be parabolic subgroups of $ G $ and $ K $ respectively and 
choose $ \sigma $-stable parabolic subgroups $ \bbP \subset \bbG $ and 
$ \bbQ \subset \bbK $ such that 
$ P = \bbP \cap G $ and $ Q = \bbQ \cap K $. 
We consider the following two conditions:

\begin{itemize}
\item[(A)] 
$ \bbP $ and $ \bbQ $ admit $ (-\sigma) $-square roots (see Definition~\ref{def:-sigma.square.root}).

\item[(B)]
For any $ \sigma $-stable parabolic subgroups $ \bbP_1 \subset \bbG $ and 
$ \bbQ_1 \subset \bbK $ which are conjugate to $ \bbP $ and $ \bbQ $ respectively, 
the intersection $ \bbP_1 \cap \bbQ_1 $ admits $ (-\sigma) $-square roots.
\end{itemize}

\begin{theorem}\label{thm:embedding.orbits}
In the above setting, 
let us consider the double flag varieties 
$ \dblFV = G/P \times K/Q $ and $ \bbdblFV = \bbG / \bbP \times \bbK / \bbQ $.  
Then the conditions {\upshape (A)} and {\upshape (B)} imply the following.  
\begin{thmenumerate}
\item
There is a natural embedding $ \dblFV \hookrightarrow \bbdblFV $ 
defined by 
$ (g P, k Q) \mapsto (g \bbP, k \bbQ) $  
for any $ g \in G $ and $ k \in K $.
The involutive automorphism 
$ \sigma \in \Aut \bbdblFV $ given by 
$ \sigma(\bbg \bbP, \bbk \bbQ) = (\sigma(\bbg) \bbP, \sigma(\bbk) \bbQ) 
\; (\bbg \in \bbG, \bbk \in \bbK) $ 
is well defined and 
$ \dblFV = \bbdblFV^{\sigma} $ holds.  

\item
The natural orbit map 
$ \iota : \dblFV / K \to \bbdblFV / \bbK $ defined by 
$ \iota(O) = \bbK \cdot O $  for $ O \in \dblFV / K $ is injective, 
i.e., if we put $ \orbit = \iota(O) $, then it holds $ O = \orbit \cap \dblFV = \orbit^{\sigma} $.

\item
For any $ \bbK $-orbit $ \orbit $ in $ \bbdblFV $, 
the intersection $ \orbit \cap \dblFV $ is either empty or a single $ K $-orbit.
\end{thmenumerate}
\end{theorem}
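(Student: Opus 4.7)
The plan is to dispose of (1) quickly, observe that (2) and (3) are equivalent reformulations, and then establish (3) by the standard non-abelian cocycle argument: given two $\sigma$-fixed points in the same $\bbK$-orbit, the obstruction to conjugating them by an element of $K=\bbK^{\sigma}$ is an element of the stabilizer that is ``$(-\sigma)$-type'', and condition (B) kills exactly this obstruction by extracting a square root.

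For (1), I would first check the map $(gP,kQ)\mapsto(g\bbP,k\bbQ)$ is well-defined and injective using that $P=\bbP\cap G$ and $Q=\bbQ\cap K$: if $g_1\bbP=g_2\bbP$ with $g_i\in G$, then $g_2^{-1}g_1\in \bbP\cap G=P$, and analogously on the $\bbK$-side. The involution $\sigma$ descends to $\bbdblFV$ precisely because $\bbP,\bbQ$ are $\sigma$-stable. The equality $\dblFV=\bbdblFV^{\sigma}$ is then exactly the preceding lemma, which invokes condition (A).

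For parts (2) and (3), I would note they are equivalent: if every non-empty intersection $\orbit\cap\dblFV$ is a single $K$-orbit, injectivity of $\iota$ is immediate; conversely, if $\iota$ is injective and $\orbit\cap\dblFV\neq\emptyset$, then any $K$-orbit contained in $\orbit\cap\dblFV$ maps to $\orbit$ under $\iota$, forcing uniqueness. So it suffices to prove (3). Fix $\orbit$ with $\orbit\cap\dblFV\neq\emptyset$ and pick $x,y\in\orbit\cap\dblFV$. Writing $y=\bbk\cdot x$ with $\bbk\in\bbK$ and applying $\sigma$ (using $\sigma(x)=x$, $\sigma(y)=y$) gives $\bbk^{-1}\sigma(\bbk)\in\Stab_{\bbK}(x)$; a direct check shows $\bbk^{-1}\sigma(\bbk)\in\bbK^{-\sigma}$, hence it belongs to $(\Stab_{\bbK}(x))^{-\sigma}$.

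The crux is now to apply condition (B) to $\Stab_{\bbK}(x)$. Writing $x=(g\bbP,k\bbQ)$ with $g\in G$, $k\in K$ (possible by part (1)), I would identify $\Stab_{\bbK}(x)=\bbP_1\cap\bbQ_1$, where $\bbP_1:=g\bbP g^{-1}\subset\bbG$ and $\bbQ_1:=k\bbQ k^{-1}\subset\bbK$ are parabolic subgroups conjugate to $\bbP$ and $\bbQ$; both are $\sigma$-stable because $g,k$ come from the $\sigma$-fixed groups $G,K$. Condition (B) then supplies $f\in(\bbP_1\cap\bbQ_1)^{-\sigma}$ with $f^{2}=\bbk^{-1}\sigma(\bbk)$. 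Setting $k_{0}:=\bbk f$, one computes $\sigma(k_{0})=\sigma(\bbk)f^{-1}=\bbk\cdot\bbk^{-1}\sigma(\bbk)\cdot f^{-1}=\bbk f=k_{0}$, so $k_{0}\in\bbK^{\sigma}=K$, and $k_{0}\cdot x=\bbk\cdot x=y$ because $f$ fixes $x$. Hence $y\in K\cdot x$, proving $\orbit\cap\dblFV$ is a single $K$-orbit.

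The main obstacle I anticipate is bookkeeping rather than conceptual: ensuring that the stabilizer $\Stab_{\bbK}(x)$ genuinely has the form $\bbP_{1}\cap\bbQ_{1}$ with $\bbP_{1}\subset\bbG$ and $\bbQ_{1}\subset\bbK$ conjugate to $\bbP,\bbQ$ respectively (so that hypothesis (B) is literally applicable), and keeping track of which ambient group each element lives in during the cocycle computation. Once the setup is aligned correctly, the square-root extraction in (B) is precisely the vanishing-of-$H^{1}$ statement needed to trivialize the cocycle $\bbk^{-1}\sigma(\bbk)$.
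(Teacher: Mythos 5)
Your proposal is correct and follows essentially the same route as the paper: the paper likewise handles (1) via the lemma that condition (A) gives $\dblFV=\bbdblFV^{\sigma}$, identifies $\Stab_{\bbK}(x)=\bbP_1\cap\bbQ_1$ for $x\in\dblFV$ so that condition (B) is exactly the square-root hypothesis on stabilizers, and then concludes (2)--(3) by citing Theorem~3.2 of \cite{Nishiyama.2014}. The cocycle-trivialization argument you write out (extracting $f$ with $f^2=\bbk^{-1}\sigma(\bbk)$ and replacing $\bbk$ by $\bbk f\in K$) is precisely the content of that cited theorem, so your version is simply a self-contained unwinding of the paper's reference.
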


As we have already explained above, this theorem follows from \cite[Theorem 3.2]{Nishiyama.2014}.  
We make emphasis on the fact that we do not need finiteness of the orbits.  

If we take $ Q = K $ and $ \bbQ = \bbK $, 
we get the following corollary, 
which is known to experts on the basis of case-by-case analysis.

\begin{corollary}\label{cor:embedding.K.orbits.in.flag.variety}
Let $ \bbP $ be a $ \sigma $-stable parabolic subgroup of $ \bbG $, which admits $ (-\sigma) $-square roots.  
Assume that, for any $ \sigma $-stable parabolic subgroup $ \bbP_1 \subset \bbG $ conjugate to $ \bbP $, 
the intersection $ \bbP_1 \cap \bbK $ admits $ (-\sigma) $-square roots.
Then the natural embedding map 
$ G/P \to \bbG/\bbP $ induces an embedding of orbits 
$ K \backslash G/P \to \bbK \backslash \bbG/\bbP $, i.e., 
for any $ \bbK $-orbit $ \orbit $ in $ \bbG/\bbP $, 
the intersection $ \orbit \cap G/P $ is either empty or a single $ K $-orbit.
\end{corollary}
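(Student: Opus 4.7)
The plan is to deduce the corollary directly from Theorem~\ref{thm:embedding.orbits} by specializing to the degenerate choice $Q = K$ and $\bbQ = \bbK$, viewed as improper parabolic subgroups of themselves.

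With this choice, the quotients $K/Q$ and $\bbK/\bbQ$ are single points, so the double flag varieties collapse: $\dblFV \simeq G/P$ and $\bbdblFV \simeq \bbG/\bbP$ as $K$- and $\bbK$-varieties respectively. Accordingly, $\dblFV/K$ identifies with the double coset space $K \bsl G/P$, and likewise $\bbdblFV/\bbK$ with $\bbK\bsl\bbG/\bbP$. The natural embedding $\dblFV \inclusion \bbdblFV$ furnished by Theorem~\ref{thm:embedding.orbits} restricts to $G/P \inclusion \bbG/\bbP$, and the orbit map $\iota$ is identified with the map $K\bsl G/P \to \bbK \bsl \bbG/\bbP$ that we wish to show is injective.

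It remains to verify the hypotheses (A) and (B) of Theorem~\ref{thm:embedding.orbits}. For (A): the condition on $\bbP$ is precisely the first hypothesis of the corollary; the condition on $\bbQ = \bbK$ is not assumed, but in this degenerate case it becomes vacuous, since the role of $(-\sigma)$-square roots of $\bbQ$ in the proof of Theorem~\ref{thm:embedding.orbits} is to move a $\sigma$-fixed coset $\bbk \bbQ$ to one with a representative in $K$, and when $\bbQ = \bbK$ the unique coset is already represented by $1 \in K$. For (B): any $\sigma$-stable parabolic subgroup $\bbQ_1 \subset \bbK$ conjugate to $\bbK$ must equal $\bbK$ itself, so condition (B) collapses to the statement that $\bbP_1 \cap \bbK$ admits $(-\sigma)$-square roots for every $\sigma$-stable parabolic $\bbP_1 \subset \bbG$ conjugate to $\bbP$, which is exactly the second hypothesis.

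The main point requiring care is the remark that condition (A) on $\bbQ$ is vacuous in this degenerate setting; this is a one-line inspection of the corresponding step in the proof of Theorem~\ref{thm:embedding.orbits} rather than a genuine difficulty. Granting it, Theorem~\ref{thm:embedding.orbits} yields directly that for every $\bbK$-orbit $\orbit$ in $\bbG/\bbP$ the intersection $\orbit \cap G/P$ is either empty or a single $K$-orbit, as claimed.
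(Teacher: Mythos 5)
Your proof is correct and takes the same route as the paper, which introduces the corollary with the single sentence ``If we take $Q=K$ and $\bbQ=\bbK$, we get the following corollary.'' The one place where you go beyond what the paper says is a genuine improvement in care: the corollary's hypotheses do \emph{not} literally include the part of condition~(A) asking that $\bbQ=\bbK$ admit $(-\sigma)$-square roots, and you correctly observe that this is harmless. The only use of the $\bbQ$-half of (A) in the embedding lemma is to upgrade a $\sigma$-fixed coset $\bbk\bbQ$ to one with a representative in $K$, and since $\bbK/\bbQ=\bbK/\bbK$ is a single point (already represented by $1\in K$) nothing is needed there. Your reduction of (B) to the corollary's second hypothesis (using that the only parabolic of $\bbK$ conjugate to $\bbK$ is $\bbK$ itself) is likewise exactly right. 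So the argument is sound, and slightly more explicit than the paper's terse remark.
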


In this way, the classification of the $ K $-orbits in 
the partial flag variety $ G/P $ reduces to that of $ \bbK $-orbits in $ \bbG/\bbP $ 
together with the determination of the subset of orbits with non-empty intersection with $ G/P $.  
Note that, in this case, both $ K \backslash G/P $ and $ \bbK \backslash \bbG/\bbP $ 
are finite sets.

\subsection{Embedding of conormal bundles}

We fix a nondegenerate invariant bilinear form on 
$ \lie{G} := \Lie(\bbG) $ which is also invariant under involutions $ \sigma $ and $ \theta $.  
Certainly it always exists and by restrictions it descends to 
nondegenerate bilinear forms on $ \lie{g}, \lie{k} $ and $ \lie{K} := \Lie(\bbK) $.  
By this invariant form, we identify $ \lie{G} $ and $ \lie{G}^{\ast} $, 
and 
similar identifications take place for $ \lie{g}, \lie{k} $ and $ \lie{K} $.  
In the following, we will use German capital letters for the Lie algebras of 
the groups denoted by black board bold letters.

Let us consider the cotangent bundles 
$ T^*\dblFV $ and $ T^*\bbdblFV $.  
Since 
$ T^*\bbdblFV 
\simeq (\bbG \times_{\bbP} \nilradical{\lie{P}}) 
\times (\bbK \times_{\bbQ} \nilradical{\lie{Q}}) $ 
by the identification $ \lie{G} \simeq \lie{G}^{\ast} $ and $ \lie{K} \simeq \lie{K}^{\ast} $, 
we have a natural extension of $ \sigma \in \Aut \bbdblFV $ to the whole $ T^*\bbdblFV $.  
We prove

\begin{proposition}
Let us assume the condition {\upshape(A)} in {\upshape\S~\ref{subsec:embedding.orbits}}. 
Then $ (T^*\bbdblFV)^{\sigma} = T^*\dblFV $ holds.
\end{proposition}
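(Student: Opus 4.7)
The plan is to exploit the product structure of $T^*\bbdblFV$ and reduce, fiber by fiber, to identifying the $\sigma$-fixed part of each cotangent nilradical. First I will write a point of $T^*\bbdblFV$ as a quadruple $((\bbP_1, X), (\bbQ_1, Y))$ with $\bbP_1 \in \bbG/\bbP$, $X \in \nilradical{\bbP_1} \subset \lie{G}$, and analogously for $(\bbQ_1, Y)$ inside $\bbK$. The extension of $\sigma$ to $T^*\bbdblFV$ acts on such a quadruple coordinate by coordinate, via $\sigma$ on the groups and $d\sigma$ on the Lie algebras. Fixing a quadruple therefore amounts to four conditions: $\bbP_1$ and $\bbQ_1$ are $\sigma$-stable, and $X$, $Y$ are $\sigma$-fixed in their respective nilradicals.

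Next I invoke the previous lemma (which uses condition (A)) to conclude that $\bbdblFV^\sigma = \dblFV$. In particular, any $\sigma$-stable $\bbP_1$ in the $\bbG$-conjugacy class of $\bbP$ has the form $g\bbP$ with $g \in G$, so $P_1 := \bbP_1 \cap G = \bbP_1^\sigma$ is a parabolic subgroup of $G$ in the conjugacy class of $P$. An analogous statement holds for $\bbQ_1 \subset \bbK$ and $Q_1 \subset K$.

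The key remaining step is the fiberwise identification $(\nilradical{\bbP_1})^\sigma = \nilradical{\lie{p}_1}$. I will choose a $\sigma$-stable Levi decomposition $\bbP_1 = \bbL_1 \ltimes \bbU_1$ (which exists since $\bbP_1$ is $\sigma$-stable). Then $(\bbL_1)^\sigma$ is reductive, while $(\bbU_1)^\sigma$ is a connected unipotent normal subgroup of $P_1 = (\bbL_1)^\sigma \cdot (\bbU_1)^\sigma$; this forces $(\bbU_1)^\sigma$ to be the unipotent radical of $P_1$, whose Lie algebra is $\nilradical{\lie{p}_1}$. Differentiating yields $(\nilradical{\bbP_1})^\sigma = \Lie((\bbU_1)^\sigma) = \nilradical{\lie{p}_1}$. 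Running the same argument inside $\bbK$ gives $(\nilradical{\bbQ_1})^\sigma = \nilradical{\lie{q}_1}$.

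The main obstacle lies precisely in this last step: one must invoke the existence of a $\sigma$-stable Levi decomposition for a $\sigma$-stable parabolic subgroup, together with the reductivity of $(\bbL_1)^\sigma$. Both are classical results for involutions acting on connected reductive groups over $\C$ and can be referenced rather than re-proved. Once these are in place, the four pointwise $\sigma$-fixedness conditions collapse to the assertion that our quadruple gives a point of $T^*(G/P) \times T^*(K/Q) = T^*\dblFV$, which is the desired equality.
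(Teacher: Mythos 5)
Your argument is correct, but it takes a genuinely different route from the paper's. The paper stays in the associated-bundle model $ T^*\bbdblFV \simeq (\bbG \times_{\bbP} \nilradical{\lie{P}}) \times (\bbK \times_{\bbQ} \nilradical{\lie{Q}}) $ and applies condition (A) directly: for a $ \sigma $-fixed class $ [\bbg, \bbu] $ one gets $ \bbg^{-1}\sigma(\bbg) \in \bbP^{-\sigma} $, and a $ (-\sigma) $-square root $ f $ lets one change the representative to $ [\bbg f, f^{-1}\bbu] $, which is checked to lie in $ G \times_P \nilradical{\lie{p}} $; this single computation handles the base point and the covector at once. You instead work in the incidence model $ \{ (\lie{P}_1, \bbu) \mid \bbu \in \nilradical{\lie{P}_1} \} $, where $ \sigma $-fixedness really is coordinatewise (no representative ambiguity), push the base point into $ \dblFV $ by the preceding lemma (this is where (A) enters for you), and then prove the fiberwise identity $ (\nilradical{\lie{P}_1})^{\sigma} = \nilradical{\lie{p}_1} $ via a $ \sigma $-stable Levi decomposition $ \bbP_1 = \bbL_1 \ltimes \bbU_1 $, reductivity of $ \bbL_1^{\sigma} $, and connectedness of $ \bbU_1^{\sigma} $ in characteristic zero — classical facts you rightly cite rather than re-prove, though the step "the image of the unipotent radical of $ P_1 $ in the reductive quotient is trivial" deserves the one-line justification. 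What the paper's route buys is brevity and self-containedness: only the square-root trick already used for the lemma is needed, with no structure theory of $ \sigma $-stable parabolics. What your route buys is a clean isolation of the delicate point: your Levi argument in particular supplies a proof of $ (\nilradical{\lie{P}})^{\sigma} = \nilradical{\lie{p}} $, an equality the paper's proof invokes without comment in its last step; on the other hand you import external results, and you should also record explicitly the (trivial) inclusion $ T^*\dblFV \subset (T^*\bbdblFV)^{\sigma} $, which your write-up leaves implicit.
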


\begin{proof}
It is obvious that $ T^*\dblFV \subset (T^*\bbdblFV)^{\sigma} $.  
Let us prove the reversed inclusion.  
Take $ [\bbg, \bbu] \in \bbG \times_{\bbP} \nilradical{\lie{P}} $.  
Then $ \sigma([\bbg, \bbu]) = [\sigma(\bbg), \sigma(\bbu)] $ is equal to $ [\bbg, \bbu] $ 
if and only if there exists $ \bbp \in \bbP $ such that 
$ \sigma(\bbg) = \bbg \bbp $ and $ \sigma(\bbu) = {\bbp}^{-1} {\bbu} $.
From the first equality, we get $ \bbp = \bbg^{-1} \sigma(\bbg) \in \bbP^{-\sigma} $ 
so that there exists a square root $ f \in \bbP^{-\sigma} $ of $ \bbg^{-1} \sigma(\bbg) $.  
Then $ [\bbg, \bbu] = [\bbg f, f^{-1} \bbu] \in G \times_{P} \nilradical{\lie{p}} $.
In fact, 
$ \sigma(\bbg f) = \sigma(\bbg) f^{-1} = \bbg f $ which implies $ \bbg f \in G $.  
Similarly, 
$ \sigma(f^{-1} \bbu) = f \sigma(\bbu) = f \bbp^{-1} \bbu = f f^{-2} \bbu = f^{-1} \bbu $, 
which proves $ f^{-1} \bbu \in (\nilradical{\lie{P}})^{\sigma} = \nilradical{\lie{p}} $.  

In the same manner, we conclude $ (\bbK \times_{\bbQ} \nilradical{\lie{Q}})^{\sigma} 
= K \times_Q \nilradical{\lie{q}} $, which proves the proposition.
\end{proof}

Let us denote the moment maps by 
\begin{equation}
\mu_{\bbdblFV} : T^*\bbdblFV \to \lie{K}^{\ast} \simeq \lie{K} , 
\qquad
\mu_{\dblFV} : T^*\dblFV \to \lie{k}^{\ast} \simeq \lie{k} .
\end{equation}
Clearly $ \mu_{\bbdblFV} $ commutes with $ \sigma $ and 
$ \mu_{\bbdblFV} \restrict_{T^*\dblFV} = \mu_{\dblFV} $.  
Let $ \conormalvariety_{\bbdblFV} = \mu_{\bbdblFV}^{-1}(0) $ 
and $ \conormalvariety_{\dblFV} = \mu_{\dblFV}^{-1}(0) $ 
be the corresponding conormal varieties
\footnote{
We defined the conormal variety in \S~\ref{S1:Steinberg.theory.for.symmetric.pairs} in a different way, but they coincide.  
See Fact~\ref{fact:conormal.var.is.null.fiber}{}\eqref{fact:conormal.var.is.null.fiber:item:1} and also \cite{Fresse.N.2020}.}.

\begin{theorem}\label{thm:embedding.conormal.bundles}
Assume both conditions 
{\upshape(A)} and {\upshape(B)} in {\upshape\S~\ref{subsec:embedding.orbits}}.
\begin{thmenumerate}
\item\label{thm:embedding.conormal.bundles:item:01}
It holds that $ (\conormalvariety_{\bbdblFV})^{\sigma} = \conormalvariety_{\dblFV} $.  
\item\label{thm:embedding.conormal.bundles:item:02}
By Theorem~\ref{thm:embedding.orbits}, 
the orbit map $ \iota : \dblFV / K \to \bbdblFV / \bbK $ is injective.  
For $ O \in X/K $, put $ \orbit = \bbK \cdot O = \iota(O) $.  
Then we have $ (T^*_{\orbit}\bbdblFV)^{\sigma} = T^*_O\dblFV $.
\end{thmenumerate}
\end{theorem}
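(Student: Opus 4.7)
The plan is to deduce (i) directly from the preceding proposition together with the $\sigma$-equivariance of $\mu_{\bbdblFV}$, and (ii) from a fiberwise $\pm\sigma$-eigenspace analysis on the conormal bundle; throughout, both conditions (A) and (B) of \S\ref{subsec:embedding.orbits} are in force. For (i), the canonical lift of $\sigma\in\Aut(\bbdblFV)$ to $T^{*}\bbdblFV$ is a symplectomorphism, and since the $\bbK$-action on $\bbdblFV$ satisfies $\sigma(\bbk\cdot\bbx)=\sigma(\bbk)\cdot\sigma(\bbx)$, the moment map $\mu_{\bbdblFV}$ is $\sigma$-equivariant, where $\sigma$ acts on $\lie{K}^{*}\simeq\lie{K}$ through its differential. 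Hence $\conormalvariety_{\bbdblFV}=\mu_{\bbdblFV}^{-1}(0)$ is $\sigma$-stable, and intersecting with $(T^{*}\bbdblFV)^{\sigma}=T^{*}\dblFV$ (the preceding proposition, which uses condition (A)) yields $(\conormalvariety_{\bbdblFV})^{\sigma}=\mu_{\bbdblFV}^{-1}(0)\cap T^{*}\dblFV$. It then remains to identify $\mu_{\bbdblFV}\restrict_{T^{*}\dblFV}$ with $\mu_{\dblFV}$, a routine verification from the defining formula for moment maps on cotangent bundles, using that the invariant bilinear form on $\lie{G}$ is $\sigma$-invariant and restricts to an invariant form on $\lie{g}$.

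For (ii), I would fix $x\in O$, so that by Theorem~\ref{thm:embedding.orbits}(ii) we have $\sigma(x)=x$ and $O=\orbit^{\sigma}$; since $\sigma$ preserves $\orbit=\bbK\cdot x$, the subspace $T_{x}\orbit\subset T_{x}\bbdblFV$ is $\sigma$-stable. The key technical step is the equality $T_{x}O=(T_{x}\orbit)^{\sigma}$, proved via the infinitesimal action $\alpha:\lie{K}\to T_{x}\orbit$, $X\mapsto X\cdot x$, which is surjective and $\sigma$-equivariant (the latter because $\sigma$ fixes $x$): restricting to $\sigma$-fixed points, $\alpha$ produces a surjection $\lie{k}=\lie{K}^{\sigma}\twoheadrightarrow(T_{x}\orbit)^{\sigma}$ whose image coincides with $T_{x}O$ by connectedness of $K$. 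Granted this, the $\sigma$-invariant decompositions $T_{x}\bbdblFV=T_{x}\dblFV\oplus(T_{x}\bbdblFV)^{-\sigma}$ and $T_{x}^{*}\bbdblFV=T_{x}^{*}\dblFV\oplus(T_{x}^{*}\bbdblFV)^{-\sigma}$ (with $T_{x}^{*}\dblFV$ identified with the annihilator of $(T_{x}\bbdblFV)^{-\sigma}$) imply that a $\sigma$-fixed covector $\xi\in T_{x}^{*}\dblFV$ annihilates $T_{x}\orbit=T_{x}O\oplus(T_{x}\orbit)^{-\sigma}$ if and only if it annihilates $T_{x}O$, since $\xi$ automatically vanishes on $(T_{x}\orbit)^{-\sigma}\subset(T_{x}\bbdblFV)^{-\sigma}$. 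Letting $x$ range over $O$ yields $(T_{\orbit}^{*}\bbdblFV)^{\sigma}=T_{O}^{*}\dblFV$.

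The step I expect to require the most care is the identification $T_{x}O=(T_{x}\orbit)^{\sigma}$, which hinges on the surjectivity of $\alpha$ on $\sigma$-fixed points and on the connectedness of $K$. Once this is in hand, the cotangent-bundle identification in (ii) reduces to elementary linear algebra with $\pm\sigma$-eigenspaces, and (i) is essentially immediate from the preceding proposition together with the compatibility of the two moment maps.
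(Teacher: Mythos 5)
Your proposal is correct and follows essentially the same route as the paper: part (i) is the same chain $(\conormalvariety_{\bbdblFV})^{\sigma}=(T^*\bbdblFV)^{\sigma}\cap\mu_{\bbdblFV}^{-1}(0)=T^*\dblFV\cap\mu_{\dblFV}^{-1}(0)=\conormalvariety_{\dblFV}$, and part (ii) is the same reduction to the fiber over a point $x\in O=\orbit^{\sigma}$ followed by $\sigma$-eigenspace linear algebra. The only cosmetic difference is that you establish $T_xO=(T_x\orbit)^{\sigma}$ intrinsically via the $\sigma$-equivariant infinitesimal action and then dualize, whereas the paper works directly with the explicit description of the conormal fiber as $\bigl((\lie{P}_1,\lie{Q}_1)+\Delta\lie{K}\bigr)^{\bot}$ and takes $\sigma$-fixed points there.
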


\begin{proof}
\eqref{thm:embedding.conormal.bundles:item:01}\ 
$ (\conormalvariety_{\bbdblFV})^{\sigma} 
= (T^*\bbdblFV)^{\sigma} \cap \mu_{\bbdblFV}^{-1}(0)
= T^*\dblFV \cap \mu_{\bbdblFV}^{-1}(0)
= T^*\dblFV \cap \mu_{\dblFV}^{-1}(0)
= \conormalvariety_{\dblFV} $.  

\eqref{thm:embedding.conormal.bundles:item:02}\ 
Since $ \orbit^{\sigma} = O $, it suffices to check the equality for the fiber.  
Take $ x \in O $ and write $ x = (P_1, Q_1) = (\bbP_1, \bbQ_1) $ 
(as a point in $ G/P \times K/Q $ embedded into $ \bbG/\bbP \times \bbK/\bbQ $).  
Then the fiber of the conormal bundle at $ x $ is given by 
$ (T^*_{\orbit}\bbdblFV)_x = \bigl( (\lie{P}_1, \lie{Q}_1) + \Delta \lie{K} \bigr)^{\bot} $, 
where $ \Delta \lie{K} $ denotes the diagonal embedding and $ \bot $ refers to the orthogonal 
in the dual space.  
Since all the subspaces which are relevant are $ \sigma $-stable, we conclude 
$ (T^*_{\orbit}\bbdblFV)_x^{\sigma} 
= \bigl( (\lie{P}_1^{\sigma}, \lie{Q}_1^{\sigma}) + \Delta \lie{K}^{\sigma}\bigr)^{\bot} 
= \bigl( (\lie{p}_1, \lie{q}_1) + \Delta \lie{k}\bigr)^{\bot} 
= (T^*_{O}\dblFV)_x
$.  
\end{proof}

\subsection{Compatibility with Steinberg maps}
\label{section:compatibility.Steinberg.maps}
In this section we assume both conditions (A) and (B), so that Theorem \ref{thm:embedding.orbits} provides us with an orbit embedding $\iota:\dblFV / K \hookrightarrow \bbdblFV / \bbK $.

Recall that there are two types of nilpotent cones $ \nilpotents^{\pm\theta} $ in 
$ \lie{k} = \lie{g}^{\theta} $ and $ \lie{s} = \lie{g}^{-\theta} $ respectively.  
The symmetric subgroup $ K $ acts on both nilpotent varieties with finitely many orbits.  

Similarly, we denote the nilpotent variety for $ \lie{G} = \Lie (\bbG) $ 
by $ \Nilpotents = \Nilpotents_{\lie{G}} $ and put 
$ \Nilpotentvark = \Nilpotents \cap \lie{K} $ and 
$ \Nilpotentvars = \Nilpotents \cap \lie{G}^{-\theta} $.  
The symmetric subgroup $ \bbK $ acts on $ \Nilpotents^{\pm\theta} $ with finitely many orbits.

Clearly $ \nilpotents^{\pm\theta} $ are fixed point varieties of 
$ \Nilpotents^{\pm\theta} $ by the involution $ \sigma $.  
Therefore we have natural orbit maps 
$ \iota^{\pm\theta} : \nilpotents^{\pm\theta} / K \to \Nilpotents^{\pm\theta} / \bbK $.  
Namely, for an orbit $ \calorbit \in \nilpotents^{\pm\theta} / K $, 
the respective map is defined by letting
$ \iota^{\pm\theta}(\calorbit) = \bbK \cdot \calorbit $.
The maps $ \iota^{\pm\theta} $ are not injective in general.

Assumption \ref{assumption:exotic.Steinberg.map.to.nilpotents} clearly holds for $\dblFV$ whenever it holds for $\bbdblFV$, and in this case the construction of Section
\ref{S1:Steinberg.theory.for.symmetric.pairs} yields the Steinberg maps $\Phi^{\pm\theta}:\dblFV/K\to \nilpotents^{\pm\theta}/K$ relative to $\dblFV$
and $\bbPhi^{\pm\theta}:\bbdblFV/\bbK \to \Nilpotents^{\pm\theta}/K$ relative to $\bbdblFV$. 
Then it is natural to make the following

\begin{conjecture}\label{conj:question.compatibility}
The diagram below commutes.  
In other words, the orbit embedding $\iota:X/K\hookrightarrow \bbX/\bbK$ is compatible with the Steinberg maps $\Phi^{\pm\theta}$ and $\bbPhi^{\pm\theta}$.
\begin{equation}\label{eq:embedding.and.Steinberg.maps.commute}
\vcenter{
\xymatrix 
{ \dblFV / K \ar@{^{(}->}[d]^\iota \ar[rr]^{\text{\normalsize$\Phi^{\pm\theta}$}} & \qquad \qquad & \nilpotents^{\pm\theta} / K \ar[d]^{\iota^{\pm\theta}} \\
 \bbdblFV / \bbK \ar[rr]^{\text{\normalsize$\bbPhi^{\pm\theta}$}} & \qquad \qquad & \Nilpotents^{\pm\theta} / \bbK} 
}
\end{equation}
\end{conjecture}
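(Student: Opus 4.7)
The plan is to prove the conjecture in two stages: first deduce a closure inclusion by formal functoriality, then upgrade this to an equality of $\bbK$-orbits by producing a common point of the two orbits in question. Fix $O\in \dblFV/K$ and set $\orbit:=\iota(O)=\bbK\cdot O$. The Steinberg maps on $T^*\dblFV$ and $T^*\bbdblFV$, both given by the formulas $x\mapsto x^{\pm\theta}$, agree along the inclusion $T^*\dblFV\subset T^*\bbdblFV$ since the $\theta$-projections commute with $\sigma$; I will denote both maps by $\varphi^{\pm\theta}$. Combining this with $T^*_O\dblFV\subset T^*_\orbit\bbdblFV$ (Theorem~\ref{thm:embedding.conormal.bundles}), I get $\varphi^{\pm\theta}(T^*_O\dblFV)\subset\varphi^{\pm\theta}(T^*_\orbit\bbdblFV)$; passing to closures yields $\overline{\Phi^{\pm\theta}(O)}\subset\overline{\bbPhi^{\pm\theta}(\orbit)}$, and hence $\iota^{\pm\theta}(\Phi^{\pm\theta}(O))=\bbK\cdot\Phi^{\pm\theta}(O)\subset\overline{\bbPhi^{\pm\theta}(\orbit)}$ by $\bbK$-stability of the right-hand side.

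The second stage proceeds by the following reduction. Since $O\subset\orbit$ is pointwise $\sigma$-fixed and $\sigma(\bbK)=\bbK$, the orbit $\orbit$ and its conormal bundle $T^*_\orbit\bbdblFV$ are $\sigma$-stable; by $\sigma$-equivariance of $\varphi^{\pm\theta}$, the closure $\overline{\bbPhi^{\pm\theta}(\orbit)}$ is $\sigma$-stable, so its unique dense $\bbK$-orbit $\bbPhi^{\pm\theta}(\orbit)$ is $\sigma$-stable as well. Now it suffices to exhibit a single point $y\in \Phi^{\pm\theta}(O)\cap \bbPhi^{\pm\theta}(\orbit)$: indeed if such $y$ exists, then $\Phi^{\pm\theta}(O)=K\cdot y\subset\bbK\cdot y=\bbPhi^{\pm\theta}(\orbit)$, and so $\iota^{\pm\theta}(\Phi^{\pm\theta}(O))=\bbK\cdot\Phi^{\pm\theta}(O)=\bbPhi^{\pm\theta}(\orbit)$. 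Equivalently, writing $U:=(\varphi^{\pm\theta}|_{T^*_\orbit\bbdblFV})^{-1}(\bbPhi^{\pm\theta}(\orbit))\subset T^*_\orbit\bbdblFV$, which is a non-empty, $\sigma$-stable, open dense subset (non-empty because $\bbPhi^{\pm\theta}(\orbit)$ is the dense orbit in the image closure), one must produce a point of $U\cap T^*_O\dblFV$.

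The main obstacle is precisely this final step. In general a non-empty $\sigma$-stable open subset of a $\sigma$-stable irreducible variety need not meet the $\sigma$-fixed subvariety (e.g.\ $\{(x,y)\in\C^2:x\neq 0\}$ under $(x,y)\mapsto(-x,y)$). A natural route is to show that $\bbK\cdot T^*_O\dblFV$ is dense in $T^*_\orbit\bbdblFV$: then it meets the non-empty open subset $U$, and $\bbK$-invariance of $U$ allows one to pull back to a point of $T^*_O\dblFV\cap U$. By an orbit--stabilizer count, density amounts to the dimension identity $\dim\bbK^{-\sigma}-\dim S^{-\sigma}=\dim\bbdblFV-\dim\dblFV$ where $S$ is the generic $\bbK$-stabilizer on $T^*_O\dblFV$; I would try to obtain this from conditions~(A) and~(B) applied to the stabilizer $\bbP_1\cap\bbQ_1$ of a point of $O$ (which controls $S$), together with the $\sigma$-decomposition $\lie{K}=\lie{k}\oplus\lie{K}^{-\sigma}$, in direct analogy with the proof of Theorem~\ref{thm:embedding.orbits}. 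A more geometric alternative is a Cayley-transform construction of a $\sigma$-fixed element in a generic fiber of $\varphi^{\pm\theta}$ over $\bbPhi^{\pm\theta}(\orbit)$, using $(-\sigma)$-square roots inside $\Stab_\bbK$ of a generic point of that orbit, in the spirit of Ohta's technique~\cite{Ohta.2008}. In either route, the heart of the argument is the upgrade of the square-root hypothesis from parabolic subgroups to these finer stabilizers; for a sanity check, the conjecture can be verified directly from the explicit type CI $\subset$ type AIII classification of Section~\ref{section:embedding.typeCI.into.typeAIII}.
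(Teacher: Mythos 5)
What you have is not a proof: the statement is left as an open conjecture in the paper, and your argument stops exactly where the paper's does. Your first stage (the closure inclusion $\iota^{\pm\theta}(\Phi^{\pm\theta}(O))\subset\closure{\bbPhi^{\pm\theta}(\orbit)}$ by functoriality, using $T^*_O\dblFV=(T^*_\orbit\bbdblFV)^{\sigma}$) and your reduction of the equality to producing a point of $U\cap T^*_O\dblFV$, with $U$ a dense, open, $\bbK$- and $\sigma$-stable subset of $T^*_\orbit\bbdblFV$ mapping into $\bbPhi^{\pm\theta}(\orbit)$, reproduce precisely the content of Lemma~\ref{lemma:dense.U.proves.commutativity}. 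That reduction is correct (and you rightly observe, as the paper does, that a $\sigma$-stable dense open subset need not meet the $\sigma$-fixed locus, so functoriality alone cannot finish the argument), but it is not the hard part.

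The genuine gap is the final step, which you acknowledge but do not carry out. The density of $\bbK\cdot T^*_O\dblFV$ in $T^*_\orbit\bbdblFV$, or equivalently your dimension identity involving $\dim\bbK^{-\sigma}-\dim S^{-\sigma}$, is asserted as a route but never established; it does not follow formally from conditions (A) and (B), which concern $(-\sigma)$-square roots in parabolic-type stabilizers of points of $\bbdblFV$, not in the generic stabilizers of points of the conormal bundle that control this count. Likewise the Cayley-transform alternative would require square roots in these finer stabilizers, which is exactly the ``upgrade'' you name but do not prove. Finally, the suggested sanity check in the type CI $\subset$ AIII situation of Section~\ref{section:embedding.typeCI.into.typeAIII} is not a substitute: even there the commutativity of \eqref{eq:embedding.and.Steinberg.maps.commute} is not known in general (the paper reports direct verification only for $n\le 3$). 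So the proposal correctly rederives the known reduction but leaves the conjecture unproved.
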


The functoriality of the constructions is not sufficient to hold 
Conjecture~\ref{conj:question.compatibility} affirmatively. Let $O\subset \dblFV$ be a $K$-orbit
and let $\orbit:=\iota(O)$ be the corresponding $\bbK$-orbit in $\bbdblFV$.
By definition of the maps $\bbPhi^{\pm\theta}$, there is a dense open subset
$\bbU$ of the conormal bundle $T^*_\orbit\bbdblFV$ such that
\[
\bbu\in\bbU
\quad
\Longrightarrow
\quad
(\bbu^\theta\in \bbPhi^\theta(\orbit)\quad\mbox{and}\quad \bbu^{-\theta}\in \bbPhi^{-\theta}(\orbit)).
\]
We may assume that $\bbU \subset T^*_\orbit\bbdblFV$ is $\bbK$-stable and $\sigma$-stable.  
Note that $T^*_O\dblFV=(T^*_\orbit\bbdblFV)^\sigma$ by Theorem \ref{thm:embedding.conormal.bundles}.

\begin{lemma}\label{lemma:dense.U.proves.commutativity}
The equality 
$\bbPhi^{\pm\theta}(\iota(O))=\iota^{\pm\theta}(\Phi^{\pm\theta}(O))$ 
holds (and hence the diagram \eqref{eq:embedding.and.Steinberg.maps.commute} commutes) if $\bbU$ intersects $T^*_O\dblFV$.
\end{lemma}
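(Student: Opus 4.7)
The plan is to exhibit a single cotangent vector $u \in T^*_O\dblFV \cap \bbU$ whose $\pm\theta$-projections land simultaneously in the small $K$-orbit $\Phi^{\pm\theta}(O)$ and in the large $\bbK$-orbit $\bbPhi^{\pm\theta}(\iota(O))$; having such a point forces these two $\bbK$-orbits to coincide, which is exactly the conclusion. The heart of the argument is a density argument inside the small conormal bundle.

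First, $T^*_O\dblFV$ is irreducible, being a vector bundle over the homogeneous $K$-space $O$ (we use that $K$ is connected). By the very construction of the Steinberg maps attached to $\dblFV$, the closure of $\varphi^{\pm\theta}(T^*_O\dblFV)$ equals $\closure{\Phi^{\pm\theta}(O)}$, so for each sign the preimage of the open orbit $\Phi^{\pm\theta}(O)$ under the continuous map $\varphi^{\pm\theta}$ is open and non-empty, hence dense. Intersecting these two preimages yields a dense open subset
\[
U := \{u \in T^*_O\dblFV \mid \varphi^{\theta}(u) \in \Phi^{\theta}(O) \text{ and } \varphi^{-\theta}(u) \in \Phi^{-\theta}(O)\} \subset T^*_O\dblFV.
\]

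Next, by Theorem~\ref{thm:embedding.conormal.bundles}, $T^*_O\dblFV$ coincides with the $\sigma$-fixed locus $(T^*_\orbit\bbdblFV)^\sigma$. Since $\bbU$ is open in $T^*_\orbit\bbdblFV$ and, by hypothesis, meets $T^*_O\dblFV$, the trace $\bbU \cap T^*_O\dblFV$ is a non-empty open, hence dense, subset of the irreducible variety $T^*_O\dblFV$. Consequently the triple intersection $U \cap \bbU \cap T^*_O\dblFV$ is dense; in particular it is non-empty, so we may pick a point $u$ in it. From $u \in U$ we get $u^{\pm\theta} \in \Phi^{\pm\theta}(O)$, and from $u \in \bbU$ we get $u^{\pm\theta} \in \bbPhi^{\pm\theta}(\iota(O))$; here one uses that the maps $\varphi^{\pm\theta}$ for $\dblFV$ agree with the restrictions of the ambient $\varphi^{\pm\theta}$ to $T^*\dblFV = (T^*\bbdblFV)^\sigma$, which is immediate from the compatibility of the decompositions $\lie{G} = \lie{K} \oplus \lie{G}^{-\theta}$ and $\lie{g} = \lie{k} \oplus \lie{s}$.

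To conclude, $\bbPhi^{\pm\theta}(\iota(O))$ is a single $\bbK$-orbit containing a point of the $K$-orbit $\Phi^{\pm\theta}(O)$; by $\bbK$-stability it therefore contains the entire $\bbK$-saturation $\iota^{\pm\theta}(\Phi^{\pm\theta}(O)) = \bbK \cdot \Phi^{\pm\theta}(O)$, and since the latter is itself a single $\bbK$-orbit the inclusion is an equality, as required. The only genuine subtlety in the whole argument is the non-emptiness of the triple intersection $U \cap \bbU \cap T^*_O\dblFV$, which is exactly what the hypothesis $\bbU \cap T^*_O\dblFV \neq \emptyset$ is designed to secure once one observes the irreducibility of the small conormal bundle.
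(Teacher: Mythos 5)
Your proposal is correct and matches the argument the paper intends: the lemma is stated there without proof, as an immediate consequence of the preceding setup, and your fleshed-out version (irreducibility of $T^*_O\dblFV$, density of the locus mapping into $\Phi^{\pm\theta}(O)$, density of $\bbU\cap T^*_O\dblFV$ once it is nonempty, then a common point forcing the two $\bbK$-orbits $\bbPhi^{\pm\theta}(\iota(O))$ and $\iota^{\pm\theta}(\Phi^{\pm\theta}(O))$ to coincide) is exactly the intended reasoning, including the needed compatibility of $\varphi^{\pm\theta}$ with the embedding $T^*\dblFV=(T^*\bbdblFV)^{\sigma}$.
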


However, in general, it seems difficult to check the condition for $ \bbU $ in the above lemma.  
In the next section, we will discuss an explicit embedding of type CI into type AIII 
(see \eqref{eq:bbK.G.K.typeCI.embedding} below).  
Yet it is still difficult to prove the commutativity of the Steinberg maps in full generality.  
Direct calculations tell us that up to $ n = 3 $ there always exist such $ \bbU $ in 
Lemma \ref{lemma:dense.U.proves.commutativity}, so that the commutativity holds.

\section{Steinberg theory for type CI embedded into type AIII}\label{section:embedding.typeCI.into.typeAIII}

In this section, we apply the theory of embedding to the case of type CI inside type AIII.  
This section 
serves as a meaningful example of the embedding theory given in 
\S~\ref{sec:abstract.embedding.theory}.

Let us begin with 
$ \bbG = \GL_{2n} $ and 
two commuting involutions $ \sigma, \theta \in \Aut \bbG $ defined by 
\begin{align*}
\theta(g) = I_{n,n}^{-1}\, g\, I_{n,n} & & I_{n,n} = \diag (1_n, -1_n) ,
\\
\sigma(g) = J_n^{-1}\,\transpose{g}^{-1} J_n & & J_n = \mattwo{}{-1_n}{1_n}{} .
\end{align*}
Then 
\begin{equation}\label{eq:bbK.G.K.typeCI.embedding}
\bbK = \bbG^{\theta} = \GL_n \times \GL_n , \quad 
G = \bbG^{\sigma} = \Sp_{2n} , \quad
K = G^{\theta} \simeq \GL_n
\end{equation}
are all connected.  
We use $ J_n $ to define a symplectic form on $ V = \C^{2n} $ by 
$ (u, v) = \transpose{u} J_n v $.  
Take $ Q = B = B_n^+ $ in $ K = \GL_n $ as a Borel subgroup and let
$ P = P_S $ be the Siegel parabolic subgroup of $ G = \Sp_{2n} $ 
which stabilizes the Lagrangian subspace 
$ V^+ = \langle \eb_1, \eb_2, \dots, \eb_n \rangle \subset V $, 
where $ \eb_k $ denotes the $ k $-th elementary basis vector.  
Similarly we put $ V^- = \langle \eb_{n + 1}, \eb_{n + 2}, \dots, \eb_{2 n} \rangle $, 
which is also Lagrangian, and 
$ V = V^+ \oplus V^- $ is a polarization stable under $ \bbK $. 

We take 
$ \bbBK = B_n^+ \times B_n^- $ as a $ \sigma $-stable Borel subgroup of $ \bbK $ and 
put $ \bbP = \Stab_{\bbG}(V^+) $ the stabilizer of $ V^+ $ in 
$ \bbG = \GL_{2n} $.  It is easy to see that $ \bbP $ is a $ \sigma $-stable parabolic subgroup.
Thus our double flag varieties are 
\begin{equation}\label{eq:dblflag.Sp2n.GLn}
\dblFV = G/P \times K/B = \Sp_{2n}/P_S \times \GL_n/B_n^+ \simeq \LGrass(\C^{2n}) \times \FlagVar_n , 
\end{equation}
and 
\begin{equation}\label{eq:dblflag.GL2n.GLnGLn}
\begin{aligned}[t]
\bbdblFV &= \bbG/\bbP \times \bbK/\bbBK = \GL_{2n}/P_{(n,n)} \times \GL_n/B_n^+ \times \GL_n/B_n^- 
\\
&\simeq \Grass_n(\C^{2n}) \times \FlagVar_n \times \FlagVar_n ,
\end{aligned}
\end{equation}
where $ \LGrass(V) $ denotes the Lagrangian Grassmannian, the variety of all the Lagrangian subspaces 
in the symplectic vector space $ V = \C^{2n} $ with the symplectic form defined by $ J_n $, 
and $ \FlagVar_n $ is the variety of complete flags in $ \C^n $.  
We use the notation $ P_{(n,n)} $ for the parabolic subgroup in $ \GL_{2n} $ determined by 
the partition $ (n, n) $ of $ 2n $.

\subsection{Assumptions (A) and (B)}

In this setting, 
Assumptions (A) and (B) in \S~\ref{subsec:embedding.orbits} are satisfied as we will see below.  
So the whole theory of embedding works well.  
Note that in this case 
$ \bbdblFV $ has finitely many $ \bbK $-orbits (see \cite{Fresse.N.2020}), 
hence we conclude that $ \dblFV $ has also finitely many $ K $-orbits, 
although the finiteness of orbits is already known by a general classification theory 
\cite{HNOO.2013}.

First, let us check the condition (A).

\begin{lemma}\label{lemma:typeC.condition(A)}
In the setting above, 
$ \bbBK \subset \bbK $ and $ \bbP \subset \bbG $ admit 
$ (-\sigma) $-square roots.
\end{lemma}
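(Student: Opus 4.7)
The plan is to translate the claim into explicit linear algebra, by first parametrizing the fixed sets $\bbBK^{-\sigma}$ and $\bbP^{-\sigma}$, and then constructing square roots element by element via a Levi-type decomposition combined with a careful choice of the square roots of diagonal entries (respectively eigenvalues).

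First I will compute $\sigma(g)$ for $g$ in $\bbBK$ and in $\bbP$ using $\sigma(g) = J_n^{-1}\transpose{g}^{-1}J_n$, and then solve the defining equation $\sigma(h) = h^{-1}$. A direct $2 \times 2$ block-matrix calculation will give
\[
\bbBK^{-\sigma} = \{\diag(A, \transpose{A}) : A \in B_n^+\},
\qquad
\bbP^{-\sigma} = \left\{\mattwo{A}{B}{0}{\transpose{A}} : A \in \GL_n,\ \transpose{B} = -B\right\}.
\]
Because the map $f \mapsto f^2$ preserves these block forms, producing $(-\sigma)$-square roots reduces in the $\bbBK$ case to finding, for every $A \in B_n^+$, some $F \in B_n^+$ with $F^2 = A$ (then $f := \diag(F, \transpose{F})$ does the job); and in the $\bbP$ case to finding, for every $(A, B)$ with $A \in \GL_n$ and $B$ skew-symmetric, some $F \in \GL_n$ with $F^2 = A$ together with a skew-symmetric $C$ solving the Sylvester-type equation $FC + C \transpose{F} = B$ (then $f := \mattwo{F}{C}{0}{\transpose{F}}$ does the job).

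For $\bbBK$ I will use the Levi decomposition $B_n^+ = T_n \ltimes U_n^+$ and write $A = DU$ with $D = \diag(d_1, \dots, d_n)$ and $U$ unipotent upper triangular. I will choose square roots $d_i'$ of the $d_i$ consistently, meaning that equal $d_i$'s receive equal $d_i'$'s, so that $d_i' + d_j' \neq 0$ for all $i, j$ (since $d_i' = -d_j'$ would force $d_i = d_j$, hence $d_i' = d_j'$, hence $d_j' = 0$, absurd). Setting $D' := \diag(d_i')$ and writing the sought root as $F = D'U'$, the equation $F^2 = A$ reduces to $(D'^{-1} U' D')\,U' = U$; expanded in the entries $v_{ij}$ of $U'$ this is a triangular system whose leading coefficients are the nonzero scalars $1 + d_j'/d_i'$, and it is solved recursively in order of increasing $j - i$. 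For $\bbP$ I will first pick $F \in \GL_n$ with $F^2 = A$ under the analogous consistent choice of eigenvalue square roots, so that no eigenvalue of $F$ is the negative of another. The Sylvester operator $L : \Mat_n \to \Mat_n$, $C \mapsto FC + C \transpose{F}$, then has spectrum $\{\lambda_i + \lambda_j\}$ and is invertible; a short transposition check shows that $L$ preserves the subspace of skew-symmetric matrices, so its restriction there is bijective and the equation $L(C) = B$ admits a unique skew-symmetric solution.

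The only delicate point is the consistent choice of square roots: a naive arbitrary choice may produce an $F$ with $\lambda_i = -\lambda_j$ for some pair, which would make the shifted adjoint $1 + \Ad(D'^{-1})$ on $\lie{n}^+$ (in the $\bbBK$ case) or the Sylvester operator $L$ (in the $\bbP$ case) fail to be invertible, blocking the construction. What saves the day is that $\lambda_i = -\lambda_j$ forces $\lambda_i^2 = \lambda_j^2$, which means the underlying eigenvalue of $A$ is repeated; within the Jordan decomposition one is always free to attach the same square root to every Jordan block sharing a given eigenvalue, so this obstruction can systematically be bypassed and the remainder is routine matrix algebra.
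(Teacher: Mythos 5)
Your proof is correct, but it takes a genuinely different route from the paper. You verify the claim by explicitly parametrizing the fixed sets (your block computations of $\bbBK^{-\sigma}=\{\diag(A,\transpose{A}): A\in B_n^+\}$ and $\bbP^{-\sigma}=\bigl\{\mattwo{A}{B}{0}{\transpose{A}}: A\in\GL_n,\ \transpose{B}=-B\bigr\}$ are accurate for $\sigma(g)=J_n^{-1}\transpose{g}^{-1}J_n$) and then building a root by hand: a triangular recursion inside $B_n^+$ with leading coefficients $1+d_j'/d_i'$, and in the parabolic case a square root $F$ of $A$ plus a skew-symmetric solution of the Sylvester equation $FC+C\transpose{F}=B$, both made possible by the consistent choice of eigenvalue square roots (so that no two eigenvalues of $F$ sum to zero). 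The paper instead follows Ohta's argument: for $g\in\bbP^{-\sigma}$ the condition says $g$ is fixed by the anti-automorphism $\tau(A)=J_n^{-1}\transpose{A}J_n$ of $\Mat_{2n}$, and one invokes the standard fact that an invertible matrix has a square root which is a \emph{polynomial} in $g$; such a root is then automatically $\tau$-fixed and stabilizes every $g$-stable subspace (the space $V^+$, the flags, the polarization), hence lies in $\bbP^{-\sigma}$, resp.\ $\bbBK^{-\sigma}$. The polynomial trick absorbs in one stroke exactly the point you must handle carefully (the consistent choice of square roots per eigenvalue), and it is more uniform: it applies verbatim to any $\sigma$-stable parabolic and to intersections such as $\bbP_1\cap(\bbBK)_1$, which is how the paper then gets condition (B) and the subsequent Remark; your computation, on the other hand, has the merit of giving an explicit description of $\bbP^{-\sigma}$ and $\bbBK^{-\sigma}$ and of the root itself. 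One cosmetic caution: the phrase ``shifted adjoint $1+\Ad(D'^{-1})$'' slightly misdescribes your recursion, since the equation $(D'^{-1}U'D')U'=U$ is quadratic in $U'$; what you actually use, and what is correct, is that the system is triangular in the entries $v_{ij}$ ordered by $j-i$ with invertible leading coefficients $1+d_j'/d_i'$. Also note that the torus of $B_n^+$ should not be denoted $T_n$ in this paper, where that symbol is reserved for partial permutation matrices.
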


\begin{proof}
We follow the strategy of Ohta \cite{Ohta.2008} (the proof of Theorem 1 there).

Take $ g \in \bbP^{-\sigma} $.  
Then $ \sigma(g) = J_n^{-1} \transpose{g}^{-1} J_n = g^{-1} $ and 
thus we get $ J_n^{-1} \transpose{g} J_n = g $.  
The expression of the left-hand side can be extended into a mapping on the whole matrix algebra $ \Mat_{2n} $, 
which we denote by $ \tau $ temporarily. 
So we put 
$ \tau(A) = J_n^{-1} \transpose{A} J_n $ for $ A \in \Mat_{2n} $, 
which is an anti-automorphism of $ \Mat_{2n} $.  

For the above chosen $ g \in \bbP^{-\sigma} $, 
a standard argument of linear algebra produces a polynomial 
$ f(T) \in \C[T] $ such that $ f(g)^2 = g $.
Since $ g $ is invertible, $ f(g) $ is also invertible and 
$ f(g) \in \GL_{2n} = \bbG $.  
Since $ \tau(f(g)) = f(\tau(g)) = f(g) $, 
this means $ f(g) \in \bbG^{-\sigma} $.  

Note that $ g \in \bbP $ stabilizes $ V^+ $.  
This forces $ f(g) $ to stabilize $ V^+ $ also, hence $ f(g) \in \bbP^{-\sigma} $.  
This proves that $ \bbP $ admits $ (-\sigma) $-square roots.

Using the fact that $ \bbBK $ is the stabilizer in $ \bbK $ of a complete flag in $ V^{\pm} $, 
we can prove that $ \bbBK $ admits $ (-\sigma) $-square roots in the same way.  
However, note that $ \bbK $ is the stabilizer of the polar decomposition 
$ V^+ \oplus V^- $.
\end{proof}

\begin{remark}
The same argument proves that \emph{any} $ \sigma $-stable parabolic subgroup in $ \bbG $ or  
$ \bbK $ admits $ (-\sigma) $-square roots (cf.~Corollary~\ref{cor:embedding.K.orbits.in.flag.variety}).
\end{remark}

To prove the condition (B), 
we will consider $ \bbP_1 \cap (\bbBK)_1 $ for an arbitrary $ \sigma $-stable 
parabolic subgroup $ \bbP_1 \subset \bbG $ and a Borel subgroup $ (\bbBK)_1 \subset \bbK $.  
An element $ g \in \bbP_1 \cap (\bbBK)_1 $ can be characterized by the property 
that it stabilizes various subspaces (and also polar decomposition) of $ V $.  
Thus the literary same arguments in Lemma~\ref{lemma:typeC.condition(A)} can apply, which proves (B).

The conditions (A) and (B) imply the following theorem.

\begin{theorem}\label{thm:abstract.embedding.typeCI.to.typeAIII}
Let $ \dblFV $ and $ \bbdblFV $ be the double flag varieties defined in 
\eqref{eq:dblflag.Sp2n.GLn} and 
\eqref{eq:dblflag.GL2n.GLnGLn} respectively.  
Then the orbit map $ \dblFV / K \to \bbdblFV / \bbK $ is injective, i.e., 
for any $ \bbK $-orbit $ \orbit $ in $ \bbdblFV $, 
the intersection $ X \cap \orbit $ is either empty or a single $ K $-orbit.
\end{theorem}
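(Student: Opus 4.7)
The plan is to deduce this theorem directly from the abstract embedding result, Theorem~\ref{thm:embedding.orbits}, applied to the setup $(\bbG,\bbK,G,K) = (\GL_{2n}, \GL_n \times \GL_n, \Sp_{2n}, \GL_n)$ of \eqref{eq:bbK.G.K.typeCI.embedding}, with $\bbP = \Stab_{\bbG}(V^+)$, $\bbBK = B_n^+ \times B_n^-$ and the commuting involutions $\theta$ and $\sigma$. Once conditions \textup{(A)} and \textup{(B)} of \S\ref{subsec:embedding.orbits} are verified for these choices, the conclusion is immediate from that theorem. Condition~\textup{(A)} has already been established in Lemma~\ref{lemma:typeC.condition(A)}, so the only remaining task is to prove condition~\textup{(B)}.

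For \textup{(B)}, I would imitate the polynomial-square-root argument of Lemma~\ref{lemma:typeC.condition(A)}. Let $\bbP_1 \subset \bbG$ and $\bbQ_1 \subset \bbK$ be $\sigma$-stable parabolic subgroups conjugate to $\bbP$ and $\bbBK$ respectively. Since $\bbP_1$ is $\sigma$-stable, it is the stabilizer of some Lagrangian subspace $W \subset V$; since $\bbQ_1 \subset \bbK$ is $\sigma$-stable and conjugate to $\bbBK$, it is the stabilizer in $\bbK$ of a pair of complete flags $\calF^+, \calF^-$ in the two pieces of a $\bbK$-adapted polarization, which are interchanged by $\sigma$ up to the operation $\tau(A) = J_n^{-1}\transpose{A}J_n$. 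Consequently $\bbP_1 \cap \bbQ_1$ is the joint stabilizer in $\bbG$ of the finite list of subspaces $\{W, \calF^+, \calF^-\}$. Now take $g \in (\bbP_1 \cap \bbQ_1)^{-\sigma}$. As in Lemma~\ref{lemma:typeC.condition(A)}, since $g$ is invertible, one can find a polynomial $f(T) \in \C[T]$ with $f(g)^2 = g$. Any polynomial in $g$ stabilizes the same invariant subspaces as $g$, so $f(g) \in \bbP_1 \cap \bbQ_1$. Moreover, $\tau$ is an anti-automorphism of $\Mat_{2n}$ commuting with evaluation of polynomials, so the relation $\tau(g) = g$ (which encodes $\sigma(g) = g^{-1}$) gives $\tau(f(g)) = f(\tau(g)) = f(g)$, i.e., $f(g) \in \bbG^{-\sigma}$. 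Therefore $f(g) \in (\bbP_1 \cap \bbQ_1)^{-\sigma}$ is the desired square root.

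The main conceptual point, and the place where care is required, is the identification of $\bbP_1 \cap \bbQ_1$ as a joint stabilizer of an explicit list of subspaces compatible with $\sigma$; once that is granted, the square-root construction is purely formal because polynomial evaluation preserves all invariant subspaces simultaneously. There is no analytic or case-by-case obstruction, in contrast to the situation one would face if the stabilized structures were not linear. I do not expect any surprises from the finite-dimensional linear algebra involved.

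Combining \textup{(A)} and \textup{(B)} with Theorem~\ref{thm:embedding.orbits} then yields both the natural embedding $\dblFV \hookrightarrow \bbdblFV$ and, as part~(3) of that theorem, the statement that every $\bbK$-orbit $\orbit \subset \bbdblFV$ satisfies $\orbit \cap \dblFV$ is either empty or a single $K$-orbit. This is precisely the assertion of Theorem~\ref{thm:abstract.embedding.typeCI.to.typeAIII}. As a bonus consequence, the finiteness of $\bbdblFV/\bbK$ established in \cite{Fresse.N.2020} transports along $\iota$ to give an independent proof of the finiteness of $\dblFV/K$ asserted in \cite{HNOO.2013}.
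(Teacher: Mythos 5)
Your proposal is correct and follows essentially the same route as the paper: verify (A) via Lemma~\ref{lemma:typeC.condition(A)}, verify (B) by repeating Ohta's polynomial-square-root argument after observing that $\bbP_1\cap\bbQ_1$ is the joint stabilizer of a list of subspaces (so any polynomial in $g$ stays in it), and then invoke Theorem~\ref{thm:embedding.orbits}. The only difference is that you spell out what the paper compresses into the phrase ``the literary same arguments in Lemma~\ref{lemma:typeC.condition(A)} can apply,'' including the (correct, though not strictly needed for the square-root step) observation that $\sigma$-stability of $\bbP_1$ forces the stabilized $n$-plane to be Lagrangian.
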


\subsection{Explicit embedding of orbits in double flag varieties}\label{subsec:explicit.embedding.orbits.in.dblFV.typeCI}

Since we get an abstract embedding theorem for orbits, 
we can use it to classify the orbits in the double flag variety $ \dblFV $ explicitly 
in terms of the parametrization of the orbits in $ \bbdblFV $ 
given in Theorem~\ref{thm:orbits.in.dblFV.typeAIII}.  

First, we clarify the explicit embedding map $ \dblFV \to \bbdblFV $ and the involutive automorphism on $ \bbdblFV $.  
Recall the various identifications (see \S~\ref{subsec:symmetric.pair.typeAIII}):
\begin{equation}\label{eqn:bbdblFV.bbK.various.forms.typeAIII}
\begin{aligned}
\bbdblFV / \bbK &\simeq B_n^+ \times B_n^- \backslash \GL_{2n} / P_{(n,n)} 
\simeq B_n^+ \times B_n^- \backslash \Grass_n(\C^{2n}) 
\\
&\simeq B_n^+ \times B_n^- \backslash \regMat_{2n,n} / \GL_n 
\simeq \regTnxTn / S_n ,
\end{aligned}
\end{equation}
where $ \regMat_{2n,n} $ denotes the set of $ 2n $ by $ n $ matrices of full rank (i.e., rank $ n $).  
Take 
$ \omega = \vectwo{\tau_1}{\tau_2} \in \regTnxTn $ as a representative of 
a $ \bbK $-orbit $ \orbit $, 
and denote $ \orbit = \orbit_{\omega} $.  
Then $ \orbit_{\omega} $ is generated by the point $ (\bbBK, \bbg \bbP) $, 
where 
\begin{equation*}
\bbg = \mattwo{\tau_1}{\xi_1}{\tau_2}{-\xi_2} \in \GL_{2n} = \bbG 
\end{equation*}
for some $ \xi_1, \xi_2 \in \Mat_n $ (we put minus sign in front of $ \xi_2 $ for later convenience).  
In fact, it is easy to see
\begin{equation*}
\bbg \bbP \cdot V^+ = \bbg V^+ = [\omega], 
\end{equation*}
where $ [\omega] $ denotes the image of the matrix $ \omega $, i.e.,
the $ n $-dimensional subspace generated by the column vectors of $ \omega $.  
We need a lemma.

\begin{lemma}\label{lemma:good.choice.of.g.xi1.xi2}
We can choose $ \bbg = \mattwo{\tau_1}{\xi_1}{\tau_2}{-\xi_2} $ in such a way that 
$ \vectwo{\xi_1}{\xi_2} \in \regTnxTn  $ and 
$ \transpose{\xi_1} \tau_1 - \transpose{\xi_2} \tau_2 = 0 $.
\end{lemma}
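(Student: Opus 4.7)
The condition $\transpose{\xi_1}\tau_1 - \transpose{\xi_2}\tau_2 = 0$ is equivalent to $\transpose{\omega_+}\omega = 0$, where $\omega_+ := \vectwo{\xi_1}{-\xi_2}$ denotes the block matrix formed by the last $n$ columns of $\bbg$; it says exactly that each column of $\omega_+$ lies in the orthogonal complement $[\omega]^\perp$ with respect to the standard bilinear form on $\C^{2n}$. Since $\dim [\omega] = \dim [\omega]^\perp = n$, the rank-$n$ condition $\vectwo{\xi_1}{\xi_2} \in \regTnxTn$ together with this orthogonality forces $[\omega_+] = [\omega]^\perp$. Moreover the implicit requirement $\bbg \in \bbG$ amounts to $[\omega] \oplus [\omega_+] = \C^{2n}$, which by dimension count is equivalent to non-degeneracy of the standard bilinear form restricted to $[\omega]$.

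Thanks to Lemma~\ref{lemma:representatives.regTnxTn}, we may replace $\omega$ by its canonical $S_n$-representative, encoded by partitions $[n] = J \sqcup M \sqcup M' = I \sqcup L \sqcup L'$ and a bijection $\sigma \colon J \to I$. In these terms $[\omega]$ has the explicit basis consisting of $\eb_{\sigma(j_k)} + \eb_{n+j_k}$ for $j_k \in J$, of $\eb_{n+m_k}$ for $m_k \in M$, and of $\eb_{\ell'_k}$ for $m'_k \in M'$. A direct computation of the Gram matrix in this basis yields a diagonal matrix (a $2$ on each $J$-diagonal entry and a $1$ elsewhere), hence the bilinear form restricts non-degenerately to $[\omega]$ and $[\omega] \oplus [\omega]^\perp = \C^{2n}$.

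We then exhibit $\omega_+$ by specifying its columns: for $j_k \in J$ take the $j_k$-th column to be $\eb_{\sigma(j_k)} - \eb_{n+j_k}$; after fixing any bijection $\pi \colon M \to L$, for $m_k \in M$ take the $m_k$-th column to be $\eb_{\pi(m_k)}$; and for $m'_k \in M'$ take the $m'_k$-th column to be $-\eb_{n+m'_k}$. Each of these $n$ vectors lies in $[\omega]^\perp$ and they are linearly independent (by separating top and bottom blocks), so they form a basis of $[\omega]^\perp$. Reading off the block structure: $\xi_1$ has a single $1$ at $(\sigma(j_k), j_k)$ for $j_k \in J$ and at $(\pi(m_k), m_k)$ for $m_k \in M$, with row supports $I$ and $L$ disjoint; $\xi_2$ has a single $1$ at $(j_k, j_k)$ for $j_k \in J$ and at $(m'_k, m'_k)$ for $m'_k \in M'$, with row supports $J$ and $M'$ disjoint. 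Hence $\xi_1, \xi_2 \in T_n$, and a further separation of blocks shows that $\vectwo{\xi_1}{\xi_2}$ has full rank $n$, so $\vectwo{\xi_1}{\xi_2} \in \regTnxTn$; the orthogonality and invertibility conditions hold by construction.

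The main subtlety is the sign choice $-\eb_{n+m'_k}$, which is needed so that $\xi_2$ remains a $0/1$ matrix once the minus sign of the block form $\vectwo{\xi_1}{-\xi_2}$ is absorbed. The only potentially delicate structural point is the non-degeneracy of the standard bilinear form on $[\omega]$, which could a priori fail; however the canonical representative of Lemma~\ref{lemma:representatives.regTnxTn} furnishes a cleanly diagonal Gram matrix, settling this at once.
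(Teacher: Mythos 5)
Your proof is correct, and it takes a somewhat different route from the paper's. The paper simply conjugates $\tau_1,\tau_2$ by suitable permutation matrices into block-identity form, writes down $\xi_1,\xi_2$ as companion block matrices, and checks $\transpose{\xi_1}\tau_1 - \transpose{\xi_2}\tau_2 = 0$ by a one-line block computation (the rank condition and invertibility of $\bbg$ being evident from the block shapes). You instead recast the orthogonality condition geometrically, as the requirement that the columns of $\omega_+ = \vectwo{\xi_1}{-\xi_2}$ span $[\omega]^\perp$ for the standard symmetric form, and you isolate the implicit invertibility requirement on $\bbg$ as the statement that the form restricts non-degenerately to $[\omega]$; you then verify this non-degeneracy via the Gram matrix and produce an explicit basis of $[\omega]^\perp$ that has partial-permutation shape. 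The two arguments converge on essentially the same explicit matrices once you fix the canonical $S_n$-representative from Lemma~\ref{lemma:representatives.regTnxTn}, but yours is more conceptual in explaining \emph{why} the desired $\omega_+$ exists and in making the $\bbg\in\GL_{2n}$ condition explicit, at the cost of more length. One small remark: the non-degeneracy on $[\omega]$ does not actually depend on the canonical representative — for any $\omega\in\regTnxTn$ the Gram matrix $\transpose{\omega}\omega = \transpose{\tau_1}\tau_1 + \transpose{\tau_2}\tau_2$ is diagonal with entries $1$ or $2$ (as the paper notes later, in the proof of Proposition~\ref{prop:explicit.sigma.on.bbdblFV.bbK}), so that point is automatic; the canonical representative is only genuinely needed for your explicit construction of the basis of $[\omega]^\perp$.
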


\begin{proof}
Based on the general description of $ \tau_1 $ and $ \tau_2 $ given in Section \ref{subsec:gen.RS.correspondence.typeAIII}, we can find permutation matrices
$ s_1,s_2,s\in S_n $ such that
\begin{equation*}
s_1\tau_1 s = \matthree
{1_r}{0}{0}
{0}{0}{0} 
{0}{0}{1_q}
\;\;
\quad
\text{ and }
\quad
s_2\tau_2 s = \matthree
{1_r}{0}{0}
{0}{1_p}{0}
{0}{0}{0} .
\end{equation*}
Then, choosing
$ \xi_1 $ and $ \xi_2 $ so that
\begin{equation*}
s_1\xi_1 = \matthree
{1_r}{0}{0}
{0}{1_p}{0} 
{0}{0}{0} 
\quad 
\text{ and }
\quad
s_2\xi_2 = \matthree
{1_r}{0}{0}
{0}{0}{0}
{0}{0}{1_q} ,
\end{equation*}
we get 
\begin{equation*}
0 
= \transpose{(s_1\xi_1)} (s_1 \tau_1 s) - \transpose{(s_2\xi_2)} (s_2 \tau_2 s) 
= (\transpose{\xi_1} \tau_1 - \transpose{\xi_2} \tau_2) s , 
\end{equation*}
whence $ \transpose{\xi_1}\tau_1-\transpose{\xi_2}\tau_2=0 $ as desired.
\end{proof}

We define an involution on $ \regTnxTn / S_n \simeq \bbdblFV / \bbK $ by 
$ \sigma(\orbit_{\omega}) = \orbit_{\sigma(\omega)} \; (\omega \in \regTnxTn) $, 
which is denoted by the same letter $ \sigma $ by abuse of notation.  
Note that $ \sigma(\omega) $ is determined only modulo the right multiplication by $ S_n $.  

\begin{proposition}\label{prop:explicit.sigma.on.bbdblFV.bbK}
For $ \bbg = \mattwo{\tau_1}{\xi_1}{\tau_2}{-\xi_2} $ chosen in 
Lemma~\ref{lemma:good.choice.of.g.xi1.xi2} and $ \omega = \vectwo{\tau_1}{\tau_2} \in \regTnxTn $, 
let $ \orbit_{\omega} $ be the $ \bbK $-orbit in $ \bbdblFV $ through the point 
$ (\bbBK, \bbg \bbP) $.  
Then $ \sigma(\orbit_{\omega}) = \orbit_{\sigma(\omega)} $ is given by 
\begin{equation*}
\sigma(\omega) = \sigma\Bigl( \vectwo{\tau_1}{\tau_2} \Bigr)
= \vectwo{\xi_2}{\xi_1} .
\end{equation*}
\end{proposition}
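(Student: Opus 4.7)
The plan is to reduce the computation of $\sigma(\omega)$ to an identification of $\sigma(\bbg)V^+$ as a Lagrangian-style orthogonal of $\bbg V^+$ with respect to the symplectic form $(u,v) = \transpose{u} J_n v$, and then verify that the proposed matrix $\vectwo{\xi_2}{\xi_1}$ spans exactly that orthogonal.

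First, I would translate the action of $\sigma$ on the Grassmannian factor. Since $\sigma(\bbBK) = \bbBK$, the $\sigma$-image of the base point $(\bbBK, \bbg\bbP)$ is $(\bbBK, \sigma(\bbg)\bbP)$, and the corresponding $[\omega]$-representative in $\regTnxTn/S_n$ is the subspace $\sigma(\bbg) V^+ \subset \C^{2n}$. Using $J_n V^+ = V^-$ (direct check from the block form of $J_n$) and the definition $\sigma(\bbg) = J_n^{-1}\transpose{\bbg}^{-1} J_n$, I would write
\begin{equation*}
\sigma(\bbg) V^+ = J_n^{-1} \transpose{\bbg}^{-1} V^-.
\end{equation*}
On the other hand, for $v \in V$ the condition $v \in (\bbg V^+)^{\perp}$ (symplectic orthogonal) unfolds as $\transpose{u}\,\transpose{\bbg} J_n v = 0$ for every $u \in V^+$, i.e.\ $\transpose{\bbg} J_n v \in (V^+)^{\perp_{\mathrm{std}}} = V^-$. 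Thus $(\bbg V^+)^{\perp} = J_n^{-1}\transpose{\bbg}^{-1} V^-$, and so
\begin{equation*}
\sigma(\bbg) V^+ = (\bbg V^+)^{\perp} = [\omega]^{\perp}.
\end{equation*}

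Next, I would verify that the columns of $\vectwo{\xi_2}{\xi_1}$ lie in $[\omega]^{\perp}$. For the $i$-th column $v_i = \vectwo{\xi_2 e_i}{\xi_1 e_i}$ and the $j$-th column $u_j = \vectwo{\tau_1 e_j}{\tau_2 e_j}$ of $\omega$, a direct expansion gives
\begin{equation*}
(u_j, v_i) = \transpose{u_j} J_n v_i = \transpose{e_j}\bigl(\transpose{\tau_2}\xi_2 - \transpose{\tau_1}\xi_1\bigr) e_i,
\end{equation*}
which vanishes thanks to the relation $\transpose{\xi_1}\tau_1 - \transpose{\xi_2}\tau_2 = 0$ produced by Lemma~\ref{lemma:good.choice.of.g.xi1.xi2} (taking transpose).

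To conclude, I would carry out a dimension count: the symplectic orthogonal $[\omega]^{\perp}$ has dimension $2n - n = n$, and since $\vectwo{\xi_1}{\xi_2} \in \regTnxTn$, swapping the two $n\times n$ row-blocks gives $\vectwo{\xi_2}{\xi_1}$, which still has rank $n$. Therefore the column span of $\vectwo{\xi_2}{\xi_1}$ coincides with $[\omega]^{\perp} = \sigma(\bbg) V^+$. Under the parametrization \eqref{eqn:bbdblFV.bbK.various.forms.typeAIII}, this identifies the $\bbK$-orbit $\sigma(\orbit_\omega)$ with $\orbit_{\sigma(\omega)}$ for $\sigma(\omega) = \vectwo{\xi_2}{\xi_1}$, modulo the irrelevant right $S_n$-action on $\regTnxTn$. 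The only delicate point is the orthogonality check and the identification $\sigma(\bbg)V^+ = [\omega]^{\perp}$; both are short linear-algebra computations, so I do not anticipate a serious obstacle beyond bookkeeping with the block form of $J_n$.
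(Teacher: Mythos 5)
Your proof is correct, but it takes a different route from the paper's. The paper proves the proposition by explicitly inverting $\bbg$: using the relation $\transpose{\xi_1}\tau_1-\transpose{\xi_2}\tau_2=0$ together with the fact that all four blocks are partial permutations, it shows $\transpose{\bbg}\bbg=\diag(d_1,d_2)$ with $d_1,d_2$ diagonal invertible, so $\transpose{\bbg}^{-1}=\bbg\,\bbd^{-1}$, and then reads off $\sigma(\bbg)=J_n^{-1}\bbg\,\bbd^{-1}J_n=\mattwo{\xi_2}{\tau_2}{\xi_1}{-\tau_1}\cdot\diag(-d_2^{-1},-d_1^{-1})$, whence $\sigma(\bbg)\bbP=\mattwo{\xi_2}{\tau_2}{\xi_1}{-\tau_1}\bbP$. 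You instead avoid computing $\sigma(\bbg)$ at all: you observe that for any invertible $\bbg$ one has $\sigma(\bbg)V^+=J_n^{-1}\transpose{\bbg}^{-1}V^-=(\bbg V^+)^{\perp}$, the symplectic orthogonal of $[\omega]$, and then identify that orthogonal as the column span of $\vectwo{\xi_2}{\xi_1}$ by the pairing computation $\transpose{e_j}(\transpose{\tau_2}\xi_2-\transpose{\tau_1}\xi_1)e_i=0$ (the transposed form of the lemma's relation) plus a rank/dimension count. Your argument is slightly more conceptual and more economical in its hypotheses: it only uses $\rank\vectwo{\xi_1}{\xi_2}=n$ and the orthogonality relation, not the partial-permutation structure that the paper invokes to see that $d_1,d_2$ are diagonal with entries $1$ or $2$; it also explains the appearance of $\vectwo{\xi_2}{\xi_1}$ geometrically (it spans the symplectic orthogonal of $[\omega]$). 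What the paper's computation buys in exchange is the explicit matrix form of $\sigma(\bbg)$ itself, not just of the subspace $\sigma(\bbg)V^+$, which makes the conclusion immediate from the first block column. Both arguments correctly reduce the statement about orbits to the statement about the representative point $(\bbBK,\bbg\bbP)$, so there is no gap; just make sure to keep the remark (which you do) that $\vectwo{\xi_2}{\xi_1}$ still lies in $\regTnxTn$, so that $\orbit_{\sigma(\omega)}$ is well defined up to the right $S_n$-action.
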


\begin{proof}
We have $ \sigma\bigl( (\bbBK, \bbg \bbP) \bigr) = (\bbBK, \sigma(\bbg) \bbP) $ and 
$ \sigma(\bbg) = J_n^{-1} \transpose{\bbg}^{-1} J_n $. 
Let us compute $ \transpose{\bbg}^{-1} $.  We get 
\begin{equation*}
\transpose{\bbg} \bbg 
= \mattwo{\transpose{\tau_1}}{\transpose{\tau_2}}{\transpose{\xi_1}}{-\transpose{\xi_2}} 
  \mattwo{\tau_1}{\xi_1}{\tau_2}{-\xi_2} 
= \mattwo{\transpose{\tau_1} \tau_1 + \transpose{\tau_2} \tau_2}{\transpose{\tau_1} \xi_1 - \transpose{\tau_2} \xi_2}{\transpose{\xi_1} \tau_1 - \transpose{\xi_2} \tau_2}{\transpose{\xi_1} \xi_1 + \transpose{\xi_2} \xi_2}
=: \mattwo{d_1}{0}{0}{d_2} , 
\end{equation*}
by the property $ \transpose{\xi_1} \tau_1 - \transpose{\xi_2} \tau_2 = 0 $.  
An easy calculation tells that $ d_1 = \transpose{\tau_1} \tau_1 + \transpose{\tau_2} \tau_2 $ 
is a diagonal matrix with diagonal entries $ 1 $ or $ 2 $, 
and so is $ d_2 = \transpose{\xi_1} \xi_1 + \transpose{\xi_2} \xi_2 $.  
Thus we get 
$ \transpose{\bbg}^{-1} = \bbg \bbd^{-1} $ with $ \bbd = \diag (d_1, d_2) $.  
From this, we compute 
\begin{equation*}
\sigma(\bbg) = J_n^{-1} \transpose{\bbg}^{-1} J_n 
= J_n^{-1} \bbg \bbd^{-1} J_n 
= \mattwo{\xi_2}{\tau_2}{\xi_1}{-\tau_1}
\cdot \diag(-d_2^{-1}, -d_1^{-1}) , 
\end{equation*}
and get $ \sigma(\bbg) \bbP = \mattwo{\xi_2}{\tau_2}{\xi_1}{-\tau_1} \bbP $.  
This implies $ \sigma\Bigl( \vectwo{\tau_1}{\tau_2} \Bigr)
= \vectwo{\xi_2}{\xi_1} $.
\end{proof}

\begin{theorem}\label{thm:explicit.embedding.typeCI.to.typeAIII}
For $ \omega = \vectwo{\tau_1}{\tau_2} \in \regTnxTn / S_n $, 
let $ \orbit_{\omega} $ be the corresponding $ \bbK $-orbit in $ \bbdblFV $.  
Then the following {\upshape(1)--(4)} are all equivalent.
\begin{thmenumerate}
\item
$ \orbit_{\omega} \cap X \neq \emptyset $ 
(and consequently it is a single $ K $-orbit); 
\item
$ \sigma( \orbit_{\omega} ) = \orbit_{\omega} $, i.e., the $ \bbK $-orbit is $ \sigma $-stable;
\item
$ \transpose{\tau_1} \tau_2 \in \Sym_n $;
\item
$ \transpose{\tau_2} \tau_1 \in \Sym_n $.
\end{thmenumerate}
In particular, the set of $ K $-orbits in the double flag variety 
$ \dblFV = \Sp_{2n} / P_S \times \GL_n / B_n^+ $ 
of type CI is parametrized by $ \regCnxCn / S_n $, where 
\begin{equation*}
\regCnxCn := 
\bigl\{ \omega = \vectwo{\tau_1}{\tau_2} \in \regTnxTn \bigm| 
\transpose{\tau_1} \tau_2 = \transpose{\tau_2} \tau_1 \in \Sym_n \bigr\} .
\end{equation*}
\end{theorem}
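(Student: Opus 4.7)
The plan is to prove the chain $(1) \Rightarrow (2) \Rightarrow (3) \Rightarrow (1)$, observe that $(3) \Leftrightarrow (4)$ is immediate from transposition ($\transpose{(\transpose{\tau_1}\tau_2)} = \transpose{\tau_2}\tau_1$), and then extract the parametrization via Theorem~\ref{thm:abstract.embedding.typeCI.to.typeAIII}. The implication $(1) \Rightarrow (2)$ is straightforward: any $x \in \orbit_{\omega} \cap X$ satisfies $\sigma(x) = x$ because $X = \bbdblFV^{\sigma}$, and since $\bbK$ is $\sigma$-stable, $\sigma(\orbit_{\omega}) = \sigma(\bbK) \cdot \sigma(x) = \bbK \cdot x = \orbit_{\omega}$.

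For $(2) \Rightarrow (3)$, I apply Proposition~\ref{prop:explicit.sigma.on.bbdblFV.bbK}, which gives $\sigma(\orbit_{\omega}) = \orbit_{\sigma(\omega)}$ with $\sigma(\omega) = \vectwo{\xi_2}{\xi_1}$ for the pair $(\xi_1, \xi_2)$ supplied by Lemma~\ref{lemma:good.choice.of.g.xi1.xi2}. Combining with the bijection $\bbdblFV/\bbK \simeq \regTnxTn/S_n$ of Theorem~\ref{thm:orbits.in.dblFV.typeAIII}, condition $(2)$ rephrases as $\sigma(\omega) \in \omega S_n$, that is, as the existence of $s \in S_n$ with $\xi_2 = \tau_1 s$ and $\xi_1 = \tau_2 s$. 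Substituting these into the orthogonality relation $\transpose{\xi_1}\tau_1 = \transpose{\xi_2}\tau_2$ from Lemma~\ref{lemma:good.choice.of.g.xi1.xi2} yields $\transpose{s}(\transpose{\tau_2}\tau_1 - \transpose{\tau_1}\tau_2) = 0$, and cancelling the invertible $\transpose{s}$ gives $\transpose{\tau_1}\tau_2 = \transpose{\tau_2}\tau_1 \in \Sym_n$, which is $(3)$ (and simultaneously $(4)$).

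The implication $(3) \Rightarrow (1)$ is proved by exhibiting an explicit $\sigma$-fixed point in $\orbit_{\omega}$. Under $(3)$, the choice $\xi_1 := \tau_2$, $\xi_2 := \tau_1$ furnishes partial permutations satisfying the full rank and orthogonality requirements of Lemma~\ref{lemma:good.choice.of.g.xi1.xi2}. The associated matrix $\bbg = \mattwo{\tau_1}{\tau_2}{\tau_2}{-\tau_1}$ is invertible because $\transpose{\bbg}\bbg$ is block-diagonal, with both diagonal blocks equal to $\transpose{\tau_1}\tau_1 + \transpose{\tau_2}\tau_2$, itself diagonal with strictly positive entries (positivity follows from the full rank of $\omega$). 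The point $((1_n, 1_n)\bbBK, \bbg\bbP) \in \bbdblFV$ has first coordinate the identity coset, which belongs to $K/B$ via the embedding $kB \mapsto (k, \transpose{k}^{-1})\bbBK$; its second coordinate represents the subspace $[\omega]$, which is Lagrangian precisely by $(3)$. Hence this point lies in $X = \bbdblFV^{\sigma}$. Furthermore, since $\bbg V^+ = [\omega]$, the bijection of Theorem~\ref{thm:orbits.in.dblFV.typeAIII} identifies the $\bbK$-orbit of the point with $\orbit_{\omega}$, so $\orbit_{\omega} \cap X \neq \emptyset$.

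Finally, Theorem~\ref{thm:abstract.embedding.typeCI.to.typeAIII} guarantees that $\orbit_{\omega} \cap X$ is either empty or a single $K$-orbit; together with the equivalence $(1) \Leftrightarrow (3)$ and the manifest $S_n$-invariance of the condition defining $\regCnxCn$ (since $\transpose{(\tau_1 s)}(\tau_2 s) = \transpose{s}\transpose{\tau_1}\tau_2 s$ preserves symmetry), this yields $\dblFV/K \simeq \regCnxCn/S_n$. The most delicate bookkeeping step is $(2) \Rightarrow (3)$, where one must correctly track the permutation $s$ through the orthogonality relation of Lemma~\ref{lemma:good.choice.of.g.xi1.xi2} in order to extract the symmetry of $\transpose{\tau_1}\tau_2$.
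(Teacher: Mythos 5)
Your proof is correct. The paper does not actually spell out a proof of Theorem~\ref{thm:explicit.embedding.typeCI.to.typeAIII}; it states it right after Proposition~\ref{prop:explicit.sigma.on.bbdblFV.bbK} and leaves the argument implicit. Your reconstruction fills in precisely the steps the authors intended, using exactly the ingredients they set up: (1)~$\Rightarrow$~(2) from the identification $\dblFV = \bbdblFV^\sigma$; (2)~$\Rightarrow$~(3) by combining Proposition~\ref{prop:explicit.sigma.on.bbdblFV.bbK} with the orthogonality relation $\transpose{\xi_1}\tau_1 = \transpose{\xi_2}\tau_2$ of Lemma~\ref{lemma:good.choice.of.g.xi1.xi2} and cancelling the invertible $\transpose{s}$; and (3)~$\Rightarrow$~(1) by observing that $\xi_1:=\tau_2$, $\xi_2:=\tau_1$ is an admissible choice when $\transpose{\tau_1}\tau_2$ is symmetric, so that $\bbg=\mattwo{\tau_1}{\tau_2}{\tau_2}{-\tau_1}$ is invertible and yields a $\sigma$-fixed representative of $\orbit_\omega$ (indeed $[\omega]$ is Lagrangian exactly when $\transpose{\omega}J_n\omega = \transpose{\tau_2}\tau_1 - \transpose{\tau_1}\tau_2 = 0$). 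The observation that $\regCnxCn$ is $S_n$-stable is also needed for the parametrization to make sense, and you supply it. One small stylistic point: in (3)~$\Rightarrow$~(1) it would be cleaner to say explicitly that $\sigma(\bbg\bbP)=\bbg\bbP$ because $\sigma$ on $\Grass_n(\C^{2n})$ sends a subspace $W$ to $J_n^{-1}W^{\bot}$, which fixes $W$ precisely when $W$ is Lagrangian; but your phrasing via ``lies in $X=\bbdblFV^\sigma$'' amounts to the same thing.
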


As in the case of $ \bbdblFV / \bbK $ (see \eqref{eqn:bbdblFV.bbK.various.forms.typeAIII}), 
there are natural bijections
\begin{equation}\label{eqn:dblFV.K.various.forms.typeCI}
\begin{aligned}
\dblFV / K &\simeq (\LGrass(\C^{2n}) \times \FlagVar_n) / \GL_n 
\simeq B_n^+ \backslash \Sp_{2n} / P_S
\\
&\simeq B_n^+ \backslash \LGrass(\C^{2n}) 
\simeq B_n^+ \backslash \regCMat_{2n,n} / \GL_n ,
\end{aligned}
\end{equation}
where 
\begin{equation}
\begin{aligned}
\regCMat_{2n,n} 
&= \{ A \in \regMat_{2n,n} \mid \transpose{A} J_n A = 0 \} 
\\
&= \{ A = \vectwo{A_1}{A_2} \in \regMat_{2n,n} \mid \transpose{A_1} A_2 = \transpose{A_2} A_1 
\in \Sym_n \} . 
\end{aligned}
\end{equation}
Note that the actions of $ b \in B_n^+ $ are all defined by the left multiplications by 
$ \mattwo{b}{0}{0}{\transpose{b}^{-1}} $.  
The above theorem tells that all these coset spaces are in bijection with 
$ \regCnxCn / S_n $.  

We get an interesting corollary as a byproduct.

\begin{corollary}
For $ \omega = \vectwo{\tau_1}{\tau_2} \in \regTnxTn $, 
the following are all equivalent.
\begin{thmenumerate}
\item
$ \omega \in \regCnxCn $, i.e., 
$ \transpose{\tau_1} \tau_2 = \transpose{\tau_2} \tau_1 \in \Sym_n $ holds;
\item
$ \transpose{\tau_1} b \tau_2 \in \Sym_n $ for some $ b \in B_n^- $;
\item
$ \transpose{\tau_1} b' \tau_2 \in \Sym_n $ for some $ s \in S_n $ 
and $ b' \in s B_n^- s^{-1} $.
\item
$ \transpose{\tau_1} b' \tau_2 \in \Sym_n $ for any $ s \in S_n $ 
and some $ b' \in s B_n^- s^{-1} $.
\end{thmenumerate}
\end{corollary}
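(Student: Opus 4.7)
The plan is to reduce the entire equivalence to Theorem~\ref{thm:explicit.embedding.typeCI.to.typeAIII}, which already identifies condition (1) with the non-emptiness $\orbit_\omega \cap \dblFV \neq \emptyset$. Four of the implications are immediate: $(1) \Rightarrow (2)$ by taking $b = I_n \in B_n^-$; $(1) \Rightarrow (4)$ by taking $b' = I_n \in s B_n^- s^{-1}$ for every $s \in S_n$; and $(2) \Rightarrow (3)$, $(4) \Rightarrow (3)$ tautologically. What remains is the single implication $(3) \Rightarrow (1)$, and this is where the actual work lies.

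The geometric heart of the matter is that $\transpose{\tau_1} b' \tau_2 \in \Sym_n$ is equivalent to the column span $[\vectwo{\tau_1}{b'\tau_2}] \subset \C^{2n}$ being Lagrangian with respect to $J_n$, as one sees from the bilinear form computation $\transpose{\vectwo{\tau_1}{b'\tau_2}} J_n \vectwo{\tau_1}{b'\tau_2} = \transpose{\tau_1} b' \tau_2 - \transpose{(\transpose{\tau_1} b' \tau_2)}$. In the easy case $s = e$ one has $b' \in B_n^-$ and hence $(I_n, b') \in \bbBK = B_n^+ \times B_n^-$; pairing this Lagrangian with the basepoint flag then produces a $\sigma$-fixed point of $\bbdblFV$ inside $\orbit_\omega$, and Theorem~\ref{thm:explicit.embedding.typeCI.to.typeAIII} immediately delivers (1).

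The main obstacle is the general case $s \neq e$: now $(I_n, b')$ no longer lies in $\bbBK$, so the Lagrangian $[\vectwo{\tau_1}{b'\tau_2}]$ sits in the $\bbK$-orbit of $[\omega]$ on $\Grass_n(\C^{2n})$ but not evidently in the $\bbBK$-orbit, and one cannot directly exhibit a point of $\orbit_\omega \cap \dblFV$. My idea is to conjugate the whole picture by $s$: introduce
\[
\tilde\omega := \diag(s^{-1}, s^{-1})\, \omega = \vectwo{s^{-1}\tau_1}{s^{-1}\tau_2},
\]
which remains in $\regTnxTn$ because left multiplication by the permutation matrix $s^{-1}$ preserves both the partial permutation structure and the rank. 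Writing $b' = s b s^{-1}$ with $b \in B_n^-$ and using $\transpose{(s^{-1})} = s$, a short computation gives $\transpose{\tilde\tau_1}\, b\, \tilde\tau_2 = \transpose{\tau_1}\, b'\, \tau_2 \in \Sym_n$, so condition (2) holds for $\tilde\omega$ with the lower triangular matrix $b$. Applying the easy case to $\tilde\omega$ then yields $\transpose{\tilde\tau_1}\tilde\tau_2 \in \Sym_n$, and the final identity $\transpose{(s^{-1})} s^{-1} = I_n$ simplifies this to $\transpose{\tau_1}\tau_2 \in \Sym_n$, which is (1) for the original $\omega$.
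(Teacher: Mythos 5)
Your proof is correct and follows essentially the same approach as the paper: the Lagrangian reformulation of condition (2), the observation that the resulting Lagrangian lies in the $\bbBK$-orbit of $[\omega]$ so that Theorem~\ref{thm:explicit.embedding.typeCI.to.typeAIII} applies, and the reduction of the $s\neq e$ case to $s=e$ by conjugating with $\diag(s^{-1},s^{-1})$. The only cosmetic differences are that you realize the Lagrangian as $\diag(1_n,b')\,\omega$ while the paper uses $\diag(\transpose{b},1_n)\,\omega$, and that you package the $s$-reduction as a final step rather than as the paper's opening observation that condition (1) is invariant under $\omega\mapsto\diag(s,s)\omega$.
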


\begin{proof}
Obviously $ \vectwo{\tau_1}{\tau_2} \in \regCnxCn $ if and only if 
$ \vectwo{s \tau_1}{s \tau_2} \in \regCnxCn $ for some $ s \in S_n $
if and only if it is so for any $ s \in S_n $.
Therefore it suffices to prove the equivalence of (1) and (2).
Since the implication $ (1) \implies (2) $ is clear, let us prove that (2) implies (1).  
Note that $ \omega = \vectwo{\tau_1}{\tau_2} $ and 
$ \vectwo{\transpose{b} \tau_1}{\tau_2} = \diag(\transpose{b}, 1) \cdot \omega $ 
are in the same orbit $ \orbit_{\omega} $.  
Thus we get 
\begin{equation*}
(2) \iff 
\Bigl[ \vectwo{\transpose{b} \tau_1}{\tau_2} \Bigr] \text{ is Lagrangian} 
\implies \orbit_{\omega} \cap X \neq \emptyset 
\iff (1) .
\end{equation*}
\end{proof}

\subsection{Embedding of nilpotent orbits}\label{subsec:embedding.nilpotent.orbits.typeCI}

The embedding of nilpotent orbits in type CI into those of type AIII is well studied.  
Let us make a quick summary of the known facts.

We use the general notation for nilpotent cones introduced in \S~\ref{section:compatibility.Steinberg.maps}.
In particular we consider the nilpotent cones $ \nilpotents^{\pm\theta}\subset\lie{g}^{\pm\theta} $ 
of the Lie algebra $\lie{g}=\Lie(\Sp_{2n})$.
Note that $\lie{k}:=\lie{g}^\theta$ identifies with $\Lie(\GL_n)$ embedded inside $\lie{g}$.
The symmetric subgroup $K=\GL_n$ acts on $ \nilpotents^{\pm\theta} $ with finitely many orbits.

Similarly, the two nilpotent cones for $ \lie{G} = \Lie (\GL_{2n}) $ 
are denoted by 
$ \Nilpotentvark = \Nilpotents \cap \lie{K} $ and 
$ \Nilpotentvars = \Nilpotents \cap \lie{G}^{-\theta} $.  
The symmetric subgroup $ \bbK = \GL_n \times \GL_n $ acts on $ \Nilpotents^{\pm\theta} $.

Finally recall the natural orbit maps 
$ \iota^{\pm\theta} : \nilpotents^{\pm\theta} / K \to \Nilpotents^{\pm\theta} / \bbK $,
$\calorbit\mapsto \bbK \cdot \calorbit $.

\begin{theorem}[Ohta \cite{Ohta.1986,Ohta.1991.TMJ}]
\label{thm:ohta}
In the above setting, 
the orbit maps $ \iota^{\pm\theta} $ are injective and respect the closure ordering.  
In particular, for any nilpotent $ \bbK $-orbit $ \Orbit \in \Nilpotents^{\pm\theta} / \bbK $, 
the intersection 
$ \Orbit \cap \nilpotents^{\pm\theta} $ is either empty or a single nilpotent $ K $-orbit.  
Moreover, if $ \Orbit $ is $ \sigma $-stable, then 
$ \Orbit \cap \nilpotents^{\pm\theta} $ is nonempty.
\end{theorem}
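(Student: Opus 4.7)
The plan is to adapt the square-root methodology of Section~\ref{sec:abstract.embedding.theory} from orbits in double flag varieties to nilpotent orbits. The key technical input, by analogy with conditions (A)--(B) of \S\ref{subsec:embedding.orbits}, is the property that for every $x \in \nilpotents^{\pm\theta}$ the centralizer $\Stab_{\bbK}(x)$ admits $(-\sigma)$-square roots. Granting this, the injectivity of $\iota^{\pm\theta}$ and the claim that $\Orbit \cap \nilpotents^{\pm\theta}$ is either empty or a single $K$-orbit follow exactly as in Theorem~\ref{thm:embedding.orbits}: given $x, y \in \Orbit \cap \nilpotents^{\pm\theta}$ with $y = \bbk \cdot x$, applying $\sigma$ shows that $\bbk^{-1}\sigma(\bbk) \in \Stab_{\bbK}(x)^{-\sigma}$ is a square $f^{2}$, whence $\bbk f \in \bbK^{\sigma} = K$ and $y = (\bbk f) \cdot x \in K \cdot x$.

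For the closure ordering, the implication $\calorbit_{1} \subseteq \overline{\calorbit_{2}} \Rightarrow \bbK \cdot \calorbit_{1} \subseteq \overline{\bbK \cdot \calorbit_{2}}$ is immediate by continuity. For the converse, I would exploit the finite stratification of $\overline{\bbK \cdot \calorbit_{2}}$ by $\bbK$-orbits, together with the identification $(\overline{\bbK \cdot \calorbit_{2}})^{\sigma} = \overline{\bbK \cdot \calorbit_{2}} \cap \nilpotents^{\pm\theta}$, noting that a non-$\sigma$-stable $\bbK$-orbit has no $\sigma$-fixed point at all. Combined with the single-$K$-orbit property just proved, this presents $(\overline{\bbK \cdot \calorbit_{2}})^{\sigma}$ as a disjoint union of $K$-orbits, one for each $\sigma$-stable $\bbK$-orbit in the closure; an irreducibility and dimension count then isolates $\overline{\calorbit_{2}}$ as the closure of the unique top-dimensional stratum meeting $\calorbit_{2}$.

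Non-emptiness of $\Orbit \cap \nilpotents^{\pm\theta}$ when $\Orbit$ is $\sigma$-stable is a standard Galois-cohomology argument: $\sigma$ descends to an involution on the connected homogeneous space $\Orbit = \bbK / \Stab_{\bbK}(x_{0})$, and by the Borel--Serre vanishing theorem (equivalently, Steinberg's vanishing of $H^{1}$ for connected algebraic groups over an algebraically closed field) the fixed-point set $\Orbit^{\sigma}$ is non-empty; any such fixed point lies in $(\Nilpotents^{\pm\theta})^{\sigma} = \nilpotents^{\pm\theta}$. The main obstacle is thus the $(-\sigma)$-square root property for the centralizers $\Stab_{\bbK}(x)$. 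Unlike parabolic subgroups, these are not stabilizers of flags, so the direct polynomial-functional-calculus trick used in Lemma~\ref{lemma:typeC.condition(A)} does not apply verbatim. A natural way forward is to choose a $\sigma$-stable $\lie{sl}_{2}$-triple containing $x$ via Jacobson--Morozov, to decompose $\Stab_{\bbK}(x) = L \ltimes U$ into its reductive and unipotent parts (both $\sigma$-stable), and to treat the two factors separately: $U$ via the $\sigma$-equivariant exponential map, which reduces the problem to linear algebra on $\Lie(U)^{-\sigma}$, and $L$ via the same polynomial square-root construction as in Lemma~\ref{lemma:typeC.condition(A)} applied to the reductive group $L$.
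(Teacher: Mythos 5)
Before comparing approaches, note that the paper does not prove Theorem~\ref{thm:ohta} at all: it is quoted from Ohta, whose arguments proceed by explicit classification of nilpotent orbits in classical symmetric pairs (ab-diagrams/signed Young diagrams) and explicit descriptions of orbit closures, not by the abstract square-root formalism. Your attempt to redo it abstractly is legitimate, and its first part is essentially sound: in fact, for this pair the $(-\sigma)$-square-root property of $\Stab_{\bbK}(x)$ follows directly from the polynomial trick of Lemma~\ref{lemma:typeC.condition(A)}, since membership in $\Stab_{\bbK}(x)$ is characterized by invertibility, block-diagonality and commutation with $x$, all of which are inherited by polynomials in the element; so the Jacobson--Morozov/Levi detour you propose is unnecessary, and as sketched it is also shaky (in $L\ltimes U$ one cannot simply take square roots of the two components separately and multiply them in a nonabelian group).

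The genuine gaps are in the other two assertions. For the reflection of the closure ordering, your ``irreducibility and dimension count'' assumes precisely what has to be proved: that every $K$-orbit stratum of $\bigl(\closure{\bbK\cdot\calorbit_2}\bigr)^{\sigma}$ lies in the closure of the stratum meeting $\calorbit_2$. The paper itself warns, right after the theorem, that $\closure{\Orbit}\cap\nilpotents^{\pm\theta}$ need not be irreducible, and the equality $\closure{\Orbit}\cap\nilpotents^{\pm\theta}=\closure{\Orbit\cap\nilpotents^{\pm\theta}}$ (when the intersection is nonempty) is itself one of Ohta's nontrivial results; no dimension count is supplied that rules out extra top-dimensional components or strata escaping $\closure{\calorbit_2}$. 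For the non-emptiness of $\Orbit\cap\nilpotents^{\pm\theta}$ when $\Orbit$ is $\sigma$-stable, the appeal to Borel--Serre/Steinberg vanishing is inapplicable: those theorems concern Galois cohomology over fields of cohomological dimension at most one, not the group cohomology $H^1(\langle\sigma\rangle,\bbK)$ of an algebraic involution of a connected complex group, which does not vanish in general (trivial action on $\C^{\times}$ already gives $H^1=\Z/2\Z$); moreover the obstruction to a $\sigma$-fixed point in $\Orbit$ is a class attached to the orbit, involving $\Stab_{\bbK}(x_0)$, not the vanishing of $H^1(\sigma,\bbK)$, so even a vanishing statement for $\bbK$ would not conclude. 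In the paper this non-emptiness is exactly what the two lemmas of \S\ref{subsec:embedding.nilpotent.orbits.typeCI} check by hand via signed Young diagrams; some such explicit (or $\lie{sl}_2$-triple based) argument must replace your cohomological step.
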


Note that the above theorem \emph{does not} imply that
$ \closure{\Orbit} \cap \nilpotents^{\pm\theta} $ is irreducible.  
However, if $ \Orbit \cap \nilpotents^{\pm\theta} \neq \emptyset $, 
then $ \closure{\Orbit} \cap \nilpotents^{\pm\theta} 
= \closure{\Orbit \cap \nilpotents^{\pm\theta}} $ is irreducible, and 
in fact it is the closure of a nilpotent $ K $-orbit.  

Let us give more explicit information on the embedding of nilpotent orbits together with parametrization.  

Since $ \Nilpotentvark $ is a direct product of 
the nilpotent varieties of $ \lie{gl}_n $, 
nilpotent $ \bbK $-orbits in $ \Nilpotentvark $ are classified by 
pairs of partitions of size $ n $.  
Thus we identify $ \Nilpotentvark / \bbK \simeq \partitionsof{n}^2 $, and 
we write $ \Orbit_{(\lambda, \mu)} \; (\lambda, \mu \vdash n) $ 
for the corresponding nilpotent $ \bbK $-orbit.

\begin{lemma}
A nilpotent $ \bbK $-orbit $ \Orbit_{(\lambda, \mu)} \; (\lambda, \mu \vdash n) $ 
is $ \sigma $-stable if and only if $ \lambda = \mu $.  
In that case, the intersection is 
$ \Orbit_{(\lambda, \lambda)} \cap \nilpotentvark = \calorbit_{\lambda} $, 
which is the nilpotent $ K $-orbit with the Jordan normal form corresponding to $ \lambda $.
\end{lemma}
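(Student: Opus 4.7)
The plan is to compute the differential of $\sigma$ on $\lie{K}$ explicitly, read off how $\sigma$ permutes the labels $(\lambda,\mu)$, and then identify the intersection via the standard embedding $\lie{gl}_n \hookrightarrow \lie{sp}_{2n}$.

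First I would differentiate $\sigma(g) = J_n^{-1}\transpose{g}^{-1}J_n$ at the identity to obtain $\sigma(X) = -J_n^{-1}\transpose{X}J_n$ for $X \in \lie{G}$. A direct block calculation using $J_n = \mattwo{}{-1_n}{1_n}{}$ then shows that on $\lie{K}$ we have
\begin{equation*}
\sigma(\diag(A,B)) = \diag(-\transpose{B},\,-\transpose{A}) \qquad (A,B\in\lie{gl}_n).
\end{equation*}
Since $-\transpose{A}$ has the same Jordan type as $A$ for any nilpotent $A$, this immediately gives $\sigma(\Orbit_{(\lambda,\mu)}) = \Orbit_{(\mu,\lambda)}$, whence $\Orbit_{(\lambda,\mu)}$ is $\sigma$-stable if and only if $\lambda=\mu$.

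Next I would identify $\lie{k} = \lie{K}^\sigma$. Setting $\diag(A,B) = \sigma(\diag(A,B))$ in the formula above yields $B = -\transpose{A}$, so
\begin{equation*}
\lie{k} = \{\diag(A,-\transpose{A}) : A \in \lie{gl}_n\},
\end{equation*}
which is the standard embedding of $\lie{gl}_n$ into $\lie{sp}_{2n}$ as $\lie{k}=\Lie(K)$. Moreover, the action of $K=\{\diag(g,\transpose{g}^{-1}) : g\in\GL_n\}$ on such elements by conjugation is precisely $g\cdot A = gAg^{-1}$, so nilpotent $K$-orbits in $\lie{k}$ are classified by Jordan types.

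Finally, assuming $\lambda=\mu$, an element $\diag(A,-\transpose{A})\in\lie{k}$ lies in $\Orbit_{(\lambda,\lambda)}$ exactly when $A$ has Jordan type $\lambda$ (the condition on $-\transpose{A}$ is then automatic). Via the identification above, this is precisely the nilpotent $\GL_n$-orbit $\calorbit_\lambda$ of type $\lambda$ in $\lie{gl}_n$, proving the claimed equality $\Orbit_{(\lambda,\lambda)}\cap\nilpotentvark = \calorbit_\lambda$. There is no serious obstacle here: the entire argument is a bookkeeping exercise with the block structure, and the nonemptiness of the intersection is also guaranteed a priori by Theorem~\ref{thm:ohta}.
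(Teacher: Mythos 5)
Your proof is correct and follows essentially the same route as the paper's (very terse) argument: compute the differential of $\sigma$ on the block-diagonal subalgebra, read off that it swaps the two blocks up to negative transpose, and then observe that $\diag(x,-\transpose{x})$ lies in both $\Orbit_{(\lambda,\lambda)}$ and $\calorbit_\lambda$. You simply spell out the block computation and the identification $\lie{k}\simeq\lie{gl}_n$ explicitly, which is a fine elaboration of what the paper leaves implicit.
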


\begin{proof}
It is easy to see that $ \sigma( \Orbit_{(\lambda, \mu)} ) = \Orbit_{(\mu, \lambda)} $ 
by using the explicit form of the involution $ \sigma $.  
If $ \lambda = \mu $, the element 
$ \diag (x, - \transpose{x}) \in \Orbit_{(\lambda, \lambda)} $ belongs to $ \calorbit_{\lambda} $.
\end{proof}

Next we consider the nilpotent orbits in $ \Nilpotentvars $ and $ \nilpotentvars $.  
The nilpotent $ \bbK $-orbits in $ \Nilpotentvars $ are parametrized by 
signed Young diagrams of signature $ (n, n) $.  
We denote by $ \SYD(n,n) $ the set of all such signed Young diagrams.  
An element $ \Lambda \in \SYD(n,n) $ is a Young diagram of size $ 2n $ and 
each box is filled in by either plus or minus sign.  
In each row, $ (\pm) $-signs are arranged alternatively. The total number of plus signs is $ n $ and that of minus is also $ n $.
Moreover, the signed diagram is defined up to permutation of its rows.
We denote by $ \Orbit_{\Lambda} $ the nilpotent $ \bbK $-orbit corresponding to 
$ \Lambda \in \SYD(n,n) $.  

For example, the set $ \SYD(2,2) $ consists of 
\begin{equation*}
\ytableausetup
{boxsize=.5em}
\ytableausetup
{aligntableaux=top}
\begin{ytableau}
\smallplus & \smallminus & \smallplus & \smallminus 
\end{ytableau}
\;\;
\begin{ytableau}
\smallminus & \smallplus & \smallminus & \smallplus 
\end{ytableau}
\;\;
\begin{ytableau}
\smallplus & \smallminus & \smallplus 
\\
\smallminus
\end{ytableau}
\;\;
\begin{ytableau}
\smallminus & \smallplus & \smallminus 
\\
\smallplus 
\end{ytableau}
\;\;
\begin{ytableau}
\smallplus & \smallminus  
\\
\smallplus & \smallminus
\end{ytableau}
\;\;
\begin{ytableau}
\smallplus & \smallminus  
\\
\smallminus & \smallplus
\end{ytableau}
\;\;
\begin{ytableau}
\smallminus & \smallplus  
\\
\smallminus & \smallplus
\end{ytableau}
\;\;
\begin{ytableau}
\smallplus & \smallminus  
\\
\smallplus 
\\
\smallminus
\end{ytableau}
\;\;
\begin{ytableau}
\smallminus & \smallplus  
\\
\smallplus 
\\
\smallminus
\end{ytableau}
\;\;
\begin{ytableau}
\smallplus  
\\
\smallplus 
\\
\smallminus
\\
\smallminus
\end{ytableau}
\end{equation*}
and there are 10 orbits.  

On the other hand, 
nilpotent orbits in $ \nilpotentvars $ for type CI are parametrized by 
a subset of signed Young diagrams in $ \SYD(n,n) $ 
with the property that 

\medskip

\begin{itemize}
\item[$ (\star) $]
odd rows appear in pairs and their signature must be 
$ 
\ytableausetup
{aligntableaux=center}
\begin{ytableau}
\smallplus & \smallminus & \smallplus & \smallminus & \cdot & \cdot & \cdot & \smallminus & \smallplus
\\
\smallminus & \smallplus & \smallminus & \smallplus & \cdot & \cdot & \cdot & \smallplus & \smallminus
\end{ytableau}
$
\end{itemize}

\medskip

We denote the set of such signed Young diagrams by $ \SYDCI(n,n) $.  
For example, $ \SYDCI(2,2) $ consists of 
\begin{equation*}
\ytableausetup
{boxsize=.5em}
\ytableausetup
{aligntableaux=top}
\begin{ytableau}
\smallplus & \smallminus & \smallplus & \smallminus 
\end{ytableau}
\;\;
\begin{ytableau}
\smallminus & \smallplus & \smallminus & \smallplus 
\end{ytableau}
\;\;
\begin{ytableau}
\smallplus & \smallminus  
\\
\smallplus & \smallminus
\end{ytableau}
\;\;
\begin{ytableau}
\smallplus & \smallminus  
\\
\smallminus & \smallplus
\end{ytableau}
\;\;
\begin{ytableau}
\smallminus & \smallplus  
\\
\smallminus & \smallplus
\end{ytableau}
\;\;
\begin{ytableau}
\smallplus & \smallminus  
\\
\smallplus 
\\
\smallminus
\end{ytableau}
\;\;
\begin{ytableau}
\smallminus & \smallplus  
\\
\smallplus 
\\
\smallminus
\end{ytableau}
\;\;
\begin{ytableau}
\smallplus  
\\
\smallplus 
\\
\smallminus
\\
\smallminus
\end{ytableau}
\end{equation*}
so there are 8 orbits in total.  
We will denote the nilpotent $ K $-orbit corresponding to $ \Lambda \in \SYDCI(n,n) $ by 
$ \calorbit_{\Lambda} $.

\begin{lemma}
A nilpotent $ \bbK $-orbit $ \Orbit_{\Lambda} $ for $ \Lambda \in \SYD(n,n) $ 
is $ \sigma $-stable if and only if 
$ \Lambda $ belongs to $ \SYDCI(n,n) $.  
In that case, $ \Orbit_{\Lambda} \cap \nilpotentvars = \calorbit_{\Lambda} $.  
\end{lemma}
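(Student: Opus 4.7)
The plan is to combine Ohta's theorem (Theorem~\ref{thm:ohta}) with the mutual compatibility of the two signed-Young-diagram parametrizations of nilpotent orbits. By Theorem~\ref{thm:ohta}, a nilpotent $\bbK$-orbit $\Orbit_\Lambda$ is $\sigma$-stable if and only if $\Orbit_\Lambda \cap \nilpotentvars \neq \emptyset$, and in that case the intersection is a single nilpotent $K$-orbit; the converse of the quoted implication is automatic, since any $X \in \Orbit_\Lambda \cap \nilpotentvars$ is fixed by $\sigma$ and hence generates a $\sigma$-stable $\bbK$-orbit. Thus the lemma reduces to showing that the set of $\Lambda \in \SYD(n,n)$ for which $\Orbit_\Lambda \cap \nilpotentvars$ is non-empty coincides with $\SYDCI(n,n)$, and that the resulting $K$-orbit is precisely $\calorbit_\Lambda$.

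The central step is to establish the mutual compatibility: for every $\Lambda \in \SYDCI(n,n)$, the $K$-orbit $\calorbit_\Lambda \subset \nilpotentvars$ is contained in the $\bbK$-orbit $\Orbit_\Lambda \subset \Nilpotentvars$ indexed by the same signed Young diagram. I would prove this by writing a general element of $\lie{s} = \lie{sp}_{2n}^{-\theta}$ as $X = \mattwo{0}{A}{B}{0}$ with $A, B$ symmetric, choosing an explicit standard model $X_\Lambda$ from the combinatorial data of $\Lambda$, and computing its $\bbK$-orbit in AIII. The CI constraint $(\star)$ is natural here: even-length rows carry a self-dual symplectic string, while odd-length rows must appear in pairs with opposite starting signs in order for the symplectic pairing on the associated cyclic subspace to be non-degenerate; in either case the $(V^+, V^-)$-decomposition of $V$ into $X_\Lambda$-cyclic subspaces with alternating signs yields precisely the signed Young diagram $\Lambda$. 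Once $\calorbit_\Lambda \subset \Orbit_\Lambda$ is verified, the $\sigma$-stability of $\Orbit_\Lambda$ is immediate because $\calorbit_\Lambda$ is pointwise fixed by $\sigma$.

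For the converse, suppose $\Orbit_\Lambda$ is $\sigma$-stable. Then by Theorem~\ref{thm:ohta} the intersection $\Orbit_\Lambda \cap \nilpotentvars$ is a single $K$-orbit, say $\calorbit_{\Lambda'}$ with $\Lambda' \in \SYDCI(n,n)$. By the compatibility just established, $\calorbit_{\Lambda'} \subset \Orbit_{\Lambda'}$; but $\calorbit_{\Lambda'}$ also lies in $\Orbit_\Lambda$, so $\Orbit_\Lambda = \Orbit_{\Lambda'}$, whence $\Lambda = \Lambda' \in \SYDCI(n,n)$. Combined with the previous paragraph, this proves both assertions.

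The main obstacle is the compatibility step. The cleanest conceptual way to view it is that both parametrizations are defined via the same object, namely the decomposition of $V = V^+ \oplus V^-$ into $X$-indecomposable signed strings, which is intrinsic to the triple $(V, V^\pm, X)$ and makes no reference to the symplectic form. The more concrete route, which I would follow to produce an actual proof, is to exhibit explicit $\lie{sp}_{2n}$-triples $(H_\Lambda, X_\Lambda, Y_\Lambda)$ with $X_\Lambda \in \lie{s}$ for each $\Lambda \in \SYDCI(n,n)$, using the pairing dictated by $(\star)$ to build the symplectic form row by row, and then read off the signed Young diagram of $X_\Lambda$ in the AIII sense directly from the chosen basis.
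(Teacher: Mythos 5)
Your proposal is logically sound and reaches the right conclusion, but it takes a genuinely different route from the paper. The paper proves the lemma by computing the action of $\sigma$ on the parametrizing set directly: it writes a representative $x=\mattwo{0}{z}{w}{0}\in\Orbit_\Lambda$, observes that $\sigma(x)=\mattwo{0}{\transpose{z}}{\transpose{w}}{0}$, and uses the rank formulas (involving $(zw)^k$, $(wz)^k$, etc.) which control the signed Young diagram to conclude that $\sigma(\Orbit_\Lambda)=\Orbit_{\sigma(\Lambda)}$, where $\sigma(\Lambda)$ is obtained from $\Lambda$ by flipping the $\pm$-signs in the odd-length rows. The $\sigma$-stability criterion and condition $(\star)$ then drop out at once, and the identification of the intersection with $\calorbit_\Lambda$ is treated as a consequence of the fact that both parametrizations read off the same intrinsic data. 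Your route instead invokes Ohta's Theorem~\ref{thm:ohta} to convert $\sigma$-stability into non-emptiness of $\Orbit_\Lambda\cap\nilpotentvars$, and then rests the entire proof on a compatibility statement ($\calorbit_\Lambda\subset\Orbit_{\Lambda}$ for $\Lambda\in\SYDCI(n,n)$) that you plan to verify by exhibiting explicit standard representatives and matching the two signed-Young-diagram constructions. This is correct, and arguably more modular, but it (a) relies on a rather deep black box (Ohta's orbit-embedding theorem, in particular the ``moreover'' clause asserting non-emptiness for $\sigma$-stable orbits), whereas the paper's direct calculation is elementary and self-contained given the rank formulas; and (b) omits the explicit description of $\sigma(\Lambda)$ as the odd-row sign-flip, which is precisely the content that makes condition $(\star)$ appear naturally. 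One minor caution: in your forward direction you should spell out that the intersection $\Orbit_\Lambda\cap\nilpotentvars$ is necessarily some $\calorbit_{\Lambda'}$ with $\Lambda'\in\SYDCI(n,n)$ because nilpotent $K$-orbits in $\nilpotentvars$ for type CI are parametrized by exactly that set, and that distinct $\bbK$-orbits are disjoint to force $\Lambda=\Lambda'$; you do say this, so the argument closes. In short: correct, logically valid, but a different and slightly heavier-machinery route than the paper, and less informative about the actual $\sigma$-action on diagrams.
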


\begin{proof}
In fact, for any $ \Lambda \in \SYD(n,n) $, 
the involution takes $ \Orbit_{\Lambda} $ to 
$ \Orbit_{\sigma(\Lambda)} $, 
where $ \sigma(\Lambda) $ is obtained from 
$ \Lambda $ by exchanging plus and minus signs in the rows of \emph{odd length}.  
This follows from the consideration below.  
Let us take a representative $ x = \mattwo{0}{z}{w}{0} \in \Orbit_{\Lambda} $.  
Then the shape of $ \Lambda $ is given by the sizes of the Jordan cells in the Jordan normal form of $ x $.  
The signatures are controlled by 
\begin{equation*}
\begin{cases}
\rank (z w)^k
\\
\rank (w z)^k
\end{cases}
\quad
\text{ and }
\quad 
\begin{cases}
\rank (z w)^k z
\\
\rank (w z)^k w
\end{cases} 
\quad 
\text{ for $ k \geq 1 $}
\end{equation*}
(see \cite{Collingwood.McGovern.1993} and \cite[\S~10.1]{Fresse.N.2020} for details), for instance
\begin{align*}
\rank (wz)^k - \rank (zw)^k & = \#\{\text{odd rows of length $<2k$ starting with ``$+$''}\} \\
& \qquad
- \#\{\text{odd rows of length $<2k$ starting with ``$-$''}\}.
\end{align*}
By a direct calculation, 
we get 
\begin{equation*}
\sigma(x) = J_n^{-1} (- \transpose{x}) J_n 
= \mattwo{0}{\transpose{z}}{\transpose{w}}{0} .
\end{equation*}
Hence the signature of $ \sigma(\Lambda) $ is controlled by 
\begin{align*}
&
\begin{cases}
\rank (\transpose{z} \transpose{w})^k = \rank (w z)^k
\\
\rank (\transpose{w} \transpose{z})^k = \rank (z w)^k 
\end{cases}
\\
\intertext{ and }
&
\begin{cases}
\rank (\transpose{z} \transpose{w})^k \transpose{z} = \rank z (w z)^k = \rank (z w)^k z
\\
\rank (\transpose{w} \transpose{z})^k \transpose{w} = \rank w (z w)^k = \rank (w z)^k w.
\end{cases}
\end{align*}
Since the roles of $ (w z)^k $ and $ (z w)^k $ are exchanged, it affects the signature in the odd rows, and 
in fact, it exchanges the signature of plus and minus.

Thus, if $ \Orbit_{\Lambda} $ is $ \sigma $-stable, the odd rows must appear in pair and 
their signatures are exchanged.  Thus we have odd rows of the form appearing in ($\star$).  
The rest of the statements are clear.
\end{proof}

\medskip

Theorem \ref{thm:ohta} gives 
the orbit embeddings for nilpotent orbits $\iota^{\pm\theta}:\nilpotents^{\pm\theta}/K\hookrightarrow \Nilpotents^{\pm\theta}/\bbK$ 
in addition to the orbit embedding for orbits in double flag varieties
$\iota:\dblFV/K\hookrightarrow \bbdblFV/\bbK$ 
proved in Theorem \ref{thm:abstract.embedding.typeCI.to.typeAIII}.
However, in spite of the explicit description of all these orbit embeddings, 
the commutativity of the diagram \eqref{eq:embedding.and.Steinberg.maps.commute} is still open.


\bibliographystyle{amsplain}
%
\def\cftil#1{\ifmmode\setbox7\hbox{$\accent"5E#1$}\else
  \setbox7\hbox{\accent"5E#1}\penalty 10000\relax\fi\raise 1\ht7
  \hbox{\lower1.15ex\hbox to 1\wd7{\hss\accent"7E\hss}}\penalty 10000
  \hskip-1\wd7\penalty 10000\box7} \def\cprime{$'$} \def\cprime{$'$}
  \def\Dbar{\leavevmode\lower.6ex\hbox to 0pt{\hskip-.23ex \accent"16\hss}D}
\providecommand{\bysame}{\leavevmode\hbox to3em{\hrulefill}\thinspace}
\providecommand{\MR}{\relax\ifhmode\unskip\space\fi MR }
\providecommand{\MRhref}[2]{%
  \href{http://www.ams.org/mathscinet-getitem?mr=#1}{#2}
}
\providecommand{\href}[2]{#2}
\renewcommand{\MR}[1]{}

\end{document}